\newtheorem{theorem}{Theorem}[section]
\newtheorem{proposition}[theorem]{Proposition}
\newtheorem{corollary}[theorem]{Corollary}
\newtheorem{lemma}[theorem]{Lemma}
\theoremstyle{definition}
\newtheorem{definition}[theorem]{Definition}
\newtheorem{definitions}[theorem]{Definitions}
\newtheorem{properties}[theorem]{Properties}
\newtheorem{remark}[theorem]{Remark}
\newtheorem{example}[theorem]{Example}
\newtheorem{examples}[theorem]{Examples}
\newtheorem{notations}[theorem]{Notations}
\newtheorem{recalls}[theorem]{Recalls}
\newcommand{\bull}{\subsection}
\newcommand{\modcat}[1]{{#1}\mathrm{-mod}}
\newcommand{\cmod}{\modcat{\mathcal C}}
\newcommand{\copmod}{\modcat{\mathcal C^{\text{op}}}}
\newcommand{\Hom}{\textup{Hom}}
\newcommand{\Ch}{\textup{Ch}}
\newcommand{\Ab}{\mathbf{Ab}}
\author[Ramzi Ksouri]{Ramzi Ksouri$^{**}$}
\title[Duality categories]{Duality categories$^*$}
\address{Laboratoire de Math\'ematiques: Alg\`ebre, Topologie, Arithm\'etique et Structures ordonn\'ees "LATAO", D\'epartement de Math\'ematiques, Facult\'e des Sciences de Tunis, Le Campus Universitaire TN-2092, Tunis, Tunisie.
}
\email{ksouri.r@gmail.com}
\thanks{$^*$ The final publication is available at Springer via \url{http://dx.doi.org/10.1007/s10485-015-9397-8}.}
\thanks{$^{**}$ Laboratoire de Math\'ematiques: Alg\`ebre, Topologie, Arithm\'etique et Structures ordonn\'ees "LATAO", D\'epartement de Math\'ematiques, Facult\'e des Sciences de Tunis, Le Campus Universitaire TN-2092, Tunis, Tunisie.
}
\begin{document}\sloppy

\maketitle
\date{}
\begin{abstract}
We define the notion of duality categories as generalization of duality groups. Two examples are treated. The first is the Serre duality in the categories of strict polynomial functors. The second concerns finite complexes. We show in particular that finite Tits buildings are duality categories.
\end{abstract}

\setcounter{tocdepth}{1}
\tableofcontents

\section{Introduction}
A group $G$ is a duality group in the sense of Beiri and Eckmann~\cite{BE} 
if there exist an integer $n$, a right $G$-module $D$ and a natural isomorphism
$$H^k(G,-)\cong H_{n-k}(G,D\otimes -).$$
We generalize this definition of duality group by replacing the group $G$ with a category $\mathcal{C}$.
A $\mathcal{C}$-module is then a functor from $\mathcal{C}$ to the category of abelian groups $\Ab$.
\begin{definition}
A small category $\mathcal{C}$ is said a duality category if there exists a functor $D:\mathcal{C}^{op}\rightarrow \Ab$, an integer $n$ and a natural isomorphism
$$\operatorname{Ext}_{\cmod}^i\left (\underline{\mathbb{Z}},-\right )\cong \operatorname{Tor}^{\cmod}_{n-i}\left (D,-\right )$$
where $\underline{\mathbb{Z}}$ is the constant functor.
\end{definition}
In this article, we outline 
a theory of duality. 
In the case where the small category $\mathcal{C}$ is a finite simplicial complex, we obtain the 
following local condition:
\begin{theorem}
Let $\mathcal{P}$ the partial ordered set of simplices of a finite simplicial complex. 
If there exists an integer $n$ such that for any simplex $x$ of $\mathcal{P}$ 
$$\widetilde{ \operatorname{H}}^{i}(\operatorname{link}_{x})=0 \text{   for   } i\neq n-\dim(x)-1$$
(where $\widetilde{ \operatorname{H}}^{*}$ denotes the reduced cohomology with integer coefficients)
then $\mathcal{P}$ is a duality category.
\end{theorem}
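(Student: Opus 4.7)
The plan is to adapt Bieri--Eckmann to categories: the candidate dualizing functor is $D(x):=\operatorname{Ext}^{n}_{\cmod}(\underline{\mathbb{Z}},\mathbb{Z}[\mathcal{P}(x,-)])$, which is contravariant in $x$ because $\mathbb{Z}[\mathcal{P}(x,-)]$ is. The natural isomorphism will come from dualizing a finite projective resolution of $\underline{\mathbb{Z}}$ in $\cmod$ and interpreting the dual as a projective resolution of $D$ in $\modcat{\mathcal{P}^{op}}$.

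The crux is the identification
$$\operatorname{Ext}^{i}_{\cmod}\bigl(\underline{\mathbb{Z}},\mathbb{Z}[\mathcal{P}(x,-)]\bigr)\;\cong\;\widetilde{\operatorname{H}}^{i-\dim(x)-1}(\operatorname{link}_{x}),$$
which converts the local hypothesis into vanishing of these Ext groups for $i\neq n$. I would prove this by filtering $\mathbb{Z}[\mathcal{P}(x,-)]$ by the subfunctors supported on simplices $w\geq x$ of dimension at least $p+\dim(x)$; the graded pieces are direct sums of the punctual functors $\mathbb{Z}[\{w\}]$ (equal to $\mathbb{Z}$ at $w$ and zero elsewhere). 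A direct cochain computation on the nerve of $\mathcal{P}$ shows
$$\operatorname{Ext}^{q}_{\cmod}\bigl(\underline{\mathbb{Z}},\mathbb{Z}[\{w\}]\bigr)\;\cong\;\widetilde{\operatorname{H}}^{q-1}\bigl(\mathcal{P}_{<w}\bigr),$$
which is concentrated in degree $q=\dim(w)$ since $\mathcal{P}_{<w}$ is the face poset of $\partial w\cong S^{\dim(w)-1}$. The associated spectral sequence therefore collapses onto the line $q=\dim(x)$, and the bijection $w\leftrightarrow\sigma=w\setminus x$ between simplices $w>x$ and simplices $\sigma$ of $\operatorname{link}_{x}$, together with the augmentation column at $w=x$, identifies the $E_{1}$ page with the reduced cochain complex of $\operatorname{link}_{x}$.

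The remaining argument is formal. The hypothesis now gives $\operatorname{Ext}^{i}_{\cmod}(\underline{\mathbb{Z}},\mathbb{Z}[\mathcal{P}(x,-)])=0$ for $i\neq n$, so $D(x)=\widetilde{\operatorname{H}}^{n-\dim(x)-1}(\operatorname{link}_{x})$ is a well-defined functor on $\mathcal{P}^{op}$. The simplicial bar construction on the finite nerve of $\mathcal{P}$ yields a finite projective resolution $P_{\bullet}\twoheadrightarrow\underline{\mathbb{Z}}$ by finite sums of representables. Applying $\operatorname{Hom}_{\cmod}\bigl(-,\mathbb{Z}[\mathcal{P}(-,-)]\bigr)$ produces a cochain complex $Q^{\bullet}$ of representable (hence projective) $\mathcal{P}^{op}$-modules whose cohomology is $D$ concentrated in degree $n$; up to shift, $Q^{\bullet}$ is thus a projective resolution of $D$ in $\modcat{\mathcal{P}^{op}}$. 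The co-Yoneda formula supplies a natural isomorphism $Q^{\bullet}\otimes_{\mathcal{P}}F\cong\operatorname{Hom}_{\cmod}(P_{\bullet},F)$, whence
$$\operatorname{Tor}^{\cmod}_{n-i}(D,F)\;=\;H^{i}\bigl(\operatorname{Hom}_{\cmod}(P_{\bullet},F)\bigr)\;=\;\operatorname{Ext}^{i}_{\cmod}\bigl(\underline{\mathbb{Z}},F\bigr),$$
natural in $F$, as required.

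The main obstacle I anticipate is the Ext identification: checking that the $E_{1}$-differentials of the support spectral sequence genuinely match the simplicial coboundary on $\operatorname{link}_{x}$ is a combinatorial bookkeeping exercise requiring care with signs and with the augmentation column corresponding to $w=x$.
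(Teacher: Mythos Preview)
Your proposal is correct and arrives at the same key identification
\[
\operatorname{Ext}^{i}_{\cmod}\bigl(\underline{\mathbb{Z}},P_{x}^{\mathcal{P}}\bigr)\;\cong\;\widetilde{\operatorname{H}}^{\,i-\dim(x)-1}(\operatorname{link}_{x}),
\]
but the route you take to it differs from the paper's. The paper first reduces $\operatorname{Ext}^{*}(\underline{\mathbb{Z}},P_{x}^{\mathcal{P}})$ to the relative cohomology $\operatorname{H}^{*}(\widetilde{\mathcal{P}}_{x},\mathcal{P}_{x};\underline{\mathbb{Z}})$, observes (via a join decomposition $\widetilde{\mathcal{P}}_{x}\cong\mathcal{P}_{\leq x}\ast\operatorname{link}_{x}$ and $\mathcal{P}_{x}\cong\mathcal{P}_{<x}\ast\operatorname{link}_{x}$) that $\widetilde{\mathcal{P}}_{x}$ is contractible, and then invokes the cohomology-of-a-join formula (K\"unneth for joins) together with the fact that $\mathcal{P}_{<x}$ is a $(\dim x-1)$-sphere. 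You instead filter $P_{x}^{\mathcal{P}}$ by the dimension of the support, identify the graded pieces as sums of skyscraper functors $\mathbb{Z}[\{w\}]$ whose Ext is $\widetilde{\operatorname{H}}^{*-1}(\mathcal{P}_{<w})$ (again a sphere), and read off the answer from the resulting spectral sequence, which is concentrated on a single row and whose $d_{1}$ is the augmented cochain differential of $\operatorname{link}_{x}$. Both methods are valid; the paper's is more geometric and avoids any spectral-sequence bookkeeping, while yours is purely algebraic and makes the link-cochain complex appear directly on the $E_{1}$-page. The formal endgame you spell out (dualize a finite projective resolution, obtain a projective resolution of $D$ in $\modcat{\mathcal{P}^{op}}$, use co-Yoneda to match $\operatorname{Tor}$ with $\operatorname{Ext}$) is exactly the content of the paper's general Theorem~\ref{dualite}, which the paper simply cites; so there is no substantive difference there. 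Your caveat about checking that the $d_{1}$-differentials and signs really give the simplicial coboundary on $\operatorname{link}_{x}$ is the one genuine verification your approach requires that the paper's does not.
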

This theorem allows us to
recover the
classical Poincar\'e duality for triangulated manifolds. It also leads to the following result.
\begin{corollary}
Spherical Tits buildings are duality categories.
\end{corollary}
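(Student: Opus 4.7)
The plan is to verify the hypothesis of the preceding theorem for the poset $\mathcal{P}$ of simplices of a spherical Tits building $\Delta$ of rank $r$, so that dimension $n = r-1$. I would choose $n := \dim(\Delta) = r-1$ and, for each simplex $x$ of $\Delta$, check that the reduced integral cohomology of $\operatorname{link}_{x}$ is concentrated in degree $n-\dim(x)-1$.

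The first key step is structural: for a simplex $x$ of dimension $d$ in a spherical building $\Delta$ of rank $r$, the link $\operatorname{link}_{x}$ is again a spherical Tits building, of rank $r-d-1$. This is a standard fact about buildings (links of simplices in buildings are buildings, and sphericity is preserved). The second key step is homotopical: by the Solomon--Tits theorem, any spherical building of rank $m$ has the homotopy type of a bouquet of spheres of dimension $m-1$ (possibly empty if $m=0$, in which case the convention is that $\operatorname{link}_{x}$ is a finite discrete set of $\geq 2$ points, i.e.\ a wedge of $0$-spheres). Applying this to $\operatorname{link}_{x}$ of rank $r-d-1$, we see that its reduced integral cohomology vanishes outside degree $r-d-2 = (r-1)-d-1 = n-\dim(x)-1$, which is exactly the vanishing condition required by the theorem.

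Combining these two facts, the hypothesis of the theorem on finite simplicial complexes is satisfied, and we conclude that $\mathcal{P}$ is a duality category.

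The substantive content lies entirely in the two inputs from the theory of buildings: the fact that links are again (spherical) buildings, and the Solomon--Tits spherical bouquet theorem. Neither is proven here; once they are invoked, the corollary is an immediate application of the theorem. The main conceptual point of this corollary is therefore to observe that the classical cohomological dimension/Cohen--Macaulay behavior of spherical buildings is exactly the local condition that powers the duality theorem above; no further calculation is needed.
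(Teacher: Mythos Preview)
Your proof is correct and follows essentially the same approach as the paper: both invoke the two standard facts from building theory (links in spherical buildings are again spherical buildings, and the Solomon--Tits theorem giving the wedge-of-spheres homotopy type) to verify the link-cohomology hypothesis of the preceding theorem, then conclude directly. The only cosmetic difference is that the paper phrases things in terms of dimension rather than rank, and explicitly restricts to \emph{finite} spherical buildings so that the type-$FP$ hypothesis of the theorem is met.
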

More generally, if $k$ denotes a commutative unitary ring and $\modcat{k}$ the category of left $k$-modules,
we say that a functor $F:\mathcal{C} \rightarrow \modcat{k}$ is a duality functor 
if there exist an integer $n$, a functor $D^n(F):\mathcal{C}^{op}\rightarrow \modcat{k}$ and a natural isomorphism
\begin{equation}\label{I1}
\operatorname{Ext}_{\cmod}^i\left (F,-\right )\cong \operatorname{Tor}^{\cmod}_{n-i}\left (D^n\left (F\right ),-\right ).
\end{equation}

If $k$ is a field, the isomorphism~(\ref{I1}) is equivalent to the following
$$\operatorname{Ext}_{\cmod}^i\left (F,-\right )'\cong \operatorname{Ext}_{\cmod}^{n-i}\left (-,\text{$D^n\left (F\right )'$}\right )$$
where $D^n(F)'$ denotes the postcomposition of $D^n(F)$ by the duality operation $V\mapsto V':=\Hom_k(V,k)$.
The latter isomorphism  is reminiscent of the classical Serre duality for projective varieties.

Idun Reiten and Michel Van Den Bergh in~\cite{RV} generalized the concept of Serre duality in the following way. 
A $k$-linear category $\mathcal{A}$ satisfies a Serre duality if it has a Serre functor,
\textit{i.e.} an endofunctor
$S:\mathcal{A} \rightarrow\mathcal{A}$ 
such that we have a natural isomorphism
$$Hom_{\mathcal{A}}(a,b)'\cong Hom_{\mathcal{A}}(b,Sa).$$
The duality presented in this paper makes explicit a Serre duality for the category $\mathcal{D}_{\mathrm{perf}}(\modcat{\mathcal{C}})$ of perfect complexes of $\modcat{\mathcal{C}}$.
Let us be clear what this is about.
Let $D$ the functor:
$$\begin{array}{crcl}
D:&(\modcat{\mathcal{C}})^{op}&\rightarrow &\modcat{\mathcal{C}^{op}}\\
& F&\mapsto& DF
\end{array}$$
defined for an object $x$ of $\mathcal{C}$ by
$$DF(x)=\Hom_{\modcat{\mathcal{C}}}(F, P_x^{\mathcal{C}})$$
where $P_x^{\mathcal{C}}=k[\Hom_{\mathcal{C}}(x,-)]$ is a standard projective of $\modcat{\mathcal{C}}$ ( the notation $k[S]$ designates the free $k$-module generated by $S$). 
Let
$$RD:\mathcal{D}^b(\modcat{\mathcal{C}})^{op} \rightarrow \mathcal{D}^b(\modcat{\mathcal{C}^{op}})$$ 
the derived functor of $D$. We prove the following result:
\begin{proposition}
The functor $RD(-)'$ is a Serre Duality for $\mathcal{D}_{\mathrm{perf}}(\mathcal{C}\mathrm{-mod})$.
\end{proposition}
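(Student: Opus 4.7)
The plan is to establish the natural isomorphism $\Hom(F, G)' \cong \Hom(G, RD(F)')$ for $F, G \in \mathcal{D}_{\mathrm{perf}}(\modcat{\mathcal{C}})$, first on the standard projectives and then by a triangulated devissage.

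First I would treat the case $F = P_x^{\mathcal{C}}$, $G = P_y^{\mathcal{C}}$. The Yoneda lemma gives $\Hom_{\modcat{\mathcal{C}}}(P_x^{\mathcal{C}}, P_y^{\mathcal{C}}) = P_y^{\mathcal{C}}(x)$. Since $P_x^{\mathcal{C}}$ is projective, $RD(P_x^{\mathcal{C}}) = DP_x^{\mathcal{C}}$, and the very definition of $D$ gives $DP_x^{\mathcal{C}}(y) = \Hom(P_x^{\mathcal{C}}, P_y^{\mathcal{C}}) = P_y^{\mathcal{C}}(x)$; post-composing with $(-)'$ yields $RD(P_x^{\mathcal{C}})'(y) = P_y^{\mathcal{C}}(x)'$. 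A second application of Yoneda identifies $\Hom(P_y^{\mathcal{C}}, RD(P_x^{\mathcal{C}})') = RD(P_x^{\mathcal{C}})'(y) = P_y^{\mathcal{C}}(x)'$. Hence both sides of the desired Serre isomorphism coincide with $\Hom(P_x^{\mathcal{C}}, P_y^{\mathcal{C}})'$, and the identification is manifestly natural in $x$ and $y$.

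Next I would extend the isomorphism to all perfect complexes. Since $\mathcal{D}_{\mathrm{perf}}(\modcat{\mathcal{C}})$ is the thick triangulated subcategory of $\mathcal{D}^b(\modcat{\mathcal{C}})$ generated by the $P_x^{\mathcal{C}}$, and since both bifunctors $(F,G) \mapsto \Hom(F,G)'$ and $(F,G) \mapsto \Hom(G, RD(F)')$ are cohomological in each variable (the duality $(-)'$ is exact on the finitely generated $k$-modules at play, and $\Hom$ and $RD$ send distinguished triangles to long exact sequences), the triangulated five-lemma propagates the isomorphism from the generators to all of $\mathcal{D}_{\mathrm{perf}} \times \mathcal{D}_{\mathrm{perf}}$: fix $G = P_y^{\mathcal{C}}$ and vary $F$ over the thick closure of $\{P_x^{\mathcal{C}}\}$, then fix $F$ arbitrary perfect and vary $G$.

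The main obstacle is ensuring that $RD(-)'$ genuinely defines an endofunctor of $\mathcal{D}_{\mathrm{perf}}(\modcat{\mathcal{C}})$, i.e., that the $k$-dual of a perfect complex of $\mathcal{C}^{\mathrm{op}}$-modules lies in the thick subcategory generated by the $P_x^{\mathcal{C}}$. On generators this reduces to the identity $DP_x^{\mathcal{C}} = P_x^{\mathcal{C}^{\mathrm{op}}}$ together with the finiteness of each $k[\Hom_{\mathcal{C}}(z,x)]$ required for $(-)'$ to preserve perfection. Once this finiteness is in place, the devissage of the previous paragraph becomes purely formal, and all naturality statements follow from the naturality of Yoneda.
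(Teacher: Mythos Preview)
Your approach differs from the paper's, and it has a gap at the d\'evissage step. The five-lemma does not ``propagate an isomorphism'': it shows that a \emph{given} morphism is an isomorphism once the four flanking ones are. You have checked that the two bifunctors take the same values on pairs of standard projectives, with a natural identification there, but you have not produced a natural transformation $\Hom(F,G)'\to\Hom(G,RD(F)')$ defined for all perfect $F,G$ to which the five-lemma could be applied. Matching values on generators, even naturally, does not by itself manufacture a comparison map on the thick closure (cones in a triangulated category are not functorial). The fix is to construct the map first---either by the adjunction the paper uses, or, in the spirit of your argument, by replacing $F$ and $G$ by bounded complexes of finite-type projectives and applying your base-case isomorphism termwise to get a map of double complexes---and only then invoke the five-lemma.

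The paper's route is shorter and avoids this issue entirely. It uses the already-established isomorphism $R\Hom(X,Y)\cong RD(X)\stackrel{L}{\otimes}Y$ for perfect $X$ (Proposition~\ref{dua3}), then applies the adjunction characterising the tensor product, $(A\otimes_{\mathcal C}B)'\cong\Hom_{\cmod}(B,A')$, to get $(D(P)\otimes Y)'\cong\Hom(Y,D(P)')$. Since $k$ is a field, the dual of a projective $\mathcal C^{op}$-module is an injective $\mathcal C$-module, so $D(P)'$ is a bounded complex of injectives and $\Hom(Y,D(P)')$ computes $R\Hom(Y,RD(X)')$. The natural transformation is supplied globally by the adjunction; no induction is needed.

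Finally, your finiteness hypothesis on $k[\Hom_{\mathcal C}(z,x)]$ is extraneous for the isomorphism itself: the paper needs only that $k$ is a field, and the right-hand $R\Hom$ is taken in the ambient derived category. You are correct that without such finiteness $RD(X)'$ need not lie in $\mathcal D_{\mathrm{perf}}$, so the word ``endofunctor'' in the definition of a Serre functor is in tension with the generality of the statement; the paper does not address this point, but its proof of the natural isomorphism is unaffected.
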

We illustrate the concept of duality functor with an example in the category $\operatorname{Rep}\Gamma^d_k$ of strict polynomial functors
over a finite field $ k $ of characteristic $ p $.
Let $s,r$ be integers and $\mathfrak{s}=\left (s_1,\cdots, s_k\right )$ a partition of $s$. 
The strict polynomial functor $\Gamma^{\mathfrak{s}(r)}$ (resp. $S^{\mathfrak{s}(r)}$) of degree $d:=sp^r$
is the precomposition of $\Gamma^{\mathfrak{s}}$ (resp. $S^{\mathfrak{s}}$) by the Frobenius twist $\operatorname{I}^{(r)}$. 
Recall that $\Gamma^{\mathfrak{s}}$ (resp. $S^{\mathfrak{s}}$) is the tensor product $\Gamma^{s_1}\otimes \cdots \otimes \Gamma^{s_k}$ (resp. $S^{s_1}\otimes \cdots \otimes S^{s_k}$)  where $\Gamma$ (resp. $S$) is the functor of divided (resp. symmetric) powers. We use the properties of the Ringel duality studied in~\cite{Touze1} to show the following result:
\begin{proposition}The strict polynomial functor $\Gamma^{\mathfrak{s}(r)}$ is a duality functor and we have a natural isomorphism
$$\operatorname{Ext}_{\operatorname{Rep}\Gamma^d_k}^i\left (\Gamma^{\mathfrak{s}(r)},-\right )'\cong \operatorname{Ext}_{\operatorname{Rep}\Gamma^d_k}^{2s\left (p^r-1\right )-i}\left (-,S^{\mathfrak{s}(r)} \right).$$
\end{proposition}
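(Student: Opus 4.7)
The plan is to deduce the statement from the Serre duality of the preceding proposition combined with Touzé's description of the Ringel dual of a Frobenius-twisted divided power. Since $k$ is a field, the defining isomorphism~(\ref{I1}) is equivalent to the Ext-Ext formulation recalled in the excerpt, so it is enough to produce an integer $n$ and a functor $G\in\operatorname{Rep}\Gamma^d_k$ such that
$$\operatorname{Ext}^i_{\operatorname{Rep}\Gamma^d_k}(\Gamma^{\mathfrak{s}(r)},-)' \cong \operatorname{Ext}^{n-i}_{\operatorname{Rep}\Gamma^d_k}(-,G).$$
I would take $n=2s(p^r-1)$ and $G=S^{\mathfrak{s}(r)}$.

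First I would check that $\Gamma^{\mathfrak{s}(r)}$ is a perfect object in the derived category of $\operatorname{Rep}\Gamma^d_k$, so that the Serre duality proposition applies. Each factor $\Gamma^{s_i(r)}$ admits a finite resolution by standard projectives (the Troesch/Koszul-type resolution associated with the Frobenius twist), and the total tensor product of these resolutions yields a finite resolution of $\Gamma^{\mathfrak{s}(r)}$. Applying the Serre duality $RD(-)'$ of the preceding proposition then produces, for every $M$, a natural isomorphism
$$\operatorname{Ext}^i_{\operatorname{Rep}\Gamma^d_k}(\Gamma^{\mathfrak{s}(r)},M)' \cong \operatorname{Hom}_{\mathcal{D}^b}\bigl(M,\,RD(\Gamma^{\mathfrak{s}(r)})'[-i]\bigr).$$

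The decisive step is to identify $RD(\Gamma^{\mathfrak{s}(r)})'$ with $S^{\mathfrak{s}(r)}$ placed in the cohomological degree that makes the right-hand side equal to $\operatorname{Ext}^{2s(p^r-1)-i}(M,S^{\mathfrak{s}(r)})$. By Touzé's analysis in~\cite{Touze1} of Ringel duality, the Ringel dual of $\Gamma^{s_i(r)}$ is $S^{s_i(r)}$ concentrated in the single cohomological degree $2s_i(p^r-1)$; this shift matches the length of the minimal standard-projective resolution of $\Gamma^{s_i(r)}$ and is consistent with the Ext-computations of Franjou-Friedlander-Scorichenko-Suslin. The compatibility of Ringel duality with external tensor products, also established in~\cite{Touze1}, forces the shifts to add, giving total shift $\sum_i 2s_i(p^r-1)=2s(p^r-1)$, whence the required identification.

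The main obstacle is precisely this last identification: one must match the Serre functor $RD(-)'$ coming from the general formalism of the present article with Touzé's Ringel dual on the subcategory generated by Frobenius-twisted tensor products of divided powers, and verify the additivity of the cohomological shift under the tensor product. Once these compatibilities are granted, substituting the identification into the Serre duality isomorphism above and rewriting the resulting $\operatorname{Hom}$ in $\mathcal{D}^b$ as $\operatorname{Ext}^{2s(p^r-1)-i}$ delivers the stated formula, and in particular exhibits $\Gamma^{\mathfrak{s}(r)}$ as a duality functor.
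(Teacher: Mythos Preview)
Your overall strategy matches the paper's: show that $RD(\Gamma^{\mathfrak{s}(r)})'$ is concentrated in a single degree and identify it with $S^{\mathfrak{s}(r)}$, then read off the Ext--Ext isomorphism. But the execution has a genuine gap and a factual error.

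The gap is the bridge between $RD(-)'$ and Ringel duality. You flag this as ``the main obstacle'' but do not resolve it, and you implicitly treat $RD(-)'$ as if it were the Ringel dual $\theta$ itself. The paper's key input here is Krause's theorem that $\theta^{-2}$ is a Serre functor for $\mathcal{D}_{\mathrm{perf}}(\operatorname{Rep}\Gamma^d_k)$; since a Serre functor is unique up to isomorphism, this forces $RD(-)'\cong\theta^{-2}$. Without that identification you have no way to compute $RD(\Gamma^{\mathfrak{s}(r)})'$ from Touz\'e's results, which are about $\theta$.

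The factual error is your claim that ``the Ringel dual of $\Gamma^{s_i(r)}$ is $S^{s_i(r)}$ concentrated in degree $2s_i(p^r-1)$.'' Touz\'e's formulas give $\theta(S^{\mathfrak{s}})\cong\Lambda^{\mathfrak{s}}$ and $\theta(\Lambda^{\mathfrak{s}})\cong\Gamma^{\mathfrak{s}}$, together with $\theta^n(C\circ I^{(r)})\cong(\theta^n(C)\circ I^{(r)})[sn(p^r-1)]$ for $C$ of degree $s$. A single application of $\theta$ to $\Gamma^{s_i(r)}$ neither yields $S^{s_i(r)}$ nor produces the shift $2s_i(p^r-1)$; the factor $2$ only appears because the Serre functor is $\theta^{-2}$. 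The paper computes directly
\[
RD(\Gamma^{\mathfrak{s}(r)})'\cong\theta^{-2}(\Gamma^{\mathfrak{s}(r)})\cong\theta^{2}\bigl((\Gamma^{\mathfrak{s}(r)})^{\#}\bigr)^{\#}=\theta^{2}(S^{\mathfrak{s}(r)})^{\#}\cong\bigl(\Gamma^{\mathfrak{s}(r)}\bigr)^{\#}[2s(p^r-1)]=S^{\mathfrak{s}(r)}[2s(p^r-1)],
\]
handling the partition $\mathfrak{s}$ in one stroke via $\theta^2(S^{\mathfrak{s}})\cong\Gamma^{\mathfrak{s}}$, so no tensor-product compatibility argument is needed.
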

 \medskip
Here is a summary of the different sections.
\par\medskip
Section $2$: This section is a collection of reminders and notations used in the article.
\par\medskip
Section $3$: In the case of modules over a ring $R$, the structure of right $R$-module on the dual 
$M'=\Hom_R(M,R)$ of a left $R$-module $M$ is inherited from the structure of $R$-bimodule of $R$. 
An equivalent of the bimodule $R$ is the bifunctor 
$$\begin{array}{crcl}\underline{\underline{\mathcal{C}}}:&\mathcal{C}^{op}\times\mathcal{C}&\rightarrow &k\mathrm{-mod}\\
& (x,y)&\mapsto& k[\Hom_{\mathcal{C}}(x,y)]\end{array}$$ 
Our duality theory is based on the following property
\begin{equation}\label{ast}
\Hom_{\cmod}(\underline{\underline{\mathcal{C}}}(y,-),\underline{\underline{\mathcal{C}}}(x,-)) \cong \underline{\underline{\mathcal{C}}}(x,y)
\tag{$\ast$}\end{equation}
provided by the Yoneda lemma. 
We call a bifunctor  with the property~\eqref{ast} a Yoneda bifunctor. We show that there is a $1-1$ correspondance between equivalence classes of epireflective and coreflective subcategories of $ \mathcal{C} \mathrm{-mod} $ and isomorphism classes of Yoneda bifunctors for $ \mathcal{C} $.
\par\medskip
Section $4$: 
We show that a Yoneda bifunctor $T$ for a category $\mathcal{C}$ provides a functor of duality $D$ 
and we study its first properties. 
We illustrate this point of view by showing (proposition~\ref{dualitedeserre}) that if $T$ is  Yoneda bifunctor for a $k$-linear category $\mathcal{C}$ then
the functor $RD(-)'$ is a Serre duality of $\mathcal{D}_{\mathrm{perf}}(\mathcal{C}_T\mathrm{-mod})$.
\par\medskip
Section $5$: A functor $F$ of $\cmod$ is a duality functor
if there exists an integer $n$ such that one has an isomorphism 
$$RD\left (F\right )\cong R^nD\left (F\right )[n]$$
in $\mathcal{D}^+\left (\mathcal{C}\mathrm{-mod}\right )$. In this section, we give several equivalent conditions to the latter. 
\par\medskip
Section $6$:
We illustrate the notions of duality presented in this article by the category $\operatorname{Rep}\Gamma^d_k$ of strict polynomial functors of degree $d$
over a finite field $k$.
We equip this category with a Serre duality functor $RD(-)'$
and we prove that the strict polynomial functor $\Gamma^{\mathfrak{s}(r)}$ is a duality functor.
 \par\medskip
Section $7$: The rest of this article is devoted to the notion of duality category. A category is a duality category if the constant functor $\underline{\mathbb Z}$ is a duality functor. 
We isolate a sufficient local criterion for a category to be a duality category.
The theorem~\ref{th1} presents this criterion in the case where the category is the partially ordered set of simplices of a simplicial complex.
We thus recover the classical Poincar\'e duality for manifolds.
It also follows that spherical Tits buildings are duality categories.
\subsection*{Acknowledgments.}
I thank Ga\"el Collinet who drawed my
attention on the
problem of generalizing
the theory of duality
groups and supplied a
constant help until the writing of this article. I also
wish to thank Antoine Touz\'e for discussions on Ringel duality.
\section{Preliminaries}
In this section, the notation $\mathcal{C}$ designates an essentially small category,
the notation $k$ designates a unitary commutative ring and 
the notation $\modcat{k}$
designates the category of left $k$-modules. 
The category $\modcat{\mathbb Z}$ is denoted $\Ab$.
\bull{The category  $\cmod$.} 
\begin{definitions}\ 
\begin{itemize}
\item A left (resp. right) $\mathcal{C}$-module is a  covariant functor from  $\mathcal{C}$ (resp. $\mathcal{C}^{op}$) to the category $\modcat{k}$.
\item We denote $I$ the  category $0\rightarrow 1$.
Let $F$ and $G$ be two left $\mathcal{C}$-modules. A natural transformation between $F$ and $G$ is a functor $$T:\mathcal{C}\times I\rightarrow k\mathrm{-mod}$$ such that $T(c,0)=F(c)$ and $T(c,1)=G(c)$. 
\item We denote $\cmod$ (resp. $\copmod$) 
the category whose objects are left (resp. right) $\mathcal{C}$-modules and whose morphisms are natural transformations.
\end{itemize}
\end{definitions}
\begin{properties}\label{propreties}\
\begin{itemize}
\item For two objects $F$ and $G$ of $\cmod$, the set
  $\Hom_{\cmod}(F,G)$ of natural transformations between $F$ and $G$
  is naturally provided with a $k$-module structure.
\item The bifunctor 
$$\Hom_{\cmod}(-,-): (\cmod)^{\text{op}} \times \cmod \rightarrow \modcat{k}$$
commutes with limits with respect to each variable 
(limit objects in $(\cmod)^{op}$ identify with colimit objects in $\cmod$). 
\item Let $P_a^{\mathcal{C}}$ denotes the functor $k[\Hom_{\mathcal{C}}(a,-)]$. The linear analogue of the Yoneda Lemma (\protect{\cite[III.2]{categories}}) provides the following natural isomorphism
\begin{equation}\label{yonneda}\Hom_{\cmod}(P_a^{\mathcal{C}},F)\cong F(a).\end{equation}
The isomorphism is given by the following map 
$$\begin{array}{rcl}\Hom_{\cmod}(P_a^{\mathcal{C}},F)&\rightarrow &F(a)\\
N&\mapsto& N_a(\mathbf{1}_a)\end{array}.$$
The inverse is defined by associating to $\lambda \in F(a)$ the natural transformation $N:P_a^{\mathcal{C}}\rightarrow F$ whose component $N_x$ is given by $N_x(f)=F(f)\lambda$ for $f\in \Hom_{\mathcal{C}}(a,x)$. 

\item The categories $\cmod$ and $\copmod$ are abelian. This follows from the definition of an abelian category and the fact that kernels, cokernels and more generally limits and colimits are calculated pointwise (see \protect{\cite[1.6.4 and A.4]{weibel}}).
\item For any object $a$ of $\mathcal{C}$, the functors called standard projectives
$P_a^{\mathcal{C}}:=k[\Hom_{\mathcal{C}}(a,-)]$ and
$P^a_{\mathcal{C}}:= k[\Hom_{\mathcal{C}}(-,a)]$
are projective in  $\cmod$ and $\copmod$ respectively. This is a consequence of the Yoneda isomorphism \ref{yonneda} and the fact that a short exact sequence in $\cmod$ is exact if and only if it's exact pointwise. 

\item The categories $\cmod$ and $\copmod$  have enough projectives. The standard projectives of $\cmod$ (resp. of $\copmod$) 
form a system of projective generators. This is a particular case of corollary \ref{CT}. They also have enough injectives (see \protect{\cite[theorem 1.10.1]{Grothendieck}}).

\end{itemize}
\end{properties}

\bull{Tensor product over $\mathcal{C}$.}
Let $F$ be a right $\mathcal{C}$-module and $G$ a left $\mathcal{C}$-module. The tensor product $F\otimes_{\mathcal{C}}G$ of $F$ and $G$ over $\mathcal{C}$ is the quotient of the $k$-module $$\bigoplus_{x\in \text{Ob}\mathcal{C}}F(x)\otimes_{k}G(x)$$ by the submodule generated by the elements
$$F(f)a\otimes b-a\otimes G(f)b~~~~(\forall f\in
\Hom_{\mathcal{C}}(x,y), \forall a\in F(y), \forall b\in G(x)).$$
\begin{properties}\ 
\begin{itemize}
\item The tensor product over $\mathcal{C}$ defines a bifunctor
$$-\otimes_{\mathcal{C}}-:\copmod \times \cmod \rightarrow \modcat{k}$$
which commutes with colimits with respect to each variable.
In particular,
the bifunctor $-\otimes_{\mathcal{C}}-$ is right exact relative to each variable. 
\item We have the following natural isomorphisms
\begin{equation} \label{yonnedat}P^x_{\mathcal{C}}\otimes_{\mathcal{C}}G\cong
  G(x)\text{    and    }F\otimes_{\mathcal{C}}P_x^{\mathcal{C}} \cong
  F(x).\end{equation}
\item For any $k$-module $M$, and any $F$ in $\copmod $,
we denote $\underline{\Hom}(F,M)$ the
$\mathcal{C}$-module given by $x \mapsto \Hom_{k\mathrm{-mod}}(F(x),M)$. The adjunction 
$$\Hom_{k}(F\otimes_{\mathcal{C}}G, M)\cong \Hom_{\cmod}(G,\underline{\Hom}(F,M))$$
characterizes the tensor product up to natural isomorphism.\\
\end{itemize}
\end{properties}
\bull{Category of complexes.}
Let  $\mathcal{A}$ be an abelian category. 
We denote $\Ch(\mathcal{A})$ the abelian category whose objects are cochain complexes of objects of $\mathcal{A}$  and whose morphisms are morphisms of complexes. 
the category $\Ch(\mathcal{A})$ contains the full subcategories $\Ch^b(\mathcal{A})$, $\Ch^-(\mathcal{A})$
 and $\Ch^+(\mathcal{A})$ of bounded, bounded above and bounded below cochain complexes. The notation $\Ch^*(\mathcal{A})$ designates one of these subcategories.
\par
Let $X=(X^n,d_X^n)$ and $Y=(Y^n,d_Y^n)$ be two complexes of $\Ch(\mathcal{A})$. 
The bifunctor $\Hom_{\mathcal A} (-,-):\mathcal{A}^{op}\times \mathcal{A}\rightarrow \modcat{k}$ 
induces a bifunctor
$$\Hom(-,-):\Ch(\mathcal{A})^{op}\times \Ch(\mathcal{A})\rightarrow \Ch(\modcat{k})$$
defined by
$$\left(\Hom(X,Y)\right)^n=\prod_{i\in \mathbb{Z}}\Hom_{\mathcal{A}}(X^i,Y^{i+n})$$
and $$d^nf=d_Y\circ f+(-1)^{n+1}f\circ d_X,~~f\in (\Hom(X,Y))^n.$$
Let $X=(X^n,d_X^n)$ be a complex of $\Ch(\copmod)$ and $Y=(Y^n,d_Y^n)$ a complex of $\Ch(\cmod)$.  
The bifunctor $-\otimes_{\mathcal{C}}-:\copmod\times \cmod\rightarrow \modcat{k} $ induces a bifunctor
$$-\otimes-:\Ch(\copmod)\times \Ch(\cmod)\rightarrow \Ch(\modcat{k})$$
defined by
$$\left( X\otimes
Y\right)^n=\bigoplus_{p+q=n}X^p\otimes_{\mathcal{C}}Y^q\ \ \ ,$$
the differentials being defined, for $a \in
\text{Ob}(\mathcal{C})$, $x\in X^p(a)$ and $y\in Y^q(a)$ by the formula
$$d^n(x\otimes y)=x\otimes d_Y^q y +(-1)^{q} d_X^px\otimes y\ \ \ .$$
\bull{Extensions groups over $\mathcal{C}$.}
The bifunctor 
$$\Hom_{\cmod}(-,-):\cmod^{op} \times \cmod\rightarrow \modcat{k}$$ 
commutes with limits relative to each variable, we denote 
$\text{Ext}^*_{\cmod}(-,-)$ the extensions groups over $\mathcal{C}$ defined as right derivatives of the bifunctor $\Hom_{\cmod}(-,-)$. Let $F$ and $G$ be two left $\mathcal{C}$-modules, we obtain 
$\text{Ext}^*_{\cmod}(F,G)$ 
either by choosing a projective resolution $\mathcal{P}_*$ of $F$ and computing the cohomology of the cochain complex $\text{Hom}_{\mathcal{C}\mathrm{-mod}}(\mathcal{P}_*,G)$,
or by choosing an injective resolution $\mathcal{I}^*$ of $G$ and computing the cohomology of the cochain complex $\text{Hom}_{\mathcal{C}\mathrm{-mod}}(F,\mathcal{I}^*)$.
\bull{Torsion groups over $\mathcal{C}$.}
The bifunctor 
$$-\otimes_{\mathcal{C}}-\ :\ \copmod\times\cmod \to \modcat{k}$$
is right exact with respect to each variable, we denote 
$\text{Tor}^{\cmod}_*(-,-)$ the torsion groups over $\mathcal{C}$ defined 
as the left derivatives of the tensor product over $\mathcal{C}$. For a right $\mathcal C$-module $F$ and a left $\mathcal C$-module $G$, we obtain 
$\text{Tor}^{\cmod}_*(F,G)$ either by choosing a projective resolution $\mathcal{P}_*$ of $F$ and computing the homology of the chain complex $P_*\otimes_{\mathcal{C}}G$ or by choosing a projective resolution $Q_*$ of $G$ and computing the homology of the chain complex $F\otimes_{\mathcal{C}}Q_*$.
\section{Yoneda bifunctor and epireflective subcategory}
We add the following notations to those introduced in the previous section.
\begin{notations}\label{notations}Let
  $B(-,-):\mathcal{C}^{op}\times\mathcal{C}\rightarrow k\mathrm{-mod}$ a bifunctor. Let $F$ and $G$ be
elements of $\cmod$
and $\modcat{\mathcal{C}^{op}}$
respectively. We denote 
\begin{itemize}
\item $B_a$ the left $\mathcal{C}$-module $B(a,-)$.
\item $B^a$ the right $\mathcal{C}$-module $B(-,a)$.
\item $\mathcal{H}om(F,B)$ the right $\mathcal{C}$-module $a\mapsto  \Hom_{\cmod}(F,B_a)$. 
\item $\mathcal{H}om(B,F)$ the left $\mathcal{C}$-module $a\mapsto  \Hom_{\cmod}(B_a,F)$. 
\item $B\otimes_{\mathcal{C}}F$ the left $\mathcal{C}$-module $a\mapsto B^a\otimes_{\mathcal{C}}F$.
\item $G\otimes_{\mathcal{C}}B$ the right $\mathcal{C}$-module $a\mapsto G\otimes_{\mathcal{C}}B_a$.
\end{itemize}
\end{notations}
We denote $\underline{\underline{\mathcal{C}}}$ the bifunctor $$ (x,y)\mapsto k[\Hom_{\mathcal{C}}(x,y)].$$ 
We have the equalities $\underline{\underline{\mathcal{C}}}_a=P_a^{\mathcal{C}}$ and $\underline{\underline{\mathcal{C}}}^a=P^a_{\mathcal{C}}$.
\begin{definition}
A Yoneda bifunctor for $\mathcal C$ is a bifunctor 
$T : \mathcal C^{op}\times\mathcal C \to k\mathrm{-mod}$ with a surjection
$$ \mu: \underline{\underline{\mathcal{C}}} \xymatrix{  \ar@{->>}[r]& } T$$
such that the canonical morphism between elements of $\modcat{(\mathcal C^{op}\times\mathcal C)}$
\begin{equation}\label{YBI} 
\varphi : \left[(a,b)\mapsto \Hom_{\cmod}(T_b,T_a)\right] \to \left[(a,b)\mapsto T(a,b)\right]
\end{equation}
defined by $\varphi_{(a,b)}(F)=F_b(\mu_{b,b}(\mathbf{1}_b))$ is an isomorphism where $F:T_b\rightarrow T_a$ is a natural transformation and $F_b:T_b(b)\rightarrow T_a(b)$ the component of $F$ assigned to $b$.
\end{definition}
\begin{examples}\label{yonedaexemple}~~
\begin{enumerate}
\item The bifunctor $$\begin{array}{crcl}\underline{\underline{\mathcal{C}}}:&\mathcal{C}^{op}\times\mathcal{C}&\rightarrow &k\mathrm{-mod}\\
& (x,y)&\mapsto& k[\Hom_{\mathcal{C}}(x,y)]\end{array}$$ 
with the identity
transformation as the surjection $\mu$ is a Yoneda bifunctor. This is a direct consequence of the Yoneda isomorphism (see properties \ref{propreties}).

\item Suppose $\mathcal{C}$ is a $k$-linear category. We denote $L_{\mathcal{C}}$ the bifunctor $$\begin{array}{crcl}L_{\mathcal{C}}:&\mathcal{C}^{op}\times\mathcal{C}&\rightarrow &\modcat{k}\\
& (x,y)&\mapsto& \Hom_{\mathcal{C}}(x,y)\end{array}.$$
The bifunctor $L_{\mathcal{C}}$ with the surjection 
$$\begin{array}{crcl}\mu_{x,y}:&k[\Hom_{\mathcal{C}}(x,y)]&\rightarrow &\Hom_{\mathcal{C}}(x,y)\\
& [f]&\mapsto& f\end{array}$$
is a Yoneda bifunctor. Indeed the map
$$\begin{array}{rcl}\Hom_{\cmod}\left(\Hom_{\mathcal{C}}(y,-),\Hom_{\mathcal{C}}(x,-)\right)&\rightarrow &\Hom_{\mathcal{C}}(x,y)\\
T&\mapsto& T_y(\mu_{y,y}(\mathbf{1}_y)).\end{array}$$
is an isomorphism. The inverse is given by associating to $f\in \Hom_{\mathcal{C}}(x,y)$ the natural transformation whose components are the precomposition by $f$.
\end{enumerate}
\end{examples}
\begin{proposition}\label{Y1}
Let $T$ be a Yoneda bifunctor. 
\begin{enumerate}
\item The bifunctor $T^{op}:(a,b)\mapsto T(b,a)$ is a Yoneda bifunctor for $\mathcal{C}^{op}$.
\item The canonical morphism between elements of $(\mathcal C^{op}\times\mathcal C)\mathrm{-mod}$
$$ \psi :\left[(a,b)\mapsto \Hom_{\cmod}(T_b,T_a)\right] \to \left[(a,b)\mapsto T^b\otimes_{\mathcal{C}}T_a)\right] $$
defined by $\psi_{(a,b)}(F)=\mu_{b,b}(\mathbf{1}_b)\otimes F_b(\mu_{b,b}(\mathbf{1}_b))$ is an isomorphism.
\end{enumerate}
\end{proposition}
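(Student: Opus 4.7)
My plan is to treat the two parts in sequence, both reducing to manipulations with the surjection $\mu:\underline{\underline{\mathcal{C}}}\twoheadrightarrow T$ and the defining isomorphism~\eqref{YBI}.

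For part (1), I will first use the identification $\underline{\underline{\mathcal{C}^{op}}}(a,b) = \underline{\underline{\mathcal{C}}}(b,a)$ to take $\mu^{op}_{(a,b)}=\mu_{(b,a)}$, which inherits surjectivity from $\mu$. Since $(T^{op})_b = T^b$ and $(T^{op})_a = T^a$ as right $\mathcal{C}$-modules, the content of the statement reduces to showing that
$$\Hom_{\copmod}(T^b,T^a)\longrightarrow T(b,a),\quad G\mapsto G_b(\mu_{b,b}(\mathbf{1}_b))$$
is an isomorphism. Precomposition with the epimorphism $\mu^b:P^b_{\mathcal{C}}\twoheadrightarrow T^b$ embeds the source into $\Hom_{\copmod}(P^b_{\mathcal{C}},T^a)\cong T(b,a)$ via the linear Yoneda lemma for right modules, which handles injectivity. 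For surjectivity I will take $t\in T(b,a)$, write $t=\mu_{b,a}\bigl(\sum_j\gamma_j[g_j]\bigr)$ using surjectivity of $\mu$, and verify by naturality of $\mu$ in both slots that the corresponding natural transformation $\hat t:P^b_{\mathcal{C}}\to T^a$ annihilates $\mathrm{Ker}(\mu^b)$, so that it factors through $\mu^b$ to give the required $G:T^b\to T^a$.

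For part (2), I will factor $\psi$ as
$$\Hom_{\cmod}(T_b,T_a)\xrightarrow{\ \varphi\ }T(a,b)\xrightarrow{\ \iota\ }T^b\otimes_{\mathcal{C}}T_a,\qquad \iota(t)=\mu_{b,b}(\mathbf{1}_b)\otimes t,$$
and since $\varphi$ is an isomorphism by hypothesis, it will be enough to invert $\iota$. I plan to define an inverse $\rho:T^b\otimes_{\mathcal{C}}T_a\to T(a,b)$ on generators by $\rho(s\otimes t)=\tilde t_b(s)$ for $s\in T^b(x)$ and $t\in T_a(x)$, where $\tilde t:T_x\to T_a$ is the natural transformation $\varphi^{-1}_{(a,x)}(t)$ supplied by the Yoneda bifunctor hypothesis. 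The main obstacle will be showing that $\rho$ descends to the quotient, i.e.\ respects the relation $T^b(f)(s)\otimes t = s\otimes T_a(f)(t)$ for $f:x\to y$; this should follow from the naturality of $\varphi$ in its covariant variable, which identifies $\widetilde{T_a(f)(t)}$ with the composite $\tilde t\circ T(f,-)$, where $T(f,-):T_y\to T_x$ is the morphism induced by $f$ on the contravariant slot, so both sides of the relation evaluate at $s$ to $\tilde t_b(T^b(f)(s))$. Once $\rho$ is well-defined, the identities $\rho\circ\iota=\mathrm{id}$ and $\iota\circ\rho=\mathrm{id}$ will be routine: the first is just the defining property $\tilde t_b(\mu_{b,b}(\mathbf{1}_b))=t$ of $\tilde t$, while for the second I will use the surjection $\mu^b$ to express an arbitrary generator as $s=\sum_i\lambda_i\mu_{x,b}([f_i])$ and slide each $f_i$ across the tensor product via the defining relations to reach $\mu_{b,b}(\mathbf{1}_b)\otimes\tilde t_b(s)$.
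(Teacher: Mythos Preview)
Your proposal is correct. For part (2) it is essentially the paper's argument in different clothing: both factor $\psi$ through $\varphi$ and the map $T(a,b)\to T^b\otimes_{\mathcal C}T_a$, and both build the inverse by pairing $s\in T^b(x)$ with $t\in T_a(x)$ via the identifications $T^b(x)\cong\Hom_{\cmod}(T_b,T_x)$ and $T_a(x)\cong\Hom_{\cmod}(T_x,T_a)$ and then composing. Your formula $\rho(s\otimes t)=\tilde t_b(s)$ is precisely $\varphi_{(a,b)}$ applied to the composite $R\circ S$ that the paper writes down, and your check of the tensor relation is the same compatibility the paper records as $R\circ T(b,f)S=T(f,a)R\circ S$.

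For part (1) the surjectivity argument is genuinely different. Given $\lambda\in T(b,a)$, the paper takes the natural transformation $F:T_a\to T_b$ supplied by the hypothesis $\varphi_{(b,a)}^{-1}$ and \emph{transports} precomposition by $F$ along $\varphi$, setting $\tilde F_x:=\varphi_{(x,a)}\circ F^*\circ\varphi_{(x,b)}^{-1}:T^b(x)\to T^a(x)$. You instead work directly with the Yoneda map $\hat t:P^b_{\mathcal C}\to T^a$ and verify, using only naturality of $\mu$ in each slot, that it annihilates $\ker\mu^b$ and hence descends. The paper's route is a clean one-line conjugation that visibly uses the full Yoneda-bifunctor isomorphism $\varphi$; your route is more hands-on but has the interesting feature that it never actually invokes the hypothesis that $\varphi$ is an isomorphism---surjectivity and naturality of $\mu$ suffice.
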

\begin{proof}~~
\begin{enumerate}
\item The canonical morphism
$$ \varphi^{op} : \left[(a,b)\mapsto \Hom_{\mathcal C\mathrm{-mod}}(T^b,T^a)\right] \to \left[(a,b)\mapsto T(b,a)\right]$$
defined by $\varphi^{op}_{(a,b)}(F)=F_b(\mu_{b,b}(\mathbf{1}_b))$ is injective. \\
Let $\lambda \in T(b,a)$. 
The assumption on $T$ implies that there is a unique natural transformation $F:T_a\to T_b$ 
such that $F_b(\mu_{b,b}(\mathbf{1}_b))=\lambda$. 
We define a natural transformation $\tilde{F}:T^b\to T^a$ as follows: for $x$ in $\text{Ob}(\mathcal{C})$,
$$\begin{array}{crcl}
\tilde{F}_x:&T^b(x)&\rightarrow & T^a(x)\\
&\lambda'&\mapsto& \varphi_{(x,a)}\circ F^*\circ \varphi^{-1}_{(x,b)}(\lambda').
\end{array}$$
The equality $\varphi^{op}(\tilde{F})=\lambda$ 
implies the surjectivity of $\varphi^{op}$.  
\item The morphism $\psi$ of the proposition is the composition
$$\Hom_{\cmod}(T_b,T_a) \to  T(a,b) \to  P^b_{\mathcal{C}}\otimes_{\mathcal{C}}T_a \stackrel{\mu}{\rightarrow} T^b\otimes_{\mathcal{C}}T_a.$$
The first two maps are isomorphisms. 
The third is surjective since the tensor product is right exact. 
We construct an inverse $T^b\otimes_{\mathcal{C}}T_a \rightarrow \Hom_{\cmod}(T_b,T_a)$ to $\psi$. 
First, under the natural transformation (\ref{YBI}), we identify $T(x,b)$ to $\Hom_{\cmod}(T_b,T_x)$ and $T(a,x)$ to $\Hom_{\cmod}(T_x,T_a)$ for each object $x$ of $\mathcal{C}$. The composition of natural transformation gives a bilinear map 
$$\begin{array}{cccl}
&\Hom_{\cmod}(T_b,T_x)\times 
\Hom_{\cmod}(T_x,T_a) &\rightarrow &\Hom_{\cmod}(T_b,T_a)\\
&(S,R)&\mapsto& R\circ S
\end{array}$$
that induces a linear map $$\Hom_{\cmod}(T_b,T_x)\otimes_k \Hom_{\cmod}(T_x,T_a)\rightarrow \Hom_{\cmod}(T_b,T_a).$$ Taking the direct sum over objects of $\mathcal{C}$, we obtain a map 
\begin{equation}\label{YBI1}\bigoplus_{x\in \text{Ob}\mathcal{C}}{\Hom_{\cmod}(T_b,T_x)\otimes_k \Hom_{\cmod}(T_x,T_a)} \rightarrow \Hom_{\cmod}(T_b,T_a).\end{equation}
Let $f:x\rightarrow y$ a morphism in $\mathcal{C}$. Denote $T(b,f)$ (resp. $T(f,a)$) the induced morphism 
$\Hom_{\cmod}(T_b,T_y)\rightarrow \Hom_{\cmod}(T_b,T_x)$ (resp. $\Hom_{\cmod}(T_x,T_a)\rightarrow \Hom_{\cmod}(T_y,T_a)$). The equality 
$R\circ T(b,f)S=T(f,a)R\circ S$ for all $S\in \Hom_{\cmod}(T_b,T_y)$ and $R \in \Hom_{\cmod}(T_x,T_a)$ implies that the map (\ref{YBI1}) passes to quotient to give a map $T^b\otimes_{\mathcal{C}}T_a \rightarrow \Hom_{\cmod}(T_b,T_a)$ that is an inverse to $\psi$.

\end{enumerate}
\end{proof} 
\begin{notations}\label{categorieyoneda} We denote $\mathcal{C}_T\mathrm{-mod}$ the smallest full subcategory of $\cmod$ containing the $T_a$ and stable under taking limits and colimits.
This notation will be justified by corollary \ref{CT}.
\end{notations}
\begin{example}~~
\begin{enumerate}
\item $\mathcal{C}_{\underline{\underline{\mathcal{C}}}}\mathrm{-mod}=\mathcal{C}\mathrm{-mod}$.
\item $\mathcal{C}_{\underline{\underline{\mathcal{C}}}^{op}}\mathrm{-mod}=\mathcal{C}^{op}\mathrm{-mod}$.
\item If $\mathcal{C}$ is $k$-linear then $\mathcal{C}_{L_{\mathcal{C}}}\mathrm{-mod}$ is the full subcategory of $k$-linear functors of $\mathcal{C}\mathrm{-mod}$.
\end{enumerate}
\end{example}

\begin{proposition}\label{proprietedeyoneda}
Let $F$ be an object of $\mathcal{C}_T\mathrm{-mod}$. Then the map $$\mathcal{H}om(T,F)\stackrel{\mu^*}{\rightarrow} \mathcal{H}om(\underline{\underline{\mathcal{C}}},F)\cong F$$
induced by $\mu$ is an isomorphism.
\end{proposition}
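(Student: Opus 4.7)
My plan is to let $\mathcal{E}\subseteq\cmod$ be the full subcategory of those $F$ for which $\mu^*_F:\mathcal{H}om(T,F)\to F$ is an isomorphism, and to prove that $\mathcal{E}$ contains every $T_b$ and is stable under both limits and colimits taken in $\cmod$. Since by Notation~\ref{categorieyoneda} the category $\mathcal{C}_T\mathrm{-mod}$ is by definition the smallest full subcategory with these closure properties, this will give $\mathcal{C}_T\mathrm{-mod}\subseteq\mathcal{E}$, which is exactly the proposition.

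For the base case $F=T_b$, I would trace through the Yoneda isomorphism~(\ref{yonneda}). The composite
$$\Hom_{\cmod}(T_a,T_b)\xrightarrow{\mu_a^*}\Hom_{\cmod}(P_a^{\mathcal{C}},T_b)\cong T_b(a)$$
sends $G:T_a\to T_b$ to $(G\circ\mu_a)_a(\mathbf{1}_a)=G_a(\mu_{a,a}(\mathbf{1}_a))$, which is exactly the formula defining $\varphi_{(b,a)}$; hence the composite is an isomorphism by the Yoneda-bifunctor axiom~(\ref{YBI}). Stability of $\mathcal{E}$ under limits is then formal: both $\mathcal{H}om(T,-)$ and the identity preserve limits (each $\Hom_{\cmod}(T_a,-)$ does, and limits in $\cmod$ are computed pointwise), so $\mu^*$ on a limit is itself a limit of isomorphisms in the abelian category $\cmod$.

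The main obstacle is closure under colimits, because $\Hom_{\cmod}(T_a,-)$ does not preserve colimits in general. I would split the verification into injectivity and surjectivity of $\mu_a^*$. Injectivity holds for every $F\in\cmod$: since $\mu_a:P_a^{\mathcal{C}}\twoheadrightarrow T_a$ is an epimorphism by the definition of a Yoneda bifunctor, precomposition $\mu_a^*$ is injective. For surjectivity, given $F=\operatorname{colim}_i F_i$ with each $F_i\in\mathcal{E}$, fix $\lambda\in F(a)=\operatorname{colim}_i F_i(a)$ (colimits being pointwise in $\cmod$). Realizing this colimit in $\Ab$ as a quotient of $\bigoplus_i F_i(a)$, I would write $\lambda=\sum_j \iota_{i_j}(\lambda_{i_j})$ as a finite sum of images along the structure maps $\iota_i:F_i\to F$ of elements $\lambda_{i_j}\in F_{i_j}(a)$. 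Since $F_{i_j}\in\mathcal{E}$, each $\lambda_{i_j}$ is the image under $\mu_a^*$ of some natural transformation $\tilde{\lambda}_{i_j}:T_a\to F_{i_j}$, and then $\sum_j \iota_{i_j}\circ\tilde{\lambda}_{i_j}:T_a\to F$ is a preimage of $\lambda$ under $\mu_a^*$, which finishes the argument.
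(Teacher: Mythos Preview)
Your proof is correct and follows the same three-step scaffold as the paper: verify the statement for the generators $T_b$, then close under limits, then close under colimits. The base case and the limit step are handled identically.

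The difference lies in the colimit step. The paper sets up a comparison diagram between the cocones $\Hom(T_a,F_i)\to\Hom(T_a,\operatorname{colim}F_i)$ and $\Hom(P_a^{\mathcal C},F_i)\to\Hom(P_a^{\mathcal C},\operatorname{colim}F_i)$, uses that $P_a^{\mathcal C}$ commutes with colimits, and chases universal properties to conclude that $\mu^*$ is surjective (hence an isomorphism, the injectivity being implicit from $\mu$ being epi). You instead argue directly at the level of elements: injectivity of $\mu_a^*$ is automatic from the epimorphism $\mu_a$, and surjectivity comes from writing any $\lambda\in\operatorname{colim}_i F_i(a)$ as a finite sum of images from the $F_i(a)$ and lifting termwise. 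Your route is more elementary and avoids the diagram chase; the paper's route has the side benefit of simultaneously exhibiting the canonical comparison $\beta:\operatorname{colim}_i\Hom(T_a,F_i)\to\Hom(T_a,\operatorname{colim}_iF_i)$ as an isomorphism, which is morally the content behind Corollary~\ref{CT}. Either way one recovers that $\Hom(T_a,-)$ commutes with colimits inside $\mathcal C_T\mathrm{-mod}$, so nothing is lost. One small cosmetic point: you write ``in $\Ab$'' where the ambient target is $\modcat{k}$, but the argument is unchanged since colimits in $\modcat{k}$ are likewise quotients of direct sums.
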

\begin{proof}
Let us say that a functor is nice if it satisfies the property of the
proposition. That a functor $F=T_a$ is nice is obvious. We will show
that any limit, resp. any colimit, of nice functors is a nice functor. By
the definition of $\mathcal{C}_T\mathrm{-mod}$, this will give the desired result. First, suppose $F$ is a functor of $\mathcal{C}_T\mathrm{-mod}$ which is a limit of nice objects. Write $F=\mathrm{lim}\ F_i$. The bifunctor $\Hom(-,-)$ commutes with limits, so 
$$\Hom(T_a,\mathrm{lim}\ F_i)\cong \mathrm{lim}\ \Hom(T_a,F_i)\cong F(a).$$ 
Secondly, suppose $F$ is a functor of $\mathcal{C}_T\mathrm{-mod}$ which is a colimit of nice objects. We write $F=\mathrm{colim}\ F_i$. Let 
$$\xymatrix@{}{F_i \ar[d]_{f_{i}}& \Hom(T_a,F_i) \ar[d]_{c_i}& \Hom(P_a^{\mathcal{C}},F_i) \ar[d]_{d_i}\\ 
\mathrm{colim}\ F_i& \mathrm{colim}\ \Hom(T_a,F_i)& \mathrm{colim}\ \Hom(P_a^{\mathcal{C}},F_i)}$$ 
be the universal cocones. We obtain the following diagram
$$
\xymatrix@{}{&\Hom(T_a,F_i) \ar[ldd]_{f_{i*}}  \ar@{^{(}->}[r]^{\mu^*} \ar[d]^{c_i}& \Hom(P_a^{\mathcal{C}},F_i) \ar[d]_{d_i} \ar[rdd]^{f_{i*}}&\\
&\mathrm{colim}\ \Hom(T_a,F_i) \ar[r]_{\alpha} \ar[ld]^{\beta} &\mathrm{colim}\ \Hom(P_a^{\mathcal{C}},F_i) \ar[rd]_{\gamma}&\\
 \Hom(T_a,\mathrm{colim}\ F_i) \ar@{^{(}->}[rrr]_{\mu^*}& & &\Hom(P_a^{\mathcal{C}},\mathrm{colim}\ F_i)}
 $$
where $\alpha, \beta$ and $\gamma$ are given by the universal property. We show that $\beta$ is an isomorphism.  The morphism\linebreak
$\mu^*:\Hom(T_a,F_i)\rightarrow \Hom(P_a^{\mathcal{C}},F_i)$
is an isomorphism. It follows that $\alpha$ is also an isomorphism. The standard projective $P_a^{\mathcal{C}}$ commutes with colimits, (\textit{i.e.} $\gamma$ is an isomorphism) so the
cocone 
$\Hom(P_a^{\mathcal{C}},F_i)\stackrel{f_{i*}}{\rightarrow}\Hom(P_a^{\mathcal{C}},\mathrm{colim}\ F_i)$
is also universal. In the diagram, the square, the triangles and the large trapeze commute. We obtain the equalities
$$\mu^* \beta c_i=\mu^*f_{i*}=f_{i*}\mu^*= \gamma d_i \mu^*=\gamma
\alpha c_i\ \ \ .$$ The cocone $\{c_i\}$ being universal, and $\gamma
\alpha$ being an isomorphism, it follows that $\mu^*\beta$ is an
isomorphism, so that $\mu^*$ is surjective (so it's an isomorphism). We deduce that the bottom cocone $f_{i*}$ is
universal and $\beta$ is an isomorphism. Finally one obtains the isomorphisms
$$\Hom(T_a,\mathrm{colim}\ F_i)\cong \mathrm{colim}\ \Hom(T_a,F_i)\cong F(a).$$
\end{proof}
\begin{corollary}\label{CT}
The $\mathcal{C}$-modules $T_a$ form a system of projective generators of $\mathcal{C}_T\mathrm{-mod}$. They are called $T$-standard projectives.
\end{corollary}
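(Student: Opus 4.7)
The plan is to deduce both assertions directly from the isomorphism provided by Proposition~\ref{proprietedeyoneda}, which identifies $\Hom_{\cmod}(T_a,F)$ with $F(a)$ whenever $F\in\mathcal{C}_T\mathrm{-mod}$. First I would observe that $\mathcal{C}_T\mathrm{-mod}$ is an abelian subcategory of $\cmod$: since it is closed under limits and colimits, kernels and cokernels of maps between its objects (computed pointwise in $\cmod$) still belong to it, and a sequence in $\mathcal{C}_T\mathrm{-mod}$ is exact iff it is exact in $\cmod$, iff it is exact pointwise.

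For projectivity, let $0\to F'\to F\to F''\to 0$ be a short exact sequence in $\mathcal{C}_T\mathrm{-mod}$. Applying $\Hom_{\cmod}(T_a,-)$ and using Proposition~\ref{proprietedeyoneda} identifies the resulting sequence with
\[
0\to F'(a)\to F(a)\to F''(a)\to 0,
\]
which is exact by the pointwise characterization of exactness. Hence $T_a$ is projective in $\mathcal{C}_T\mathrm{-mod}$. For the generating property, let $F$ be an object of $\mathcal{C}_T\mathrm{-mod}$. Since the standard projectives $P_a^{\mathcal{C}}$ generate $\cmod$ (see Properties~\ref{propreties}), there exists a surjection $\bigoplus_{i\in I}P_{a_i}^{\mathcal{C}}\twoheadrightarrow F$ in $\cmod$. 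Proposition~\ref{proprietedeyoneda} asserts that $\mu^*\colon\Hom_{\cmod}(T_{a_i},F)\to\Hom_{\cmod}(P_{a_i}^{\mathcal{C}},F)$ is an isomorphism, so each component $P_{a_i}^{\mathcal{C}}\to F$ factors uniquely through $\mu_{a_i,-}\colon P_{a_i}^{\mathcal{C}}\twoheadrightarrow T_{a_i}$. Taking the direct sum of these factorizations yields a morphism $\bigoplus_i T_{a_i}\to F$ (with source in $\mathcal{C}_T\mathrm{-mod}$, since the subcategory is stable by colimits) making the triangle
\[
\bigoplus_i P_{a_i}^{\mathcal{C}}\;\xrightarrow{\;\bigoplus\mu\;}\;\bigoplus_i T_{a_i}\;\longrightarrow\;F
\]
commute with the original surjection. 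Because the composite is surjective, so is the second map, and the $T_a$ form a generating system.

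The only point requiring care is verifying that the exact structure of $\mathcal{C}_T\mathrm{-mod}$ coincides with the one induced from $\cmod$, and therefore with the pointwise one. Once this is observed, both parts of the corollary are immediate formal consequences of Proposition~\ref{proprietedeyoneda} combined with the fact that the $P_a^{\mathcal{C}}$ already form a projective generating system of the ambient category $\cmod$.
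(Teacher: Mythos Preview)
Your proof is correct and follows essentially the same approach as the paper's. Both arguments deduce projectivity from Proposition~\ref{proprietedeyoneda} by observing that $\Hom(T_a,-)$ is naturally isomorphic to evaluation at $a$, hence exact; for the generating property, the paper directly builds the tautological epimorphism $\bigoplus_{a,\lambda\in F(a)}T_a\to F$ (sending the $(a,\lambda)$ summand via the unique map $\rho_\lambda$ with $(\rho_\lambda)_a(\mu(\mathbf{1}_a))=\lambda$), whereas you start from an arbitrary surjection $\bigoplus P_{a_i}^{\mathcal{C}}\twoheadrightarrow F$ and factor it through $\bigoplus T_{a_i}$ using the isomorphism $\mu^*$ --- these are minor reorganizations of the same idea.
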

\begin{proof}
According to the property of the previous proposition, the functor $$\Hom_{\mathcal{C}_T\mathrm{-mod}}(T_a,-)$$ of $(\mathcal{C}_T\mathrm{-mod})\mathrm{-mod}$ is exact. Hence $T_a$ is projective. Let $F$ an object of $\mathcal{C}_T\mathrm{-mod}$. Let $\lambda \in F(a)$. There is a unique natural transformation $\rho_{\lambda}:T_a \to F$ such that $(\rho_{\lambda})_a(\mu(\textbf{1}_a))=\lambda$. The natural transformation $$\bigoplus_{a, \lambda \in F(a)}\rho_{\lambda}:  \bigoplus_{a, \lambda \in F(a)}T_a \to F$$
is an epimorphism. It follows that $T_a$'s form a system of generators of $\mathcal{C}_T\mathrm{-mod}$.
\end{proof}

\begin{corollary}\label{py}
Let $F$ and $G$ be two objects of $\mathcal{C}_T\mathrm{-mod}$ and $\mathcal{C}_{T^{op}}\mathrm{-mod}$ respectively. Then
\begin{enumerate}
\item $T\otimes_{\mathcal{C}}F\cong F$
\item $G\otimes_{\mathcal{C}}T\cong G$
\end{enumerate}
\end{corollary}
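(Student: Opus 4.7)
The plan is to prove (1); statement (2) then follows symmetrically by applying (1) to the Yoneda bifunctor $T^{\mathrm{op}}$ for $\mathcal{C}^{\mathrm{op}}$ supplied by Proposition \ref{Y1}(1). The strategy mirrors that of Proposition \ref{proprietedeyoneda}: verify the claimed isomorphism first on the $T$-standard projectives and then extend to all of $\mathcal{C}_T\mathrm{-mod}$ using Corollary \ref{CT}.

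For the base case, fix $F = T_b$. Evaluating the candidate isomorphism at $a$, I need $T^a \otimes_{\mathcal{C}} T_b \cong T_b(a)$. Proposition \ref{Y1}(2), read with the roles of $a$ and $b$ exchanged, provides a natural isomorphism
$$T^a \otimes_{\mathcal{C}} T_b \;\cong\; \Hom_{\cmod}(T_a, T_b)$$
of bifunctors in $(a,b)$, and the Yoneda bifunctor property \eqref{YBI} identifies the right-hand side with $T(b,a) = T_b(a)$. Composing yields the base case $T \otimes_{\mathcal{C}} T_b \cong T_b$, natural in $b$ as well.

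For the extension I invoke Corollary \ref{CT}: since the $T_b$ form a system of projective generators of $\mathcal{C}_T\mathrm{-mod}$, any $F$ in $\mathcal{C}_T\mathrm{-mod}$ can be written as a colimit $F \cong \mathrm{colim}_i\, T_{b_i}$ (concretely, as the cokernel of a morphism between coproducts of $T$-standard projectives). Because the tensor product over $\mathcal{C}$ commutes with colimits in each variable and colimits in $\cmod$ are computed pointwise, I obtain the chain of natural isomorphisms
$$T^a \otimes_{\mathcal{C}} F \;\cong\; \mathrm{colim}_i\,\bigl(T^a \otimes_{\mathcal{C}} T_{b_i}\bigr) \;\cong\; \mathrm{colim}_i\, T_{b_i}(a) \;=\; F(a),$$
natural in $a$, giving the desired isomorphism $T \otimes_{\mathcal{C}} F \cong F$ in $\cmod$.

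The one subtlety worth flagging is that $\mathcal{C}_T\mathrm{-mod}$ was defined as the closure of the $T_a$ under \emph{both} limits and colimits, whereas the tensor product does not preserve limits in general; so one might worry about a separate inductive step for limits, analogous to the one in Proposition \ref{proprietedeyoneda}. This potential obstacle is bypassed entirely by Corollary \ref{CT}, which guarantees that every $F$ already admits a colimit presentation in terms of the $T_{b_i}$, so no limit argument is needed. The only remaining bookkeeping is checking naturality of the Yoneda bifunctor isomorphism and of the colimit exchange, both of which are stated at the level of bifunctors.
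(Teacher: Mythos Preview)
Your argument is correct and follows essentially the same approach as the paper: establish the base case $F=T_b$ via Proposition~\ref{Y1} and then extend to all of $\mathcal{C}_T\mathrm{-mod}$ using that the $T_a$ are generators (Corollary~\ref{CT}) together with the fact that the tensor product commutes with colimits. Your last paragraph is a welcome clarification of a point the paper leaves implicit (it only says ``we might need to iterate''), and your reduction of (2) to (1) via $T^{\mathrm{op}}$ is a slightly cleaner packaging of what the paper does in parallel.
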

\begin{proof}
The tensor product over $\mathcal{C}$ commutes with colimits. The $T_a$'s are generators so a functor of $\mathcal{C}_T\mathrm{-mod}$ (resp. $\mathcal{C}_{T^{op}}\mathrm{-mod})$ is a colimit of the $T_a$'s (resp. $T^a$'s). We might need to iterate \textit{i.e.} take colimit of objects which are themselves colimit of the $T_a$. It suffices therefore to prove the property $1$ (resp. $2$) for $T_a$ (resp. $T^a$). This is a consequence of proposition \ref{Y1}.
\end{proof}
\begin{remark}
We call the isomorphisms of proposition \ref{proprietedeyoneda} and corollary \ref{py} the Yoneda properties.
\end{remark}
\begin{definition}Let $B$ be a category and $A$ a full subcategory. We say that $A$ is epireflective if the inclusion $ i: A \rightarrow B $ admits a left adjoint $ t: B \rightarrow A $ such that the unit of the adjunction is an epimorphism.  We say that $A$ is coreflective if the inclusion $ i: A \rightarrow B $ admits a right adjoint.
\end{definition}
\begin{proposition}\label{epireflexive}
There is a one to one correspondence between equivalence classes of epireflective and coreflective subcategories of $ \mathcal{C} \mathrm{-mod} $ and isomorphism classes of Yoneda bifunctors for $ \mathcal{C} $.
\end{proposition}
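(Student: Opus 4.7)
The plan is to construct explicit maps in both directions and check they are mutually inverse. In one direction, send a Yoneda bifunctor $T$ to the subcategory $\mathcal{C}_T\mathrm{-mod}$ of notation~\ref{categorieyoneda}. In the other, send an epireflective and coreflective subcategory $A \subseteq \cmod$, with reflection $t \dashv i$ and coreflection $i \dashv s$, to the bifunctor $T_A(a,b) := t(P_a^{\mathcal{C}})(b)$, equipped with the surjection $\mu_A$ assembled from the reflection units $\eta_{P_a^{\mathcal{C}}} : P_a^{\mathcal{C}} \twoheadrightarrow t(P_a^{\mathcal{C}})$.

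First I would verify that $\mathcal{C}_T\mathrm{-mod}$ is both epireflective and coreflective in $\cmod$. For the epireflection, take $t(F) := T \otimes_{\mathcal{C}} F$ in the sense of notation~\ref{notations}. Then $t(P_a^{\mathcal{C}}) \cong T_a$ by corollary~\ref{py}, and since $- \otimes_{\mathcal{C}} -$ commutes with colimits, $t(F)$ always lies in $\mathcal{C}_T\mathrm{-mod}$. The adjunction $\Hom(t(F), G) \cong \Hom(F, G)$ for $G \in \mathcal{C}_T\mathrm{-mod}$ reduces, by writing $F$ as a colimit of standard projectives, to the Yoneda property of proposition~\ref{proprietedeyoneda}. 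The unit $\eta_F: F \to t(F)$ is induced by $\mu$ and is pointwise surjective because $\mu$ is. For the coreflection, $\mathcal{C}_T\mathrm{-mod}$ is closed under colimits and has the generating set $\{T_a\}$ by corollary~\ref{CT}, so the right adjoint $s$ exists by the special adjoint functor theorem applied in the Grothendieck category $\cmod$.

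Next I would show that $T_A$ is a Yoneda bifunctor. The surjection condition on $\mu_A$ holds because the reflection unit is an epimorphism, which in $\cmod$ means pointwise surjective. The defining isomorphism reduces through the adjunction $t \dashv i$ to the classical Yoneda identity~\eqref{yonneda}:
$$\Hom_{\cmod}((T_A)_b, (T_A)_a) \cong \Hom_{\cmod}(P_b^{\mathcal{C}}, t(P_a^{\mathcal{C}})) \cong t(P_a^{\mathcal{C}})(b) = T_A(a,b).$$

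Finally I would verify the two composites are the identity. The round trip $T \mapsto \mathcal{C}_T\mathrm{-mod} \mapsto T'$ yields $T'(a,b) = (T \otimes_{\mathcal{C}} P_a^{\mathcal{C}})(b) = T(a,b)$, and the surjection $\mu$ is recovered as the restriction of the unit to standard projectives. For the round trip $A \mapsto T_A \mapsto \mathcal{C}_{T_A}\mathrm{-mod}$, the inclusion $\mathcal{C}_{T_A}\mathrm{-mod} \subseteq A$ is immediate since $(T_A)_a \in A$ and $A$ is closed under limits and colimits (because both adjoints of $i$ exist). For the reverse inclusion, any $F \in A$ satisfies $t(F) \cong F$; applying the colimit-preserving $t$ to a presentation $\bigoplus P_{a_j}^{\mathcal{C}} \to \bigoplus P_{a_i}^{\mathcal{C}} \to F \to 0$ writes $F$ as a colimit of $(T_A)_a$'s. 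The main obstacle I expect is the coreflective part: the left adjoint comes out very concretely as $T \otimes_{\mathcal{C}} -$, but the coreflection resists a similarly clean closed form, so one must rely on an abstract existence argument and then verify carefully that both closure properties of $A$ under limits and colimits are genuinely needed to force equality with $\mathcal{C}_{T_A}\mathrm{-mod}$ rather than mere inclusion.
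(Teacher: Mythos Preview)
Your proof is correct and follows the same architecture as the paper's: build the left adjoint $t$, invoke the special adjoint functor theorem for the right adjoint, define the inverse bifunctor via $t(P_a^{\mathcal{C}})$, and check the two round trips. The one genuine difference is your choice of reflector $t(F) = T \otimes_{\mathcal{C}} F$. The paper instead constructs $t$ by first defining it on the dense subcategory $\mathcal{G}$ of doubled standard projectives (Lemma~\ref{dense}) and then extending by canonical colimits. Your formula is more economical: it bypasses Lemma~\ref{dense} entirely, gives the unit as $F \cong \underline{\underline{\mathcal{C}}} \otimes_{\mathcal{C}} F \to T \otimes_{\mathcal{C}} F$ induced by $\mu$, and makes surjectivity of the unit transparent from right-exactness of the tensor product. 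The paper's route has the mild advantage that $t(P_a^{\mathcal{C}}) = T_a$ holds on the nose rather than up to isomorphism, but this buys little. One small correction: your citation of Corollary~\ref{py} for $t(P_a^{\mathcal{C}}) \cong T_a$ is misplaced, since that corollary applies only to objects already in $\mathcal{C}_T\mathrm{-mod}$; the isomorphism you want is the elementary $(T \otimes_{\mathcal{C}} P_a^{\mathcal{C}})(b) = T^b \otimes_{\mathcal{C}} P_a^{\mathcal{C}} \cong T^b(a) = T_a(b)$ from~\eqref{yonnedat}.
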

We need the following digression before proving the proposition. A subcategory $\mathcal{B}$ of $\mathcal{A}$ is said dense in $\mathcal{A}$ if every object of $\mathcal{A}$ is a colimit of objects of $\mathcal{B}$ in a canonical way (see \protect{\cite[X.6]{categories}}). Let $\mathcal{G}$ be the full subcategory of $\cmod$ whose objects are the functors $P_a^{\mathcal{C}}\oplus P_a^{\mathcal{C}}$ for all object $a$ of $\mathcal{C}$. 
\begin{lemma}\label{dense} 
The subcategory $\mathcal{G}$ is dense in $\cmod$.
\end{lemma}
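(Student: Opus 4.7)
My strategy is to reduce the density of $\mathcal{G}$ to the classical density of the subcategory $\mathcal{P}\subset\cmod$ spanned by the standard projectives $P_a^{\mathcal{C}}$, exploiting the fact that $P_a^{\mathcal{C}}$ is a direct summand of $P_a^{\mathcal{C}}\oplus P_a^{\mathcal{C}}$ via the evident inclusions $i_1,i_2$ and projections $\pi_1,\pi_2$. First I would recall that $\mathcal{P}$ is dense in $\cmod$: this is the enriched Yoneda density theorem and follows directly from the isomorphism~\eqref{yonneda}, which identifies the comma category $\mathcal{P}/F$ with the category of elements of $F$ and realises its canonical colimit as the pointwise value of $F$.

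Fix $F\in\cmod$. To check that the canonical cocone $\{(f_1,f_2):P_a^{\mathcal{C}}\oplus P_a^{\mathcal{C}}\to F\}$ exhibits $F$ as the colimit of the forgetful diagram $U:\mathcal{G}/F\to\cmod$, I would verify the universal property by hand. Given a competing cocone $\tau=\{\tau_{(a,(f_1,f_2))}:P_a^{\mathcal{C}}\oplus P_a^{\mathcal{C}}\to G\}$, I set $\phi_0(f):=\tau_{(a,(f,0))}\circ i_1$ for each $f:P_a^{\mathcal{C}}\to F$. The morphisms $\alpha\oplus 0:(P_a^{\mathcal{C}}\oplus P_a^{\mathcal{C}},(f,0))\to(P_b^{\mathcal{C}}\oplus P_b^{\mathcal{C}},(g,0))$ in $\mathcal{G}/F$, for each $\alpha:P_a^{\mathcal{C}}\to P_b^{\mathcal{C}}$ with $g\alpha=f$, translate the cocone identities of $\tau$ into the statement that $\phi_0$ is a cocone on the canonical diagram $\mathcal{P}/F\to\cmod$. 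Density of $\mathcal{P}$ then yields a unique $\phi:F\to G$ factoring this cocone.

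It remains to check that $\phi$ intertwines $\tau$ with the canonical cocone on $F$, that is $\phi\circ(f_1,f_2)=\tau_{(a,(f_1,f_2))}$. Because $P_a^{\mathcal{C}}\oplus P_a^{\mathcal{C}}$ is a biproduct, it suffices to verify the equality after precomposing with $i_1$ and $i_2$. For $i_1$ one applies the cocone relation of $\tau$ to the endomorphism $\mathrm{id}\oplus 0:(P_a^{\mathcal{C}}\oplus P_a^{\mathcal{C}},(f_1,0))\to(P_a^{\mathcal{C}}\oplus P_a^{\mathcal{C}},(f_1,f_2))$, which reduces the claim to the defining equation of $\phi_0(f_1)$; the case $i_2$ is symmetric. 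Uniqueness of $\phi$ follows by the same density argument applied to the subfamily of objects $(P_a^{\mathcal{C}}\oplus P_a^{\mathcal{C}},(f,0))$.

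The main subtlety --- and the reason one cannot simply quote a cofinality theorem --- is that the natural comparison $\mathcal{P}/F\to\mathcal{G}/F$, $(P_a^{\mathcal{C}},f)\mapsto(P_a^{\mathcal{C}}\oplus P_a^{\mathcal{C}},(f,0))$, does not commute with the underlying forgetful functors to $\cmod$. One must therefore navigate between $\mathcal{P}/F$ and $\mathcal{G}/F$ by hand, using the retraction $\pi_1\circ i_1=\mathrm{id}$ to extract a classical Yoneda cocone from $\tau$, and the coproduct structure of $P_a^{\mathcal{C}}\oplus P_a^{\mathcal{C}}$ to reassemble a morphism satisfying the full cocone identities of $\tau$.
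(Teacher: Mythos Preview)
Your reduction to ``density of $\mathcal{P}$'' is where the argument breaks. In the sense used in the paper (Mac~Lane X.6, i.e.\ ordinary, unenriched density), the full subcategory $\mathcal{P}$ spanned by the single projectives $P_a^{\mathcal{C}}$ is \emph{not} dense in $\cmod$. A cocone on $\mathcal{P}/F\to\cmod$ with vertex $G$ amounts, via Yoneda, to a family of \emph{set} maps $\eta_a\colon F(a)\to G(a)$ satisfying $\eta_a(\lambda x)=\lambda\,\eta_a(x)$ for every $\lambda\in k[\Hom_{\mathcal{C}}(a,a)]$ (and the analogous naturality in $a$); nothing forces $\eta_a$ to be additive. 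Concretely, take $\mathcal{C}$ to be the terminal category and $k=\mathbb{Z}$, so $\cmod=\Ab$ and $\mathcal{P}$ has the single object $\mathbb{Z}$ with endomorphism ring $\mathbb{Z}$. A cocone on $\mathcal{P}/\mathbb{Z}^2$ with vertex $\mathbb{Z}$ is any map $\eta\colon\mathbb{Z}^2\to\mathbb{Z}$ with $\eta(\lambda v)=\lambda\,\eta(v)$; choosing $\eta$ freely on primitive vectors (e.g.\ $\eta(1,0)=\eta(0,1)=1$ but $\eta(1,1)=5$) gives a cocone that does not come from any $\mathbb{Z}$-linear map. Thus the step ``Density of $\mathcal{P}$ then yields a unique $\phi:F\to G$'' fails.

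What is true is \emph{enriched} density of $\mathcal{P}$, but to use it you would have to show that the cocone $\phi_0$ you extract from $\tau$ is additive, i.e.\ $\phi_0(f+g)=\phi_0(f)+\phi_0(g)$. That additivity does follow, but only by exploiting the $2\times2$ matrix morphisms in $\mathcal{G}$: one needs, for instance, the morphism $\bigl(\begin{smallmatrix}1&0\\1&0\end{smallmatrix}\bigr)$ together with the projections and the swap to deduce $\tau_{(a,(f+g,0))}\circ i_1=\tau_{(a,(f,0))}\circ i_1+\tau_{(a,(g,0))}\circ i_1$. This matrix computation is precisely the content of the paper's proof, which works directly with the fully-faithful characterisation and uses naturality of a transformation $M\in\Hom_{\mathcal{G}^{op}\text{-}\mathrm{Set}}(\Hom(-,F),\Hom(-,G))$ with respect to $\bigl(\begin{smallmatrix}0&1\\1&0\end{smallmatrix}\bigr)$, $\bigl(\begin{smallmatrix}1&0\\0&0\end{smallmatrix}\bigr)$, $\bigl(\begin{smallmatrix}0&1\\0&0\end{smallmatrix}\bigr)$ and $\bigl(\begin{smallmatrix}1&1\\0&0\end{smallmatrix}\bigr)$ to force $M$ to be diagonal and $k$-linear. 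In short, the doubling $P_a^{\mathcal{C}}\oplus P_a^{\mathcal{C}}$ is not cosmetic: it is exactly what provides the morphisms needed to recover additivity from Set-valued naturality, and your proposal omits this step.
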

\begin{proof}
The lemma is equivalent to the following assertion: The functor 
$$\begin{array}{rcl}\cmod&\rightarrow &\mathcal{G}^{op}-\mathrm{Set}\\
F&\mapsto& \Hom_{\cmod}(-,F)\end{array}$$
is full and faithful where $\mathcal{G}^{op}-\mathrm{Set}$ is the category of covariant functor from $\mathcal{G}^{op}$ to the category of sets (see \protect{\cite[X.6 page 247]{categories}}). We have to prove that the map
$$\phi:\Hom_{\cmod}(F,G)\rightarrow \Hom_{\mathcal{G}^{op}-\mathrm{Set}}(\Hom_{\cmod}(-,F),\Hom_{\cmod}(-,G)),$$
which assigns to a natural transformation $N:F\rightarrow G$ the natural transformation $N_*:\Hom_{\cmod}(-,F)\rightarrow \Hom_{\cmod}(-,G)$ whose components are the postcomposition by $N$, is an isomorphism. We remark that the component of $N_*$ assigned to $P_a^{\mathcal{C}}\oplus P_a^{\mathcal{C}}$ is given by the map $$T_1\oplus T_2 \mapsto N \circ T_1\oplus N \circ T_2$$ where $$T_1\oplus T_2 \in \Hom_{\cmod}(P_a^{\mathcal{C}}\oplus P_a^{\mathcal{C}},F) \cong \Hom_{\cmod}(P_a^{\mathcal{C}}\oplus P_a^{\mathcal{C}},F).$$ We construct an inverse to $\phi$. Let $M$ an element of $\Hom_{\mathcal{G}^{op}-\mathrm{Set}}(\Hom_{\cmod}(-,F),\Hom_{\cmod}(-,G))$. Consider the component 
$$M_{P_a^{\mathcal{C}}\oplus P_a^{\mathcal{C}}}:\Hom_{\cmod}(P_a^{\mathcal{C}}\oplus P_a^{\mathcal{C}},F) \rightarrow \Hom_{\cmod}(P_a^{\mathcal{C}}\oplus P_a^{\mathcal{C}},G)$$
of $M$ and identify $\Hom_{\cmod}(P_a^{\mathcal{C}}\oplus P_a^{\mathcal{C}},F)$ to $\Hom_{\cmod}(P_a^{\mathcal{C}},F)\oplus \Hom_{\cmod}(P_a^{\mathcal{C}},F)$. By the Yoneda isomorphism, a morphism $P_a^{\mathcal{C}}\oplus P_a^{\mathcal{C}}\rightarrow P_a^{\mathcal{C}}\oplus P_a^{\mathcal{C}}$ is represented by a square matrix with coefficient in $k[\Hom_{\mathcal{C}}(a,a)]$. Using the naturality of the component $M_{P_a^{\mathcal{C}}\oplus P_a^{\mathcal{C}}}$ with respect to the morphisms represented by the matrices
$\begin{pmatrix}
0 & \mathbf{1}_a\\
\mathbf{1}_a& 0
\end{pmatrix},
\begin{pmatrix}
 \mathbf{1}_a&0\\
0& 0
\end{pmatrix},
\begin{pmatrix}
0 & \mathbf{1}_a\\
0& 0
\end{pmatrix}
\text{ and }
\begin{pmatrix}
\mathbf{1}_a & \mathbf{1}_a\\
0& 0
\end{pmatrix}
$
one can see that $M_{P_a^{\mathcal{C}}\oplus P_a^{\mathcal{C}}}=M_{P_a^{\mathcal{C}}}\oplus M_{P_a^{\mathcal{C}}}$ where $$M_{P_a^{\mathcal{C}}}:\Hom_{\cmod}(P_a^{\mathcal{C}},F)\rightarrow \Hom_{\cmod}(P_a^{\mathcal{C}},G)$$ is a morphism of $k$-modules which is natural with respect to $a$. By the Yoneda isomorphism, identify $\Hom_{\cmod}(P_a^{\mathcal{C}},F)$ to $F(a)$ then $M_{P_a^{\mathcal{C}}}$
induces a morphism $\widetilde{M}_a:F(a)\rightarrow G(a)$ natural in $a$. The association $M\mapsto \widetilde{M}$ defines an inverse to $\phi$. 
\end{proof}

\begin{proof}[proof of proposition \ref{epireflexive}]
Let $T$ be a Yoneda bifunctor with a surjection $\mu$. We show that the full subcategory $\mathcal{C}_T\mathrm{-mod}$ of $\mathcal{C}\mathrm{-mod}$ is epireflective and coreflective. We construct a left adjoint $t: \mathcal{C}\mathrm{-mod} \rightarrow
\mathcal{C}_T\mathrm{-mod}$ to the inclusion functor $i:\mathcal{C}_T\mathrm{-mod} \xymatrix{ \ar@{^{(}->}[r] &}\mathcal{C}\mathrm{-mod}$.
First, we define $t$ on the subcategory $\mathcal{G}$ of $\cmod$ by $t(P_a^{\mathcal{C}}\oplus P_a^{\mathcal{C}})=T_a\oplus T_a$. Secondly, for $F=\mathrm{colim}\ F_i$ with $F_i \in \text{Ob}\mathcal{G}$, we define $t(F)$ by $t(F)=\mathrm{colim}\ t(F_i)$ (it is well-defined because the colimit is canonical see the paragraph before lemma \ref{dense}). The functor $t$ is a left adjoint to the inclusion $i$.
Indeed for any object $G$ of $\mathcal{C}_T\mathrm{-mod}$ we have a natural isomorphism
 $$\Hom_{\modcat{\mathcal{C}_T}}(t(F),G)\cong \Hom_{\cmod}(F, i(G))$$ obtained as follows:
$$\begin{array}{cclcc}
\Hom_{\modcat{\mathcal{C}_T}}(t(\mathrm{colim}\ F_{i}), G) &\cong &\mathrm{lim}\ \Hom_{\modcat{\mathcal{C}_T}}(t(F_{i}),
G)&&\\
&\cong &\mathrm{lim}\ \Hom_{\cmod}(F_i, i(G)) &&\\
&\cong &\Hom_{\cmod}(\mathrm{colim}\ F_i, i(G))&&\\
&\cong
&\Hom_{\cmod}(F, i(G))
 .\end{array}$$
The second isomorphism above is a consequence of proposition \ref{proprietedeyoneda} and the fullness of $\modcat{\mathcal{C}_T}$ in $\cmod$.
The unit of the adjunction is the natural transformation given for each object $X$ of $\cmod$ by the map $$\varepsilon_X:X=\mathrm{colim}\ F_i\rightarrow it(X)=\mathrm{colim}\ t(F_{i})$$ induced by the surjections $\mu_{F_i}:F_i\rightarrow t(F_i)$. The functor $\mathrm{colim}$ is right exact then $\varepsilon_X$ is surjective. The isomorphism $\Hom_{\modcat{\mathcal{C}_T}}(T_a,G)\cong \Hom_{\cmod}(P_a^{\mathcal{C}}, i(G))$ for each object $G$ of $\modcat{\mathcal{C}_T}$ implies $t(P_a^{\mathcal{C}})\cong T_a$. Now let us prove that the functor $i$ has a right adjoint. It suffices to show that the dual conditions of the special adjoint functor theorem  \protect{\cite[theorem 3.3.4 page 110]{borceux}} are satisfied for $i$. The category $\modcat{\mathcal{C}_T}$ is cocomplete and the functor $i$ preserves colimits by definition of $\modcat{\mathcal{C}_T}$. It remains to verify that $\modcat{\mathcal{C}_T}$ is co-well-powered (the dual notion of well-powered see \protect{\cite[definition 4.1.2]{borceux}}) and has a generating family. The later condition follows from corollary \ref{CT} and the essential smallness of $\mathcal{C}$. The former is a consequence of \protect{\cite[proposition 4.5.15]{borceux}} (notice that for an object in an abelian category there's a one to one correspondence between its subobjects and its quotient objects thus well-powered implies co-well-powered).
Conversely, let $\mathcal{H}$ an epireflective and coreflective subcategory of
$\mathcal{C}\mathrm{-mod}$. Let $t$ be the left adjoint of the inclusion $i:\mathcal{H} \rightarrow \mathcal{C}\mathrm{-mod}$. We denote $\varepsilon$ the unit of the adjunction. The bifunctor 
$$T^\mathcal{H}:\mathcal C^{op}\times\mathcal C \to k\mathrm{-mod}$$
defined by
$$T^\mathcal{H}(a,b)=t(P_{a}^{\mathcal{C}})(b)$$
with the surjection $\mu_{a,b}=\varepsilon_{P_a^{\mathcal{C}}}(b)$
is a Yoneda bifunctor as shown by the isomorphisms
 $$\begin{array}{cclcccc}
\Hom(T^\mathcal{H}_a,T^\mathcal{H}_b)&=&\Hom(tP_a^{\mathcal{C}}, tP_b^{\mathcal{C}})&&&&\\
&\cong&\Hom(P_a^{\mathcal{C}},i(tP_b^{\mathcal{C}}))&\cong&t(P_b^{\mathcal{C}}))(a)&= &T^\mathcal{H}(b,a)\ .\end{array}$$
The first isomorphism sends a natural transformation $F:T^\mathcal{H}_a \rightarrow T^\mathcal{H}_b$ to $F\circ \varepsilon_{P_a^{\mathcal{C}}}$. The second is the Yoneda isomorphism (see properties \ref{propreties}). Since a left adjoint is unique up to isomorphism then the correspondence $\mathcal{H}\mapsto T^\mathcal{H}$ between the equivalence classes of epireflective and coreflective subcategories of $\cmod$ and the isomorphism classes of Yoneda bifunctor for $\mathcal{C}$ is well defined. By the first part of the proof, the subcategory $\modcat{\mathcal{C}_T}$ corresponds to the bifunctor $T$. In order for this correspondence to be one to one, we must prove that $\mathcal{H}$ and $\modcat{\mathcal{C}_{T^\mathcal{H}}}$ are equivalent. Let us observe first that $\mathcal{H}$ is stable (up to isomorphism) under taking limits and colimits (\textit{i.e} a (co)limit in $\cmod$ of a diagram in $\mathcal{H}$ is isomorphic to the (co)limit computed in $\mathcal{H}$ see the proof of \protect{\cite[proposition 3.5.3]{borceux}} and its dual). The adjunction isomorphism and the Yoneda Lemma provide the isomorphism $\Hom_{\mathcal{H}}(T^\mathcal{H}_a,F)\cong F(a)$ for each object $F$ of $\mathcal{H}$. The same reasoning as in corollary \ref{CT} shows that the $T^\mathcal{H}_a$'s form a system of projective generators of  $\mathcal{H}$ and thus each object of $\mathcal{H}$ is a colimit of the $T^\mathcal{H}_a$'s taking into account that we may need to iterate. It follows that $\mathcal{H}$ contains an isomorphic copy of each object of $\modcat{\mathcal{C}_{T^\mathcal{H}}}$ and therefore the two subcategories are equivalent. 
\end{proof}
\section{Functor of duality}
In this section, we fix a Yoneda bifunctor $T$ for $\mathcal{C}$. We recall that $\mathcal{C}_T\mathrm{-mod}$ is the smallest full subcategory of $\mathcal{C}\mathrm{-mod}$    containing the $T_a$'s and stable under limits and colimits. This is an abelian category with enough projectives. The projectives $T_a$, called $T$-standards, form a system of generators of $\mathcal{C}_T\mathrm{-mod}$.
\bull{Definitions.}
Let $R$ be a unitary ring and $M$ a left $R$-module. The dual of $M$ is the right $R$-module $M':=\Hom_{\modcat{R}}(M,R)$. The functor of duality $D$ defined below is a generalization of the duality operator $M\mapsto M'$ where $R$ is replaced by a category $\mathcal{C}$.
\par
We denote $$D:(\mathcal{C}\mathrm{-mod})^{op} \rightarrow \mathcal{C}^{op}\mathrm{-mod}$$ 
the functor that sends a left $\mathcal{C}$-module $F$ to the right $\mathcal{C}$-module $DF:=\mathcal{H}om(F,T).$
We refer to notations~\ref{notations} for the definition of $\mathcal{H}om(F,T)$ \textit{i.e.} for an object $x$ of $\mathcal{C}$, the equality
$$DF(x)=\Hom_{\mathcal{C}\mathrm{-mod}}(F,T_x)$$
where $T_x$ is the functor $T(x,-)$ defines $\mathcal{H}om(F,T)$ on objects.
\par
The functor of duality $D$ extends to the category $(\Ch(\mathcal{C}\mathrm{-mod}))^{op}$ as follows. 
To a complex $X=(X^n,d_X^n)$ of $\Ch(\mathcal{C}\mathrm{-mod})$, the functor 
$$D:(\Ch(\mathcal{C}\mathrm{-mod}))^{op} \rightarrow \Ch(\mathcal{C}^{op}\mathrm{-mod})$$
 associates the complex $D(X)$ defined by 
$$D(X)^n=D(X^{-n})\ \ \text{   and   }\ \  d^n_{D(X)}=(-1)^{n+1}D(d_X^{-(n+1)}).$$
\bull{Behavior of $D$ on projectives of $\mathcal{C}_T\mathrm{-mod}$.}
\begin{example}\label{ex1}
For any object $a$ of $\mathcal{C}$, we have a natural isomorphism
$$D(T_a)\cong T^a.$$ Indeed, one has natural isomorphisms, as a consequence of the Yoneda properties,
 $$D(T_a)(x)=\Hom_{\mathcal{C}\mathrm{-mod}}(T_a,T_x)\cong T_x(a)=T^a(x)$$
for all $x\in \text{Ob}\mathcal{C}$. The equality $D(T_a)=T^a$ follows.
\end{example}
The previous example and the fact that $D$ commutes with limits imply that the functor $D$ restricts to a functor 
$$D:(\mathcal{C}_T\mathrm{-mod})^{op}\rightarrow \mathcal{C}_{T^{op}}\mathrm{-mod}.$$ 
\begin{definition}An object of $\mathcal{C}_T\mathrm{-mod}$ is said of finite type if it is a quotient of a finite sum of $T$-standard projectives.
\end{definition}
\begin{lemma}
Let $P$ be a projective of finite type of $\mathcal{C}_T\mathrm{-mod}$. The object $D(P)$ of $\mathcal{C}_{T^{op}}\mathrm{-mod}$ is a projective of finite type.
\end{lemma}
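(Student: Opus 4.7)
The plan is to reduce to the case of finite direct sums of $T$-standard projectives and then transport that finiteness across $D$ using the computation $D(T_a)\cong T^a$ from example~\ref{ex1}.

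First, since $P$ is of finite type in $\mathcal{C}_T\mathrm{-mod}$, by definition there is an epimorphism
$$\pi:\bigoplus_{i=1}^n T_{a_i}\twoheadrightarrow P$$
for some finite family of objects $a_1,\dots,a_n$ of $\mathcal{C}$. The projectivity of $P$ in $\mathcal{C}_T\mathrm{-mod}$ yields a section of $\pi$, exhibiting $P$ as a direct summand of $\bigoplus_{i=1}^n T_{a_i}$.

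Next I would apply $D$. Because $D$ is the functor $F\mapsto \mathcal{H}om(F,T)$ and $\Hom_{\cmod}(-,-)$ commutes with limits in its first (contravariant) variable, the functor $D$ sends colimits in $\mathcal{C}_T\mathrm{-mod}$ to limits in $\mathcal{C}_{T^{op}}\mathrm{-mod}$; in particular, finite direct sums are preserved (finite coproducts coincide with finite products in any abelian category). Combined with example~\ref{ex1}, this gives
$$D\!\left(\bigoplus_{i=1}^n T_{a_i}\right)\cong \bigoplus_{i=1}^n D(T_{a_i})\cong \bigoplus_{i=1}^n T^{a_i}.$$
Moreover, $D$ being additive, it sends the split inclusion of $P$ into $\bigoplus_{i=1}^n T_{a_i}$ to a split surjection $\bigoplus_{i=1}^n T^{a_i}\twoheadrightarrow D(P)$, so $D(P)$ is a direct summand of $\bigoplus_{i=1}^n T^{a_i}$.

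To conclude, I would invoke the fact that the $T^a$ are exactly the $T^{op}$-standard projectives of $\mathcal{C}_{T^{op}}\mathrm{-mod}$ (via proposition~\ref{Y1}(1), $T^{op}$ is a Yoneda bifunctor for $\mathcal{C}^{op}$, whose standard projectives are $(T^{op})_a=T^a$). Being a direct summand of a finite sum of such standard projectives, $D(P)$ is itself a projective of finite type in $\mathcal{C}_{T^{op}}\mathrm{-mod}$. The only point requiring a moment of care is the interchange of $D$ with finite direct sums, but this is immediate since $D$ turns the finite coproduct $\bigoplus T_{a_i}$ into the finite product $\prod D(T_{a_i})$, which equals the finite coproduct.
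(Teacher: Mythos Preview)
Your proof is correct and follows essentially the same approach as the paper: reduce to the case $P=T_a$ via the direct-summand argument and additivity of $D$, then invoke example~\ref{ex1}. You have simply made explicit the details (the section from projectivity, the finite-sum preservation, and the identification of the $T^a$ as the $T^{op}$-standard projectives via proposition~\ref{Y1}(1)) that the paper leaves implicit in its one-line appeal to additivity.
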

\begin{proof}
If $P$ is a projective of finite type then $P$ is a direct factor of a finite sum of $T$-standard projectives $F$,
\textit{i.e.} there is a $\mathcal{C}$-module $Q$ such that $P\oplus Q=F$.
By additivity of the functor $D$, it suffices to prove the lemma for $P=T_a$, case corresponding to the example~\ref{ex1}.
\end{proof}
\begin{notations}
We denote $Proj(\mathcal{C}_T\mathrm{-mod})$ the full subcategory of $\mathcal{C}_T\mathrm{-mod}$ whose objects are the projectives of finite type.
\end{notations}
\begin{proposition}\label{equiv}
The restriction $$D: (Proj(\mathcal{C}_T\mathrm{-mod}))^{op} \rightarrow Proj(\mathcal{C}_{T^{op}}\mathrm{-mod})$$ is an equivalence of categories. 
\end{proposition}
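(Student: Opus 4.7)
The plan is to build a quasi-inverse by invoking the symmetric Yoneda bifunctor $T^{op}$ and checking the unit of the double-duality on $T$-standards, then extending by additivity and splitting of idempotents.

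By Proposition~\ref{Y1}(1), $T^{op}$ is a Yoneda bifunctor for $\mathcal{C}^{op}$, so the construction of Section~4 applied to $T^{op}$ yields a second duality
$$ D' := D^{T^{op}} : (\mathcal{C}_{T^{op}}\mathrm{-mod})^{op} \to \mathcal{C}_T\mathrm{-mod}, $$
which, by the dual of example~\ref{ex1}, sends $T^a$ to $T_a$. The preceding lemma (applied to $T^{op}$) shows that $D'$ restricts to $\mathrm{Proj}(\mathcal{C}_{T^{op}}\mathrm{-mod})^{op} \to \mathrm{Proj}(\mathcal{C}_T\mathrm{-mod})$. I will show that $(D')^{op}$ is quasi-inverse to $D$.

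\medskip

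\textbf{Step 1: Construction of the unit.} For $F \in \mathcal{C}_T\mathrm{-mod}$, $x \in \mathrm{Ob}(\mathcal{C})$ and $\lambda \in F(x)$, define a morphism $\widetilde{\lambda}: DF \to T^x$ of $\mathcal{C}^{op}\mathrm{-mod}$ by
$$ \widetilde{\lambda}_y : \Hom_{\cmod}(F, T_y) \to T(y,x), \qquad \phi \mapsto \phi_x(\lambda). $$
Naturality in $y$ follows from naturality of $\phi$. The assignment $\lambda \mapsto \widetilde{\lambda}$ defines a morphism
$$ \eta_F : F \longrightarrow D'(DF) $$
of $\mathcal{C}_T\mathrm{-mod}$, natural in $F$.

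\medskip

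\textbf{Step 2: $\eta$ is an isomorphism on $T$-standards.} By example~\ref{ex1}, $D(T_a) = T^a$; by its analogue for $T^{op}$, $D'(T^a) = T_a$. Tracing the identifications, $\eta_{T_a}(x) : T(x,a) \to \Hom_{\mathcal{C}^{op}\mathrm{-mod}}(T^a, T^x)$ sends $\lambda$ to the natural transformation whose value at $\mu_{a,a}(\mathbf{1}_a)$ is $\lambda$; this is exactly the inverse of the Yoneda isomorphism of Proposition~\ref{Y1}(1), hence an isomorphism.

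\medskip

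\textbf{Step 3: Extension to $\mathrm{Proj}$.} The composite $D' \circ D^{op}$ is additive, so $\eta_F$ is an isomorphism when $F$ is any finite direct sum $T_{a_1}\oplus\cdots\oplus T_{a_n}$. If $P$ is a projective of finite type, then, being projective, $P$ is a direct summand of such a sum, say $F \cong P \oplus P'$. Additivity gives $\eta_F \cong \eta_P \oplus \eta_{P'}$; since $\eta_F$ is an isomorphism, so is $\eta_P$.

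\medskip

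\textbf{Step 4: Counit and conclusion.} The same argument applied to $T^{op}$ yields a natural isomorphism $\eta': \mathrm{id} \Rightarrow D \circ (D')^{op}$ on $\mathrm{Proj}(\mathcal{C}_{T^{op}}\mathrm{-mod})$. The pair $(\eta, \eta')$ exhibits $(D')^{op}$ as a quasi-inverse of $D$, proving that $D$ is an equivalence of categories.

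\medskip

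I expect the only real bookkeeping to be \textbf{Step 2}: untangling which Yoneda identifications ($\varphi$ of~\eqref{YBI} for $T$ and for $T^{op}$) land the element $\lambda \in T(x,a)$ back on itself after applying $\eta_{T_a}$. Once this is verified, Steps 3--4 are formal consequences of additivity and of the splitting of idempotents in the abelian category $\mathcal{C}_T\mathrm{-mod}$.
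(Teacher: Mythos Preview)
Your proof is correct and follows essentially the same approach as the paper's. Both construct the quasi-inverse $D_{T^{op}}$ (your $D'$), verify the unit of double-duality on $T$-standards via the Yoneda isomorphism of Proposition~\ref{Y1}(1), and extend to arbitrary finite-type projectives by additivity and retracts. The paper additionally isolates the full faithfulness of $D$ as a separate statement~\eqref{star} before constructing the unit, whereas you build the unit $\eta_F$ directly by the evaluation formula $\phi \mapsto \phi_x(\lambda)$; unwinding the identifications, your $\widetilde{\lambda}$ is exactly $D(\rho_\lambda)$ where $\rho_\lambda : T_x \to P$ is the Yoneda map, so the two constructions coincide on the nose.
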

\begin{proof}
Let $P$ and $Q$ be two projectives of finite type of $\mathcal{C}_T\mathrm{-mod}$ then the map 
\begin{equation}\label{star}
\Hom_{\mathcal{C}_T\mathrm{-mod}}(P,Q) \stackrel{D}{\rightarrow}\Hom_{\mathcal{C}_{T^{op}}\mathrm{-mod}}(D(Q),D(P)) \tag{$\star$}
\end{equation} 
is a natural isomorphism in two variables. 
Indeed, by additivity of $D$ it suffices to check ~\eqref{star} for $P=T_a$ and $Q=T_b$. 
In this case, the assertion is a consequence of the natural isomorphisms
$$\Hom_{\mathcal{C}_T\mathrm{-mod}}(T_a,T_b)\cong T(b,a) \cong \Hom_{\mathcal{C}_{T^{op}}\mathrm{-mod}}(T^b,T^a).$$
The functor $D$ has an inverse which we denote by $D_{T^{op}}$ defined for an object $G$ of $\mathcal{C}_{T^{op}}\mathrm{-mod}$ by $D_{T^{op}}G= \Hom(G,T^{op})$. 
Indeed, the composition
$$P(x)\cong \Hom_{\mathcal{C}_T\mathrm{-mod}}(T_x,P)\stackrel{D}{\rightarrow}\Hom_{\mathcal{C}_{T^{op}}\mathrm{-mod}}(DP,T^x)\cong D_{T^{op}}\circ DP(x)$$
defines an isomorphism $I\cong D_{T^{op}}\circ D$.
\end{proof}
We define a natural map 
$$\phi_{G}:D(F)\otimes_{\mathcal{C}}G \rightarrow \Hom_{\mathcal{C}_T\mathrm{-mod}}(F,G)$$
 as follows. Let $x\in \text{Ob}\mathcal{C}$.  Then $\phi$ is induced by the composition
$$\begin{array}{ccl}
D(F)(x)\otimes G(x)
&\stackrel{\approx}{\rightarrow}&\Hom_{\mathcal{C}_T\mathrm{-mod}}(F,T_x)\otimes
\Hom_{\mathcal{C}_T\mathrm{-mod}}(T_x,G)\\
& \rightarrow& \Hom_{\mathcal{C}_T\mathrm{-mod}}(F,G)\ .\end{array}$$
\medskip
Let $R$ be a unitary ring, $M$ a right $R$-module and $N$ a left $R$-module. If $M$ is projective of finite type then we have a natural isomorphism \begin{equation}\label{dua0} \Hom_{\modcat{R}}(M,N) \cong M'\otimes_{R}N.\end{equation} 
The following proposition generalizes the latter isomorphism when $R$ is replaced by a category $\mathcal{C}$.

\begin{proposition}\label{dua1}
Let $P$ be a projective of finite type of $\mathcal{C}_T\mathrm{-mod}$. Then the natural  transformation between functors of $(\mathcal{C}_T\mathrm{-mod})\mathrm{-mod}$
\begin{equation} D(P)\otimes_{\mathcal{C}}- \stackrel{\approx}{\rightarrow}\Hom_{\mathcal{C}_T\mathrm{-mod}}(P,-) .\label{di}\end{equation} 
is an isomorphism.
\end{proposition}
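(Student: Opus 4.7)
The plan is to reduce to the case $P=T_a$ by additivity, and then recognize $\phi_{(-)}$ as the composition of two Yoneda-type isomorphisms already established in the text.

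Both sides of~\eqref{di} are additive functors of $P$: indeed $D$ is additive, hence so is $D(-)\otimes_{\mathcal{C}}G$, and $\Hom_{\modcat{\mathcal{C}_T}}(-,G)$ is additive in the first variable. By definition, a projective of finite type is a direct factor of a finite sum $F=T_{a_1}\oplus\cdots\oplus T_{a_n}$, so it suffices to establish~\eqref{di} on $F$ and invoke functorial retract. This in turn reduces, again by additivity, to the case $P=T_a$.

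For $P=T_a$, I would compute both sides with the Yoneda properties of $T$. On the left, example~\ref{ex1} gives $D(T_a)\cong T^a$, and corollary~\ref{py} (together with the pointwise description in notations~\ref{notations}) gives
$$D(T_a)\otimes_{\mathcal{C}}G\;=\;T^a\otimes_{\mathcal{C}}G\;\cong\;G(a).$$
On the right, proposition~\ref{proprietedeyoneda} gives
$$\Hom_{\modcat{\mathcal{C}_T}}(T_a,G)\;\cong\;G(a).$$
The remaining point is to check that the natural transformation $\phi_G$ defined just before the proposition indeed coincides with this identification. Unpacking the definition, $\phi_G$ on $T_a$ is induced at an object $x$ by sending $S\otimes t\in D(T_a)(x)\otimes G(x) = \Hom(T_a,T_x)\otimes \Hom(T_x,G)$ to the composite $\rho_t\circ S\in\Hom(T_a,G)$, where $\rho_t:T_x\to G$ is the unique morphism such that $(\rho_t)_x(\mu_{x,x}(\mathbf{1}_x))=t$. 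Evaluating the resulting element of $\Hom(T_a,G)\cong G(a)$ at $\mu_{a,a}(\mathbf{1}_a)$ and comparing to the isomorphism $T^a\otimes_{\mathcal{C}}G\cong G(a)$ of corollary~\ref{py}, one sees that the two agree; this is essentially the same bookkeeping used in the proof of proposition~\ref{Y1}(2).

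The main obstacle I anticipate is precisely this last verification: one must trace $\phi_G$ through several canonical isomorphisms (Yoneda for $T$, the identifications $D(T_a)\cong T^a$ and $T^a\otimes_{\mathcal{C}}G\cong G(a)$) while keeping track of $\mu$. Once this is checked for $P=T_a$, naturality in $G$ is automatic because every map in the chain is natural in $G$, and the additivity argument above then concludes the proposition for all projectives of finite type.
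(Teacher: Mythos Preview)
Your proposal is correct and follows essentially the same approach as the paper: reduce to $P=T_a$ by additivity of both sides in $P$, then invoke the Yoneda properties $D(T_a)\cong T^a$, $\Hom_{\modcat{\mathcal{C}_T}}(T_a,G)\cong G(a)$, and $T^a\otimes_{\mathcal{C}}G\cong G(a)$. Your additional paragraph checking that the specific map $\phi_G$ realizes these identifications is a point the paper's proof leaves implicit, so your version is in fact slightly more careful.
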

\begin{proof}
According to the Yoneda properties, we have the following isomorphisms
$$D(T_a)\cong T^a \text{ and
}\Hom_{\mathcal{C}_T\mathrm{-mod}}(T_a,G)\cong G(a) \cong
T^a\otimes_{\mathcal{C}}G.$$ It follows that the statement is true for $P=T_a$.
If $P$ is a direct summand in a finite sum of $T$-standard projectives, the proposition is derived from the additivity of $D$.
\end{proof}
\bull{Behavior of $D$ on complexes of projectives and the derived functor of $D$.}
\begin{proposition}
Let $X=(X^n,d_X^n)$ be a complex of projectives of finite type of ${{\Ch}}^b(\mathcal{C}_T\mathrm{-mod})$ and $Y=(Y^n,d_Y^n)$ a complex of\linebreak ${{\Ch}}(\mathcal{C}_{T}\mathrm{-mod})$. Then we have a natural isomorphism
$$\Hom(X,Y) \cong D(X)\otimes Y.$$ 
\end{proposition}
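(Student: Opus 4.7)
The plan is to prove the isomorphism degree-by-degree and then verify compatibility with differentials; Proposition~\ref{dua1} carries essentially all the content, and the boundedness hypothesis on $X$ is used only to convert an infinite product into a finite direct sum so that Proposition~\ref{dua1} can be applied termwise.

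At a fixed degree $n$, using the reindexing $p=-i$ and $q=i+n$, boundedness of $X$ gives
$$\left(\Hom(X,Y)\right)^n = \prod_i \Hom_{\modcat{\mathcal{C}_T}}(X^i,Y^{i+n}) = \bigoplus_{p+q=n} \Hom_{\modcat{\mathcal{C}_T}}(X^{-p},Y^q).$$
Since each $X^{-p}$ is a projective of finite type in $\modcat{\mathcal{C}_T}$, Proposition~\ref{dua1} supplies a natural isomorphism $D(X^{-p}) \otimes_{\mathcal{C}} Y^q \cong \Hom_{\modcat{\mathcal{C}_T}}(X^{-p},Y^q)$, and summing over $p+q=n$ yields
$$\left(D(X)\otimes Y\right)^n = \bigoplus_{p+q=n} D(X)^p\otimes_{\mathcal{C}} Y^q \stackrel{\sim}{\longrightarrow} \left(\Hom(X,Y)\right)^n,$$
using that $D(X)^p = D(X^{-p})$ by definition of $D$ on complexes.

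For compatibility with differentials, on the Hom-complex one has $d^n f = d_Y\circ f + (-1)^{n+1} f\circ d_X$, whereas on $D(X)\otimes Y$ the differential on a decomposable tensor $\xi\otimes y$, with $\xi\in D(X)^p$ and $y\in Y^q$, is $\xi\otimes d_Y^q y + (-1)^q d_{D(X)}^p\xi \otimes y$. Invoking $d_{D(X)}^p = (-1)^{p+1} D(d_X^{-(p+1)})$, the ``$d_X$-term'' picks up total sign $(-1)^{p+q+1} = (-1)^{n+1}$, exactly matching the Hom side, while the ``$d_Y$-term'' passes through the componentwise $\phi$ of Proposition~\ref{dua1} with no sign. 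The main (and essentially only) obstacle is this sign bookkeeping; it is mechanical but requires care, and it is also where the convention $d_{D(X)}^p = (-1)^{p+1} D(d_X^{-(p+1)})$ fixed just before the proposition proves its worth.
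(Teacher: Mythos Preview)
Your proof is correct and follows exactly the approach of the paper, which simply states that the result is a consequence of Proposition~\ref{dua1} together with the compatibility of the isomorphism with the differentials. You have merely unpacked both steps in detail: using boundedness of $X$ to replace the product by a finite direct sum so that Proposition~\ref{dua1} applies termwise, and then carrying out the sign check showing that the convention $d_{D(X)}^p=(-1)^{p+1}D(d_X^{-(p+1)})$ makes the two differentials match.
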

\begin{proof}
This is a consequence of the isomorphism of the proposition~\ref{dua1}  and the compatibility of differentials with this isomorphism.
\end{proof}
Recall that the homotopy category $\mathcal{K}^*(\mathcal{A})$
is obtained from the category $\Ch^*(\mathcal{A})$ by
identifying homotopically equivalent morphisms, \textit{i.e.}by setting $$\Hom_{\mathcal{K}(\mathcal{A})}(X,Y):= H^0(\Hom(X,Y)),$$
and that the derived category $\mathcal{D}^*(\mathcal{A})$ is the triangulated category obtained from the homotopy category $\mathcal{K}^*(\mathcal{A})$ by inverting quasi-isomorphisms.
\par
The bifunctors $\Hom(-,-)$ and  $-\otimes-$ induce the derived bifunctors (see \protect{
\cite[page 91 and 98]{Borel}})
$$R\Hom:(\mathcal{D}^-(\mathcal{C}\mathrm{-mod}))^{op}\times \mathcal{D}(\mathcal{C}\mathrm{-mod})\rightarrow \mathcal{D}(k\mathrm{-mod}),$$
and 
$$-\stackrel{L}{\otimes}-:\mathcal{D}^{-}(\mathcal{C}^{op}\mathrm{-mod})\times \mathcal{D}^-(\mathcal{C}\mathrm{-mod})\rightarrow \mathcal{D}(k\mathrm{-mod}).$$
The functor $D$ admits a derived functor
$$RD: (\mathcal{D}^-(\mathcal{C}_T\mathrm{-mod}))^{op} \rightarrow \mathcal{D}^{+}(\mathcal{C}_{T^{op}}\mathrm{-mod}).$$
To calculate the derived dual of a complex $X$ of $\mathcal{D}^-(\mathcal{C}_T\mathrm{-mod})$, 
we choose a projective resolution of $X$ and we apply the functor $D$. 
Let $F$ be an object of $\mathcal{C}_T\mathrm{-mod}$ and $x$ an object of $\mathcal{C}$. 
We have the equality:
$$\operatorname{H}^i(RD(F))(x)=\operatorname{Ext}_{\modcat{\mathcal{C}_T}}^i(F,T_x).$$
 In the sequel, we set $D^i(X)=H^i(RD(X))$.
\begin{definition}
A complex $X$ of $\mathcal{D}(\mathcal{C}_T\mathrm{-mod})$ is perfect if it is  quasi-isomorphic to a bounded complex of finite type projectives.\end{definition}
\begin{proposition}\label{dua3}
Let $X$ and $Y$ be two complexes of $\mathcal{D}^-(\mathcal{C}_T\mathrm{-mod})$. If $X$ is perfect, then we have a natural isomorphism
$$R\Hom(X,Y) \cong RD(X)\stackrel{L}{\otimes} Y.$$ 
\end{proposition}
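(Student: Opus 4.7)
The strategy is to reduce the derived statement to the chain-level isomorphism of the preceding proposition by exploiting the perfectness of $X$.

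First I would choose a quasi-isomorphism $P \stackrel{\sim}{\to} X$ with $P$ a bounded complex of projectives of finite type in $\mathcal{C}_T\mathrm{-mod}$; this is possible by the definition of a perfect complex. Then on the left-hand side, because $P$ consists of projectives and is bounded, the standard machinery of derived functors gives
\begin{equation*}
R\Hom(X,Y) \;\cong\; \Hom(P,Y)
\end{equation*}
in $\mathcal{D}(k\mathrm{-mod})$, with no need to resolve $Y$.

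Next I would analyse the right-hand side. Since $P$ is already a bounded complex of projectives, the derived functor $RD$ is computed pointwise, so $RD(X) \cong D(P)$ in $\mathcal{D}^+(\mathcal{C}_{T^{op}}\mathrm{-mod})$. By the lemma preceding proposition~\ref{equiv}, each $D(P^n)$ is a projective of finite type in $\mathcal{C}_{T^{op}}\mathrm{-mod}$, so $D(P)$ is itself a bounded complex of projectives of finite type. Consequently the derived tensor product can be computed with no further resolution:
\begin{equation*}
RD(X) \stackrel{L}{\otimes} Y \;\cong\; D(P)\otimes Y.
\end{equation*}

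Now I would apply the previous proposition to $P$ (a bounded complex of finite-type projectives) and $Y$, obtaining a natural isomorphism of complexes $\Hom(P,Y) \cong D(P)\otimes Y$, and combine this with the two identifications above to conclude. The one point that needs a little care, and which I view as the main technical obstacle, is the well-definedness and naturality of the composite isomorphism in the derived category: one must check that the isomorphism constructed in the previous proposition is compatible with quasi-isomorphisms between bounded complexes of finite-type projectives representing $X$ (so that the resulting map is independent of the choice of $P$) and functorial in $Y$. This reduces to the observation that $\Hom(P,-)$ and $D(P)\otimes(-)$ are both exact functors on $\mathcal{C}_T\mathrm{-mod}$ when $P$ is a bounded complex of finite-type projectives, so they preserve quasi-isomorphisms; together with the naturality of the iso in proposition~\ref{dua1}, this yields a well-defined natural isomorphism in $\mathcal{D}(k\mathrm{-mod})$.
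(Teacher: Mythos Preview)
Your proposal is correct and follows exactly the same route as the paper's proof: replace $X$ by a bounded complex $P$ of finite-type projectives, compute $R\Hom(X,Y)$ as $\Hom(P,Y)$ and $RD(X)\stackrel{L}{\otimes}Y$ as $D(P)\otimes Y$, and invoke the preceding chain-level isomorphism. The paper's argument is terse and omits the justification that $D(P)$ is again a bounded complex of finite-type projectives (so no further resolution is needed for the derived tensor product) and the naturality discussion you supply; your elaboration of these points is a welcome addition but does not change the underlying strategy.
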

\begin{proof}
Let $P$ a complex of projectives of finite type of\linebreak$\Ch^b(\mathcal{C}_T\mathrm{-mod})$ quasi-isomorphic to $X$. Then we have the isomorphisms
$$R\Hom(X,Y)= \Hom(P,Y)\cong D(P)\otimes Y=RD(X)\stackrel{L}{\otimes} Y.$$
\end{proof}
\bull{Illustration: Serre duality.}
Let $k$ be a field. For a $k$-vector space $V$, we put $V':=\Hom_k(V,k)$.
\begin{definition}
Let $\mathcal{A}$ be a $k$-linear category. 
A Serre duality for $\mathcal{A}$ is an endofunctor $S:\mathcal{A}\rightarrow \mathcal{A}$
equipped with a natural isomorphism
$$\Phi_{a,b}:\Hom_{\mathcal{A}}(b,Sa) \rightarrow \Hom_{\mathcal{A}}(a,b)'$$
for all objects $a,b$ of $\mathcal{A}$. [Compare to \cite{RV}]
\end{definition}
\begin{remark}\label{unicitedualitedeserre}~~
\begin{enumerate}
\item The functor $S$ is a Serre duality for $\mathcal{A}$ if and only if
the object $Sx$ is an object representing the functor $\Hom_{\mathcal{A}}(x,-)^*$ of $\mathcal{A}^{op}\mathrm{-mod}$ for all object $x$ of $\mathcal{A}$.
 \item The Yoneda embedding
$$\begin{array}{crcl}
&\mathcal{A} &\rightarrow &\mathcal{A}^{op}\mathrm{-mod}\\
&c&\mapsto& \Hom_{\mathcal{A}}(-,c)
\end{array}$$
is fully faithful. Hence the uniqueness of a Serre duality up to isomorphism.
\item The Yoneda lemma provides a natural isomorphism
$$\Hom_{\mathcal{A}^{op}\mathrm{-mod}}(\Hom_{\mathcal{A}}(-,Sa),\Hom_{\mathcal{A}}(a,-)')\cong \Hom_{\mathcal{A}}(a,Sa)'.$$
We call fundamental class the element $\Phi_a \in \Hom_{\mathcal{A}}(a,Sa)'$ corresponding to the isomorphism $\Phi_{a,b}$. The isomorphism $\Phi_{a,b}$ is right adjoint (in the sense of the theory of bilinear forms) of the bilinear form 
$$\Hom_{\mathcal{A}}(a,b)\times \Hom_{\mathcal{A}}(b,Sa)\rightarrow \Hom_{\mathcal{A}}(a,Sa)\stackrel{\Phi_a}{\rightarrow}k.$$
\end{enumerate}
\end{remark}
Let $T$ a Yoneda bifunctor for $\mathcal{C}$. Assume that $\mathcal{C}$ is $k$-linear.
\begin{notations}~~
\begin{itemize}
\item We denote $\mathcal{D}_{\mathrm{perf}}(\mathcal{C}_T\mathrm{-mod})$ the full subcategory of $\mathcal{D}(\mathcal{C}_T\mathrm{-mod})$ whose objects are perfect complexes.
\item We denote $F'$ the postcomposition of $F$ by the duality operation $V\mapsto V':=\Hom_k(V,k)$.
\end{itemize}
\end{notations}
\begin{proposition}\label{dualitedeserre}
Let $X$ and $Y$ be two complexes of  $\mathcal{D}_{\mathrm{perf}}(\mathcal{C}_T\mathrm{-mod})$. We have a natural isomorphism
$$R\Hom(X,Y)'\cong R\Hom(Y, \text{$RD(X)'$}).$$
In particular, the functor $RD(-)'$ is a Serre duality for \linebreak$\mathcal{D}_{\mathrm{perf}}(\mathcal{C}_T\mathrm{-mod})$.
\end{proposition}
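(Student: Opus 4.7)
The plan is to deduce the asserted isomorphism from Proposition~\ref{dua3} combined with the tensor-Hom adjunction recalled in the preliminaries, and then recover the Serre duality statement by taking $H^0$.

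First, since $X$ is perfect, Proposition~\ref{dua3} yields a natural isomorphism
$$R\Hom(X,Y) \cong RD(X) \stackrel{L}{\otimes} Y$$
in $\mathcal{D}(k\mathrm{-mod})$. Applying the $k$-linear duality $(-)' = \Hom_k(-,k)$, which is exact since $k$ is a field and therefore descends unchanged to the derived category, we obtain
$$R\Hom(X,Y)' \cong \left(RD(X) \stackrel{L}{\otimes} Y\right)'.$$

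Next I would derive the tensor-Hom adjunction $\Hom_k(F \otimes_{\mathcal{C}} G, k) \cong \Hom_{\cmod}(G, F')$, which is the special case $M = k$ of the adjunction recalled in the preliminaries. Concretely, I pick a bounded complex $P^{\bullet}$ of projectives of finite type representing $X$, so that $RD(X)$ is represented by $D(P^{\bullet})$, itself a bounded complex of projectives of finite type in $\mathcal{C}_{T^{op}}\mathrm{-mod}$ by the lemma preceding Proposition~\ref{equiv}. Since each term of $P^{\bullet}$ is a $T$-standard projective of finite type, $D(P^{\bullet}) \otimes_{\mathcal{C}} Y$ already computes the derived tensor product; and since $(-)'$ is exact, it commutes with both $H^*$ and termwise formation of the tensor product. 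Applying the adjunction termwise then identifies $\left(D(P^{\bullet}) \otimes_{\mathcal{C}} Y\right)'$ with $\Hom(Y, D(P^{\bullet})')$, which computes $R\Hom(Y, RD(X)')$. Combining the steps gives the natural isomorphism $R\Hom(X,Y)' \cong R\Hom(Y, RD(X)')$, and taking $H^0$ yields the Serre duality isomorphism $\Hom_{\mathcal{D}_{\mathrm{perf}}}(X,Y)' \cong \Hom_{\mathcal{D}_{\mathrm{perf}}}(Y, RD(X)')$ on morphism groups in the derived category.

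The main obstacle I anticipate is verifying that $RD(-)'$ genuinely defines an endofunctor of $\mathcal{D}_{\mathrm{perf}}(\mathcal{C}_T\mathrm{-mod})$, rather than landing in some larger category. For $X$ perfect represented by $P^{\bullet}$ with each $P^i$ a finite sum of $T$-standard projectives $T_a$, Example~\ref{ex1} gives $D(P^i)$ as a finite sum of $T^a$'s in $\mathcal{C}_{T^{op}}\mathrm{-mod}$, and then $(T^a)'$ has to be identified with a perfect object of $\mathcal{C}_T\mathrm{-mod}$. This compatibility between $T$, the $k$-duality and the subcategory $\mathcal{C}_T\mathrm{-mod}$ is what carries the real content beyond formal nonsense; apart from this endofunctor verification, the rest of the argument is a purely formal derivation of the tensor-Hom adjunction together with Proposition~\ref{dua3}.
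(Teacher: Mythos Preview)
Your approach is essentially identical to the paper's: replace $X$ by a bounded complex $P$ of finite type projectives, use $R\Hom(X,Y)\cong D(P)\otimes Y$, apply $(-)'$, and invoke the tensor--Hom adjunction to get $\Hom(Y,D(P)')$. The one step you leave implicit, and which the paper states explicitly, is that $D(P)'$ is a bounded complex of \emph{injectives} in $\mathcal{C}_T\mathrm{-mod}$: since $(-)'$ sends the projectives $T^a$ to injectives (via $\Hom_{\cmod}(G,(T^a)')\cong (T^a\otimes_{\mathcal{C}}G)'\cong G(a)'$, which is exact in $G$), the complex $D(P)'$ is an injective resolution of $RD(X)'$, and this is precisely why $\Hom(Y,D(P)')$ computes $R\Hom(Y,RD(X)')$ rather than some underived Hom.

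Your worry about $RD(-)'$ landing in $\mathcal{D}_{\mathrm{perf}}(\mathcal{C}_T\mathrm{-mod})$ is legitimate, but the paper does not address it either; the proof there establishes the natural isomorphism and simply asserts the Serre duality consequence. So you are not missing anything the paper supplies on that point.
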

\begin{proof}
Let $P$ a complex of finite type projectives of \linebreak$\Ch^b(\mathcal{C}_T\mathrm{-mod})$ quasi-isomorphic to $X$. Then we have the isomorphisms
$$R\Hom(X,Y)'\cong (D(P)\otimes Y)'\cong \Hom(Y, \text{$D(P)'$}).$$
The second one is a consequence of the adjunction characterizing the tensor product.
The functor $F\mapsto F'$ is exact so $D(P)'$ is a bounded injective resolution of $D(X)'$. We deduce an isomorphism
$$R\Hom(X,Y)'\cong R\Hom(Y, \text{$RD(X)'$}).$$
\end{proof}
\section{Duality functors}
Let $T$ a Yoneda bifunctor fo $\mathcal{C}$. We introduce the concept of duality functors in $\mathcal{C}_T\mathrm{-mod}$. This concept is measured by the action of the functor of duality
$$D:(\mathcal{C}_T\mathrm{-mod})^{op}\rightarrow \mathcal{C}_{T^{op}}\mathrm{-mod}$$
on some objects of $\mathcal{C}_T\mathrm{-mod}$.\\
\bull{Definitions and first properties.}
\begin{definition} A $\mathcal{C}_T$-module is of type
  $FP_{\infty}$
(resp. of type $FP$) if it has a resolution (resp. finite resolution) by finite type projectives of $\mathcal{C}_T\mathrm{-mod}$.
\end{definition}
\begin{definition}
A left $\mathcal{C}_T$-module $F$ is said of projective dimension $n$ if for all $\mathcal{C}_T$-module $G$, and all $m > n$, we have the equality $\operatorname{Ext}_{\cmod}^m\left (F,G\right ) =
0$, and there is a $\mathcal{C}$-module $G$ such that we have $\operatorname{Ext}_{\modcat{\mathcal{C}_T}}^n\left (F,G\right ) \neq 0.$
\end{definition}
\begin{notations}
Let $X$ an object of $\mathcal{C}_T\mathrm{-mod}$ and $n$ an integer. We denote $X[n]$ the complex of $\mathcal{D}\left (\mathcal{C}_{T}\mathrm{-mod}\right)$ defined by $X[n]^i=0$ if $i\neq n$ and $X[n]^n=X$.
\end{notations}
\begin{definition}
We call an Eilenberg-Mac Lane functor of degree $n$ an object $F$ of $\mathcal{C}_T\mathrm{-mod}$ such that we have an isomorphism 
$$RD\left (F\right )\cong D^n\left (F\right )[n]$$
in $\mathcal{D}^+\left (\mathcal{C}_{T^{op}}\mathrm{-mod}\right )$
(Recall that $D^n\left (F\right )=H^n(RDF)$).
\end{definition}
\begin{theorem}\label{dualite}
Let $F$ a $\mathcal{C}_T$-module of type $FP$. The following conditions are equivalent
\begin{enumerate}
\item There exists an integer $n$ such that $F$ is an Eilenberg-Mac Lane functor of degree $n$.
\item There exists an integer $n$ such that $$\operatorname{Ext}_{\modcat{\mathcal{C}_T}}^i\left (F,T_x\right )=0$$
for all $i\neq n$ and all $x \in \text{Ob}\mathcal{C}$.
\item There exists an integer $n$ and an isomorphism $$\operatorname{Ext}_{\modcat{\mathcal{C}_T}}^i\left (F,-\right )\cong \operatorname{Tor}^{\modcat{\mathcal{C}_T}}_{n-i}\left (D^n\left (F\right ),-\right ).$$
\end{enumerate}
The integer $n$ is the projective dimension of $F$ in the category $\mathcal{C}_{T}\mathrm{-mod}$.
\end{theorem}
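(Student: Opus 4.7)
The plan is to organize the equivalences as $(1)\Leftrightarrow(2)$, $(1)\Rightarrow(3)$, $(3)\Rightarrow(2)$, and then treat the projective dimension claim separately. The key technical inputs are the identification $H^i(RD(F))(x)=\operatorname{Ext}^i_{\modcat{\mathcal{C}_T}}(F,T_x)$ established just before the definition of Eilenberg--Mac Lane functor, and Proposition~\ref{dua3} expressing $R\Hom$ via $RD$ and $\stackrel{L}{\otimes}$ for perfect arguments.

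First I would prove $(1)\Leftrightarrow(2)$. Since cohomology in $\mathcal{C}^{op}\mathrm{-mod}$ is computed pointwise, the complex $RD(F)\in\mathcal{D}^+(\modcat{\mathcal{C}_{T^{op}}})$ satisfies $H^i(RD(F))(x)=\operatorname{Ext}^i_{\modcat{\mathcal{C}_T}}(F,T_x)$ at every object $x$. Condition (2) says exactly that these cohomology sheaves vanish outside degree $n$, which in $\mathcal{D}^+(\modcat{\mathcal{C}_{T^{op}}})$ is equivalent to $RD(F)\cong H^n(RD(F))[n]=D^n(F)[n]$; this is condition (1).

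Next I would show $(1)\Rightarrow(3)$. Because $F$ is of type $FP$, it admits a finite resolution by finite type projectives of $\modcat{\mathcal{C}_T}$, hence is perfect. Proposition~\ref{dua3} then yields, for every $G\in\modcat{\mathcal{C}_T}$, a natural isomorphism
$$R\Hom(F,G)\cong RD(F)\stackrel{L}{\otimes}G.$$
Under (1) the right-hand side is $D^n(F)[n]\stackrel{L}{\otimes}G$. Taking $H^i$ on both sides gives $\operatorname{Ext}^i_{\modcat{\mathcal{C}_T}}(F,G)\cong \operatorname{Tor}^{\modcat{\mathcal{C}_T}}_{n-i}(D^n(F),G)$, which is (3). (No spectral sequence is needed because the left argument of the derived tensor product is concentrated in a single degree, so the shifted projective resolution of $D^n(F)$ computes both sides.)

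For $(3)\Rightarrow(2)$, specialize $G=T_x$. The $T$-standard projectives being projective in $\modcat{\mathcal{C}_T}$ (corollary~\ref{CT}) makes $\operatorname{Tor}^{\modcat{\mathcal{C}_T}}_j(-,T_x)=0$ for $j>0$, and Tor vanishes in negative degrees by convention, so $\operatorname{Tor}^{\modcat{\mathcal{C}_T}}_{n-i}(D^n(F),T_x)=0$ whenever $i\neq n$; this gives (2). Finally, for the projective dimension statement, condition (3) forces $\operatorname{Ext}^i_{\modcat{\mathcal{C}_T}}(F,-)=0$ for $i>n$, bounding the projective dimension of $F$ above by $n$; for the reverse inequality one notes that $\operatorname{Ext}^n_{\modcat{\mathcal{C}_T}}(F,T_x)=D^n(F)(x)$ by the pointwise formula recalled above, and since a nonzero $F$ of type $FP$ with $D^i(F)=0$ for $i\neq n$ cannot have $D^n(F)=0$ (otherwise $RD(F)=0$, contradicting $D\circ D\cong\mathrm{Id}$ on finite type projectives of Proposition~\ref{equiv} applied to a finite projective resolution), some $x$ witnesses $\operatorname{Ext}^n\neq 0$.

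The only delicate step is $(1)\Rightarrow(3)$: one must be sure that type $FP$ in $\modcat{\mathcal{C}_T}$ really produces a perfect complex in the sense of Proposition~\ref{dua3} and that the derived tensor product of a one-term complex $D^n(F)[n]$ with $G$ reindexes Tor as claimed; both are formal but are where the definition of $D^n$ and the shift convention for $RD$ need to be matched carefully.
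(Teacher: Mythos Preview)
Your argument is correct and follows essentially the same route as the paper: the equivalence $(1)\Leftrightarrow(2)$ is deemed immediate (via $H^i(RD(F))(x)=\operatorname{Ext}^i(F,T_x)$), the implication $(3)\Rightarrow(2)$ is obtained by evaluating at the projective $T_x$, and $(1)\Rightarrow(3)$ goes through perfectness of $F$ and Proposition~\ref{dua3}. You additionally supply a justification for the projective-dimension assertion, which the paper states without proof; your argument there (via $RD_{T^{op}}\circ RD_T\cong\mathrm{Id}$ on perfect complexes, so $RD(F)=0$ forces $F=0$) is fine, though note that the cleanest packaging of that step is Proposition~\ref{opdualite}, proved just after this theorem using only Proposition~\ref{equiv}.
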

\begin{proof}The equivalence between condition $1$ and condition $2$ is clear. We show that the third condition implies the second one. Let $x$ be an object of
  $\mathcal{C}$. Evaluating the isomorphism of condition $3$ at $T_x$, we obtain $$\operatorname{Ext}_{\modcat{\mathcal{C}_T}}^i\left (F,T_x \right
  )\cong \operatorname{Tor}^{\modcat{\mathcal{C}_T}}_{n-i}\left (D^n\left
  (F\right ),T_x\right ).$$ Since $T_x$ is projective in $\mathcal{C}_{T}\mathrm{-mod}$ then we have for $i\neq n$ the equality
  $\operatorname{Tor}^{\modcat{\mathcal{C}_T}}_{n-i}\left (D^n\left (F\right
  ),T_x\right )=0$ and hence $\operatorname{Ext}_{\modcat{\mathcal{C}_T}}^i\left (F,T_x \right)=0$ for $i\neq n$. We show that condition $1$ implies condition $3$. The
  $\mathcal{C}_T$-module $F$ is of type $FP$ so $F$ is perfect and since $RD\left (F\right )\cong D^n\left (F\right )[n]$ we have by proposition \ref{dua3} the isomorphism $$R\Hom \left (F,-\right )
  \cong D^n\left (F\right )[n]\stackrel{L}{\otimes} -.$$ Taking cohomology, we obtain an isomorphism $\operatorname{Ext}_{\modcat{\mathcal{C}_T}}^i\left (F,-\right )\cong \operatorname{Tor}^{\modcat{\mathcal{C}_T}}_{n-i}\left (D^n\left (F\right ),-\right )$.
\end{proof}
\begin{definition}
A functor $F$ which satisfies one of the conditions of the theorem is called a duality functor and the functor $D^n(F)$ is the dualizing functor of $F$.
\end{definition}
\begin{remark}\label{dualiteremark}
Suppose $k$ is a field. Then the third condition of the theorem is equivalent to the following.
\begin{enumerate}
\item[(3')]\emph{ There exists an integer $n$ and an isomorphism}
$$\operatorname{Ext}_{\modcat{\mathcal{C}_T}}^i\left (F,-\right )'\cong \operatorname{Ext}_{\modcat{\mathcal{C}_T}}^{n-i}\left (-,\text{$D^n\left (F\right )'$}\right ).$$
\end{enumerate}
\end{remark}
\begin{proposition}\label{opdualite}
Let $F$ a duality functor of $\mathcal{C}_T\mathrm{-mod}$ of projective dimension $n$. The functor $D^n(F)$ is a duality functor of $\mathcal{C}_{T^{op}}\mathrm{-mod}$ and $F$ is its dualizing functor. 
\end{proposition}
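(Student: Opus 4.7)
The plan is to build an explicit finite projective resolution of $D^n(F)$ in $\mathcal{C}_{T^{op}}\mathrm{-mod}$ by dualizing a finite resolution of $F$, and then to undo the dualization using the equivalence from Proposition~\ref{equiv}.

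First, since $F$ is of type $FP$ of projective dimension $n$, we can pick a finite resolution $P_\bullet\to F$ by finite-type projectives of $\mathcal{C}_T\mathrm{-mod}$, concentrated in cohomological degrees $-n,\dots,0$. Applying $D$ termwise yields a bounded complex $D(P)$ with $D(P)^i = D(P^{-i}) = D(P_i)$ in degrees $0,\dots,n$, each $D(P_i)$ being a finite-type projective of $\mathcal{C}_{T^{op}}\mathrm{-mod}$ by Proposition~\ref{equiv}. Evaluated at any object $x$ of $\mathcal{C}$, this complex computes $\operatorname{Ext}^\ast_{\modcat{\mathcal{C}_T}}(F,T_x)$, which by condition (2) of Theorem~\ref{dualite} is concentrated in degree $n$. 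Hence $D(P)$ has cohomology only in degree $n$, equal to $D^n(F)$. So $D(P)$ is a finite projective resolution of $D^n(F)[n]$ in $\mathcal{C}_{T^{op}}\mathrm{-mod}$, establishing that $D^n(F)$ is of type $FP$ of projective dimension at most $n$.

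Next, apply $D_{T^{op}}$ to $D(P)$. By Proposition~\ref{equiv}, the composite $D_{T^{op}}\circ D$ is naturally isomorphic to the identity on $\operatorname{Proj}(\mathcal{C}_T\mathrm{-mod})$; since the isomorphism is natural, it extends to complexes and gives a chain isomorphism $D_{T^{op}}(D(P))\cong P$ (the double shift in the cochain convention $D(X)^n=D(X^{-n})$ cancels). Therefore $R D_{T^{op}}(D^n(F)[n])\cong D_{T^{op}}(D(P))\cong P\simeq F$ in $\mathcal{D}^+(\mathcal{C}_T\mathrm{-mod})$. Since $D(X[n])=D(X)[-n]$, this rewrites as
\[
R D_{T^{op}}(D^n(F))\cong F[n],
\]
which is precisely the Eilenberg--Mac Lane condition (1) of Theorem~\ref{dualite} for $D^n(F)$ with $H^n(R D_{T^{op}}(D^n(F)))\cong F$.

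Thus $D^n(F)$ is a duality functor of $\mathcal{C}_{T^{op}}\mathrm{-mod}$ of projective dimension $n$, and its dualizing functor is $F$. The only delicate point is matching cochain conventions and checking that the isomorphism $D_{T^{op}}\circ D\cong \operatorname{id}$ from Proposition~\ref{equiv}, which is stated on objects, is compatible with differentials so that it upgrades to an isomorphism of complexes; this is a routine naturality check once the sign conventions for $D$ on complexes are made explicit.
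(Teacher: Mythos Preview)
Your proof is correct and essentially the same as the paper's: both rest on the equivalence $D_{T^{op}}\circ D\cong\mathrm{id}$ from Proposition~\ref{equiv} to obtain $RD_{T^{op}}(RD_T F)\cong F$, and then unwind the shift to conclude $RD_{T^{op}}(D^nF)\cong F[n]$. The paper phrases this abstractly by promoting Proposition~\ref{equiv} to an equivalence $RD_T:\mathcal{D}_{\mathrm{perf}}(\mathcal{C}_T\mathrm{-mod})^{op}\to\mathcal{D}_{\mathrm{perf}}(\mathcal{C}_{T^{op}}\mathrm{-mod})$ with inverse $RD_{T^{op}}$, whereas you carry out the same computation concretely with a chosen finite resolution $P$ and the explicit complex $D(P)$; your version has the minor bonus of making the type-$FP$ property of $D^n(F)$ explicit, which the paper leaves implicit in the word ``perfect''.
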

\begin{proof}
The equivalence 
$$D_{T}: (Proj(\mathcal{C}_T\mathrm{-mod}))^{op} \rightarrow Proj(\mathcal{C}_{T^{op}}\mathrm{-mod})$$
of proposition~\ref{equiv}, with inverse the functor 
$$D_{T^{op}}:Proj(\mathcal{C}_{T^{op}}\mathrm{-mod})\rightarrow (Proj(\mathcal{C}_T\mathrm{-mod}))^{op},$$
passes to quotient and we obtain the equivalence  $$RD_T:\mathcal{K}^b(Proj(\mathcal{C}_T\mathrm{-mod}))^{op} \rightarrow \mathcal{K}^b(Proj(\mathcal{C}_{T^{op}}\mathrm{-mod}))$$
at the level of homotopy categories. The composition respectively on the left and on the right by the equivalences 
$$\mathcal{D}_{\mathrm{perf}}(\mathcal{C}_T\mathrm{-mod}) \cong \mathcal{K}^b(Proj(\mathcal{C}_{T}\mathrm{-mod}))
$$
$$
\mathcal{K}^b(Proj(\mathcal{C}_{T^{op}}\mathrm{-mod})) \cong \mathcal{D}_{\mathrm{perf}}(\mathcal{C}_{T^{op}}\mathrm{-mod})$$
provides the equivalence of categories 
$$RD_T: (\mathcal{D}_{\mathrm{perf}}(\mathcal{C}_T\mathrm{-mod}))^{op} \rightarrow \mathcal{D}_{\mathrm{perf}}(\mathcal{C}_{T^{op}}\mathrm{-mod})$$
with inverse the functor $RD_{T^{op}}$. 
The isomorphism $RD_{T^{op}}(RD_{T}F)\cong F$ implies $RD_{T^{op}}(D^nF)\cong F[n]$. It follows that $D^n(F)$ is a duality functor of $\mathcal{C}_{T^{op}}\mathrm{-mod}$ and $F$ is its dualizing functor. 
\end{proof}
\section{Illustration: strict polynomial functors}
We illustrate these concepts in homology of strict polynomial functors in the sense of Friedlander and Suslin~\cite{FS}. 
The study of extensions groups is well developed this later years (see eg~\cite{Touze2}). 
For a finite field $k$, let $\operatorname{Rep}\Gamma^d_k$ be the category of strict polynomial functors of degree $d$. 
The category $\operatorname{Rep}\Gamma^d_k$ is equipped with a Ringel duality $\theta$ introduced by M.Cha\l upnik~\cite{Chalupnik} and A.Touz\'e~\cite{Touze1}: 
the functor Ringel duality $\theta:=R\mathcal{H}om(\Lambda^d,-):\mathcal{D}^b(\operatorname{Rep}\Gamma^d_k) \rightarrow \mathcal{D}^b(\operatorname{Rep}\Gamma^d_k)$ is an equivalence of categories with inverse the functor
$\theta^{-1}:X\mapsto  \theta(X^\#)^\#$
(see \protect{\cite[corollary 2.4]{Chalupnik}}, \protect{\cite[theorem 3.8]{Touze1}}, \protect{\cite[theorem 4.9]{Krause}}).

H. Krause  in~\cite{Krause} showed that the equivalence $\theta^{-2}$ is a Serre duality for $\mathcal{D}_{\mathrm{perf}}(\operatorname{Rep}\Gamma^d_k)$.
 
This phenomenon of Serre duality for the category $\operatorname{Rep}\Gamma^d_k$ fits in the general framework of the duality presented in this article. 
On one hand, our theory of duality allows to equip the category of strict polynomial functors $\operatorname{Rep}\Gamma^d_k$ with a functor of duality $D$ which in turn provides a Serre duality $RD(-)'$ for $\mathcal{D}_{\mathrm{perf}}(\operatorname{Rep}\Gamma^d_k)$.
On the other hand, we deduce from the uniqueness of a Serre duality that $RD(-)'$ and the Serre duality described by Krause coincide \textit{i.e.} the equivalence of categories $\theta^{-2}$ is isomorphic to $RD(-)'$.
\bull{Serre duality for $\operatorname{Rep}\Gamma^d_k$.}
We follow the notations of \cite{Krause}. We  refer to~\cite{pirashvili} for more details on strict polynomial functors. 
\par\smallskip
Let $\text{P}_k$ the category of $k$-vector spaces of finite dimension and 
$\Gamma^d: \text{P}_k \rightarrow \text{P}_k$ the divided power functor defined by
$$\Gamma^d (V)=\left(V^{\otimes^d}\right)^{\mathfrak{S}_d}$$
[the symmetric group $\mathfrak{S}_d$ acting by permutation of the factors of the tensor product]. 
We denote $\Gamma^d \text{P}_k$ the category whose objects are finite dimensional $k$-vector spaces, whose morphisms are defined by 
$$\Hom_{\Gamma^d \text{P}_k}(V,W)=\Gamma^d\Hom_k(V,W)~~~~(\forall V,W  \in \text{P}_k)$$
and composition of morphisms is defined \textit{via} the identification 
$$\Gamma^d\Hom_k(V,W)=\Hom_k \left(V^{\otimes^d},W^{\otimes^d}\right)^{\mathfrak{S}_d}.$$
The category of strict polynomial functor of degree $d$ is the category of $k$-linear functors from $\Gamma^d \text{P}_k$ to the category of $k$-modules. 
We denote $\operatorname{Rep}\Gamma^d_k$ this category and $\operatorname{Rep}((\Gamma^d_k)^{op})$ the category of contravariant $k$-linear functors from $\Gamma^d \text{P}_k$ to the category of $k$-modules.

For example, the classical functors of tensor power $\otimes^d$, of symmetric power $S^d$, of external power $\Lambda^d$ and of divided power $\Gamma^d$ are strict polynomial functors of degree $d$.

 The category $\operatorname{Rep}\Gamma^d_k$ is an epireflective and coreflective subcategory of $\modcat{\Gamma^d \text{P}_k}$ associated to the Yoneda bifunctor $L_{\Gamma^d \text{P}_k}$ defined in example~\ref{yonedaexemple}. 
 This is an abelian category. The standard projectives $\Gamma^{d,V}=\Hom_{\Gamma^d \text{P}_k}(V,-)$  form a system of projective generators of $\operatorname{Rep}\Gamma^d_k$. 
 We recall that we have a functor of duality
$$\begin{array}{crcl}
D:&(\operatorname{Rep}\Gamma^d_k)^{op} &\rightarrow &\operatorname{Rep}((\Gamma^d_k)^{op})\\
&F&\mapsto& \mathcal{H}om(F,L_{\Gamma^d \text{P}_k}).
\end{array}$$
\begin{proposition}
The functor $RD(-)'$ is a Serre duality for $\mathcal{D}_{\mathrm{perf}}(\operatorname{Rep}\Gamma^d_k)$.
\end{proposition}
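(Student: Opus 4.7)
The plan is to observe that the proposition is a direct specialization of the general Serre duality result already established in Proposition~\ref{dualitedeserre}. I would not redo any of the homological bookkeeping; instead I would simply verify that the hypotheses of that proposition are met in the present setting and that the functor $D$ defined just above the statement matches the one appearing there.

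First I would invoke the sentence preceding the proposition: the category $\operatorname{Rep}\Gamma^d_k$ is the epireflective and coreflective subcategory of $\modcat{\Gamma^d\mathrm{P}_k}$ corresponding, under Proposition~\ref{epireflexive}, to the Yoneda bifunctor $L_{\Gamma^d\mathrm{P}_k}$ introduced in Example~\ref{yonedaexemple}(2). In the notation of Section~3, this identification reads
$$\operatorname{Rep}\Gamma^d_k \ \simeq\ (\Gamma^d\mathrm{P}_k)_{L_{\Gamma^d\mathrm{P}_k}}\mathrm{-mod},$$
and the $T$-standard projectives $(L_{\Gamma^d\mathrm{P}_k})_V$ are precisely the standard projectives $\Gamma^{d,V}=\Hom_{\Gamma^d\mathrm{P}_k}(V,-)$. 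Since the enrichment category $\Gamma^d\mathrm{P}_k$ is $k$-linear, we are in exactly the context assumed in Section~4 for the Serre duality paragraph.

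Next I would check that the duality functor $D$ defined just above the proposition coincides with the functor $D:(\mathcal{C}_T\mathrm{-mod})^{op}\to\mathcal{C}_{T^{op}}\mathrm{-mod}$ from Section~4. By definition $DF(x)=\Hom_{\operatorname{Rep}\Gamma^d_k}(F,\Gamma^{d,x})=\Hom(F,(L_{\Gamma^d\mathrm{P}_k})_x)$, which is exactly $\mathcal{H}om(F,L_{\Gamma^d\mathrm{P}_k})(x)$ in the notation~\ref{notations}. Hence $RD$ agrees with the derived functor built in Section~4.

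The conclusion is then immediate from Proposition~\ref{dualitedeserre}: for all $X,Y$ in $\mathcal{D}_{\mathrm{perf}}(\operatorname{Rep}\Gamma^d_k)$ one has a natural isomorphism $R\Hom(X,Y)'\cong R\Hom(Y,RD(X)')$, which is precisely the Serre duality property for the endofunctor $RD(-)'$. There is no real obstacle here; the only point requiring vigilance is the bookkeeping of the two identifications above (the subcategory identification and the matching of the two $D$'s), since everything else has been done in full generality in Sections~3 and~4.
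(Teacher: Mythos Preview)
Your proposal is correct and matches the paper's own proof, which is the single sentence ``This is a rewrite of the proposition \ref{dualitedeserre} in the context of strict polynomial functors.'' You have simply made explicit the two identifications (of $\operatorname{Rep}\Gamma^d_k$ with $(\Gamma^d\mathrm{P}_k)_{L_{\Gamma^d\mathrm{P}_k}}\mathrm{-mod}$ and of the two $D$'s) that the paper leaves implicit.
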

\begin{proof}
This is a rewrite of the proposition \ref{dualitedeserre} in the context of strict polynomial functors.
\end{proof}
\begin{notations}
Let $F$ be an object of $\operatorname{Rep}\Gamma^d_k$ and $V$ an object of $\text{P}_k$.
\begin{itemize} 
\item We denote $\Lambda^{d,V}$ the precomposition of $\Lambda^d$ by $\Hom(V,-)$. 
\item We denote $^\#$ the functor 
$$\begin{array}{cccc}
^\#:&(\operatorname{Rep}\Gamma^d_k)^{op} &\rightarrow&
  (\operatorname{Rep}\Gamma^d_k)\\
& F&\mapsto &F^\#\end{array}$$
defined by $F^\#(V)=F(V')'$.
\item We denote $\mathcal{H}om(\Lambda^d,-)$ the functor 
$$\begin{array}{crcl}
\mathcal{H}om(\Lambda^d,-):&\operatorname{Rep}\Gamma^d_k &\rightarrow &\operatorname{Rep}\Gamma^d_k\\
&F&\mapsto& \mathcal{H}om(\Lambda^d,F)
\end{array}$$
where $\mathcal{H}om(\Lambda^d,F)$ is the functor $V\mapsto \Hom_{\operatorname{Rep}\Gamma^d_k}(\Lambda^{d,V},F)$.
\end{itemize}
\end{notations}
\begin{theorem}[\protect{\cite[theorem 3.8]{Touze1}, \cite[theorem 4.9]{Krause}}]\label{dualitederingel}
The functor Ringel duality 
$$
\theta:=R\mathcal{H}om(\Lambda^d,-):
\mathcal{D}^b(\operatorname{Rep}\Gamma^d_k) \rightarrow \mathcal{D}^b(\operatorname{Rep}\Gamma^d_k)
$$ 
is an equivalence of categories with quasi-inverse the functor
$\theta^{-1}:X\mapsto  \theta(X^\#)^\#$.
\end{theorem}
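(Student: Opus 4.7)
The plan is to prove the equivalence by computing $\theta$ on the standard projective generators $\Gamma^{d,V}$, then using the duality ${}^{\#}$ to unwind the composition $X \mapsto \theta(X^{\#})^{\#}$, and finally extending by dévissage to all of $\mathcal{D}^b(\operatorname{Rep}\Gamma^d_k)$.

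First, I would compute $\theta(\Gamma^{d,V}) = R\mathcal{H}om(\Lambda^d,\Gamma^{d,V})$ on generators. Pointwise this means evaluating $R\Hom_{\operatorname{Rep}\Gamma^d_k}(\Lambda^{d,W},\Gamma^{d,V})$ for every $W \in \mathrm{P}_k$. Using the classical Koszul complex linking tensor, divided and exterior power functors, together with the fact that the extension groups between $\Lambda^{d,W}$ and $\Gamma^{d,V}$ vanish in all positive degrees, one identifies $\theta(\Gamma^{d,V})$ with an explicit co-standard object (essentially $\Lambda^{d,V}$, concentrated in one degree). This is the tilting-style exchange rule that puts $\Lambda^d$ in the class of tilting objects for the highest-weight structure on $\operatorname{Rep}\Gamma^d_k$, and it shows that $\theta$ carries a projective generating set of $\mathcal{D}^b$ to another generating set.

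Second, I would analyse the inverse formula using the duality ${}^{\#}$. This functor is exact and involutive, and it is known to interchange standard projectives with injective cogenerators: $(\Gamma^{d,V})^{\#} \cong S^{d,V}$, while $\Lambda^{d,V}$ behaves symmetrically under ${}^{\#}$. Consequently the composite $\theta^{-1}(X) := \theta(X^{\#})^{\#}$ reverses the exchange rule of step one, sending the costandard-type objects produced by $\theta$ back to the original standard projectives. Checking unit and counit explicitly on the generators $\Gamma^{d,V}$ then yields natural isomorphisms $\theta^{-1}\theta(\Gamma^{d,V}) \cong \Gamma^{d,V}$ and $\theta\theta^{-1}(\Gamma^{d,V}) \cong \Gamma^{d,V}$, with the naturality following from functoriality of ${}^{\#}$ and of $R\mathcal{H}om(\Lambda^d,-)$.

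Third, since both $\theta$ and $\theta^{-1}$ are triangulated functors, and since the $\Gamma^{d,V}$ generate $\mathcal{D}^b(\operatorname{Rep}\Gamma^d_k)$ as a triangulated category, a standard dévissage via distinguished triangles propagates the pointwise isomorphisms above to every bounded complex. The main obstacle is the identification in the first step: everything else is essentially formal, but computing $R\Hom(\Lambda^{d,W},\Gamma^{d,V})$ relies on the explicit Koszul duality between $\Lambda^d$ and $\Gamma^d$ as strict polynomial functors and on the Ext-vanishing property that makes $\Lambda^d$ a tilting module, which is precisely the technical heart of the arguments of Cha\l upnik~\cite{Chalupnik} and Touz\'e~\cite{Touze1}.
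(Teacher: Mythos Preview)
The paper does not prove this theorem: it is quoted verbatim from the literature (Touz\'e~\cite[theorem~3.8]{Touze1} and Krause~\cite[theorem~4.9]{Krause}), with no argument given, and is then used as a black box in the proofs of Propositions~\ref{relationthetaetd} and~\ref{exempledualitepolynomiauxstrict}. There is therefore no proof in the paper to compare your proposal against.

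That said, your outline is in the spirit of the arguments in the cited references, though a couple of points deserve care. First, your claim that $\operatorname{Ext}^i(\Lambda^{d,W},\Gamma^{d,V})$ vanishes for $i>0$ does not follow from projectivity of $\Gamma^{d,V}$ (projectivity controls $\operatorname{Ext}^i(\Gamma^{d,V},-)$, not $\operatorname{Ext}^i(-,\Gamma^{d,V})$); one really needs the highest-weight/tilting theory to justify the relevant Ext-vanishing, and the exchange rule you want is $\theta(\Lambda^{\mathfrak s})\cong\Gamma^{\mathfrak s}$ and $\theta(S^{\mathfrak s})\cong\Lambda^{\mathfrak s}$ (as recorded later in the paper), rather than $\theta(\Gamma^{d,V})\cong\Lambda^{d,V}$. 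Second, the d\'evissage step is fine once you know $\theta$ and $\theta^{-1}$ agree on a generating set, but the hard content is entirely in the explicit tilting computations of the first step, which you correctly attribute to~\cite{Chalupnik} and~\cite{Touze1}.
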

\begin{proposition}\label{relationthetaetd}The endofunctors $RD(-)'$ and $\theta^{-2}$ of $\mathcal{D}_{\mathrm{perf}}(\operatorname{Rep}\Gamma^d_k)$ are isomorphic.
\end{proposition}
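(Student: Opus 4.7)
The plan is to appeal to the uniqueness of Serre duality, which has already been established in Remark~\ref{unicitedualitedeserre}. More precisely, point~(2) of that remark uses the Yoneda embedding to show that any two Serre duality functors on a given $k$-linear category must be isomorphic (since, object by object, $Sa$ represents the same functor $\Hom_{\mathcal{A}}(a,-)'$, and representing objects are unique up to unique isomorphism, naturally in $a$).

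First I would recall the two facts already in hand. On one side, Proposition~\ref{dualitedeserre} applied to the Yoneda bifunctor $L_{\Gamma^d\mathrm{P}_k}$ (equivalently, the proposition stated just above) gives that $RD(-)'$ is a Serre duality for $\mathcal{D}_{\mathrm{perf}}(\operatorname{Rep}\Gamma^d_k)$. On the other side, Krause's result cited in the introductory paragraph of this section provides that $\theta^{-2}$ is also a Serre duality for $\mathcal{D}_{\mathrm{perf}}(\operatorname{Rep}\Gamma^d_k)$. Both endofunctors therefore satisfy, for all objects $X,Y$ of $\mathcal{D}_{\mathrm{perf}}(\operatorname{Rep}\Gamma^d_k)$, a natural isomorphism of the form
\[
\Hom(Y,SX)\cong \Hom(X,Y)',
\]
with $S=RD(-)'$ in one case and $S=\theta^{-2}$ in the other.

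Then I would invoke Remark~\ref{unicitedualitedeserre}: for each object $X$, the object $SX$ represents the functor $\Hom(X,-)'$ on $\mathcal{D}_{\mathrm{perf}}(\operatorname{Rep}\Gamma^d_k)^{op}$, so applying the Yoneda lemma one gets an isomorphism $RD(X)' \cong \theta^{-2}(X)$ that is unique with the property of being compatible with the two given Serre-duality isomorphisms; naturality in $X$ is automatic from this uniqueness, via the standard argument that Yoneda-type isomorphisms assemble into a natural transformation.

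The proof is thus essentially one line once the preceding proposition and Krause's theorem are in place; there is no real obstacle, only a verification that the two Serre-duality structures being compared are Serre dualities in exactly the same sense as Definition/Remark~\ref{unicitedualitedeserre}, which is clear from the definitions used in this section.
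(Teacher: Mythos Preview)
Your proposal is correct and follows exactly the paper's own argument: both the paper and you invoke Krause's result that $\theta^{-2}$ is a Serre duality for $\mathcal{D}_{\mathrm{perf}}(\operatorname{Rep}\Gamma^d_k)$, combine it with the earlier proposition that $RD(-)'$ is a Serre duality, and conclude via the uniqueness of a Serre functor from Remark~\ref{unicitedualitedeserre}.
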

\begin{proof}
H.Krause showed that the functor $\theta^{-2}$ is a Serre duality for $\mathcal{D}_{\mathrm{perf}}(\operatorname{Rep}\Gamma^d_k)$ (see \protect{\cite[corollary 5.5]{Krause}}). 
The proposition follows from the uniqueness of a Serre duality up to an isomorphism (see remark \ref{unicitedualitedeserre}).
\end{proof}
\bull{Example of a duality functor in $\operatorname{Rep}\Gamma^d_k$.}
Let $k$ be a field of characteristic $p$. Let $r$ and $s$ be two positive integers. We put $d=sp^r$. 
 The functor Frobenius twist 
$$\operatorname{I}^{\left (r\right )}:V\mapsto V^{(r)}=k<v^{\otimes p^r},v\in V> \subset S^{p^r}\left (V\right )$$
is a strict polynomial functor of degree $p^r$.
The precomposition by $\operatorname{I}^{\left (r\right )}$ of a strict polynomial functor $F$ of degree $s$ 
define a strict polynomial functor $\operatorname{I}^{\left (r\right )}\circ F$ of degree $sp^r$. 
We put $F^{(r)}:=\operatorname{I}^{\left (r\right )}\circ F$.\\
Let $\mathfrak{s}=\left (s_1,\cdots, s_k\right )$ a partition of $s$. For $X=S$ (resp. $X=\Gamma, X=\Lambda$), we set $X^{\mathfrak{s}}= \otimes_{i=1}^{k}X^{s_i}$. Recall that $X^{\mathfrak{s}}$ is then a strict polynomial functor of degree $s$.
\begin{proposition}\label{exempledualitepolynomiauxstrict}
Let $\mathfrak{s}=\left (s_1,\cdots, s_k\right )$ a partition of $s$. Then the functor $\Gamma^{\mathfrak{s}(r)}$ is a duality functor. In particular
$$\text{$RD$}\left (\Gamma^{\mathfrak{s}(r)} \right )'\cong S^{\mathfrak{s}(r)}[2s\left (p^r-1\right )]  $$
and we have a natural isomorphism
$$\operatorname{Ext}_{\operatorname{Rep}\Gamma^d_k}^i\left (\Gamma^{\mathfrak{s}(r)} ,-\right )'\cong \operatorname{Ext}_{\operatorname{Rep}\Gamma^d_k}^{2s\left (p^r-1\right )-i}\left (-,S^{\mathfrak{s}(r)} \right ).$$
\end{proposition}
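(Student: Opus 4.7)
The plan is to reduce the statement to a computation with the Ringel duality $\theta$ and then invoke Theorem~\ref{dualite}. The starting point is Proposition~\ref{relationthetaetd}, which identifies $RD(-)'$ with $\theta^{-2}$ on $\mathcal{D}_{\mathrm{perf}}(\operatorname{Rep}\Gamma^d_k)$. So the whole statement reduces to establishing the isomorphism
$$
\theta^{-2}\bigl(\Gamma^{\mathfrak{s}(r)}\bigr)\;\cong\;S^{\mathfrak{s}(r)}[2s(p^r-1)]
$$
in $\mathcal{D}^b(\operatorname{Rep}\Gamma^d_k)$; everything else is then a formal consequence.

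To carry out this computation I would first handle the case of a single Frobenius-twisted divided power $\Gamma^{s(r)}$ (partition with one part), and then bootstrap to an arbitrary partition $\mathfrak{s}=(s_1,\dots,s_k)$ via a tensor-product argument. For the single-part case, the input is Touz\'e's computation of Ringel duality on Frobenius twists in~\cite{Touze1} (see also Chałupnik~\cite{Chalupnik}), which in the form I want gives $\theta^{-1}(\Gamma^{s(r)})\cong S^{s(r)}[s(p^r-1)]$ and, applying $\theta^{-1}$ once more together with the standard identification $\theta^{-1}(S^{s(r)})\cong\Gamma^{s(r)}[s(p^r-1)]$ (obtained either by applying $\theta$ to both sides of the previous formula or directly via the dual Koszul complex of the Frobenius twist), the iterated formula
$$
\theta^{-2}\bigl(\Gamma^{s(r)}\bigr)\;\cong\;S^{s(r)}[2s(p^r-1)].
$$
To pass to a general partition, I would use that $\theta$ is built from $R\mathcal{H}om(\Lambda^d,-)$ and that an exponential-type decomposition $\Lambda^d\cong\bigoplus_{|\mathfrak{t}|=d}\Lambda^{\mathfrak{t}}$ (with $\mathfrak{t}$ running over compositions) lets one compute $R\mathcal{H}om(\Lambda^d,\Gamma^{\mathfrak{s}(r)})$ as an external product of the one-part computations on the factors $\Gamma^{s_i(r)}$. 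Summing the shifts $s_i(p^r-1)$ yields the global shift $s(p^r-1)$ per application of $\theta^{-1}$, hence $2s(p^r-1)$ after $\theta^{-2}$, and the tensor products of $S^{s_i(r)}$ assemble into $S^{\mathfrak{s}(r)}$.

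Once this computation is done, Proposition~\ref{relationthetaetd} gives $RD(\Gamma^{\mathfrak{s}(r)})'\cong S^{\mathfrak{s}(r)}[2s(p^r-1)]$, which is concentrated in a single degree. In particular $RD(\Gamma^{\mathfrak{s}(r)})$ is concentrated in degree $2s(p^r-1)$, so $\Gamma^{\mathfrak{s}(r)}$ is an Eilenberg--Mac Lane functor of that degree in the sense of Section~5. To apply Theorem~\ref{dualite}, I still need to check that $\Gamma^{\mathfrak{s}(r)}$ is of type $FP$; this is standard since $\operatorname{Rep}\Gamma^d_k$ has finite global dimension and $\Gamma^{\mathfrak{s}(r)}$ is finitely generated (being a quotient of tensor products of standard projectives $\Gamma^{d,V}$). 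Theorem~\ref{dualite} together with Remark~\ref{dualiteremark} (we are over a field) then converts the identification of $RD(\Gamma^{\mathfrak{s}(r)})'$ into the claimed Ext isomorphism $\operatorname{Ext}^i(\Gamma^{\mathfrak{s}(r)},-)'\cong\operatorname{Ext}^{2s(p^r-1)-i}(-,S^{\mathfrak{s}(r)})$. The main obstacle is the tensor-product/Künneth step for Ringel duality on twisted products: Touz\'e's theorem is stated for a single twisted divided power, and one needs to verify carefully that the external product structure is compatible with $\theta$ so that the shifts and symmetric factors assemble correctly on $\mathfrak{s}$.
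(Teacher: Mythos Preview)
Your overall plan is exactly the paper's: invoke Proposition~\ref{relationthetaetd} to identify $RD(-)'$ with $\theta^{-2}$, compute $\theta^{-2}(\Gamma^{\mathfrak{s}(r)})$, and conclude via Theorem~\ref{dualite} and Remark~\ref{dualiteremark}. But the computation of $\theta^{-2}$ that you sketch is broken. The two intermediate identities you state, $\theta^{-1}(\Gamma^{s(r)})\cong S^{s(r)}[s(p^r-1)]$ and $\theta^{-1}(S^{s(r)})\cong\Gamma^{s(r)}[s(p^r-1)]$, are inconsistent with your own conclusion: composing them yields $\theta^{-2}(\Gamma^{s(r)})\cong\Gamma^{s(r)}[2s(p^r-1)]$, not $S^{s(r)}[2s(p^r-1)]$. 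In fact neither identity is right; under $\theta^{-1}$ the chain is $\Gamma\mapsto\Lambda\mapsto S$ (each step carrying the Frobenius shift), as one sees from $\theta^{-1}(X)=\theta(X^\#)^\#$ together with $\theta(S^{\mathfrak{s}})\cong\Lambda^{\mathfrak{s}}$, $\theta(\Lambda^{\mathfrak{s}})\cong\Gamma^{\mathfrak{s}}$, $(\Gamma^{\mathfrak{s}})^\#=S^{\mathfrak{s}}$ and $(\Lambda^{\mathfrak{s}})^\#=\Lambda^{\mathfrak{s}}$.

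The paper's route is shorter and also dissolves the K\"unneth obstacle you worry about. It never iterates $\theta^{-1}$: using the description of $\theta^{-1}$ in Theorem~\ref{dualitederingel} it writes
\[
\theta^{-2}\bigl(\Gamma^{\mathfrak{s}(r)}\bigr)=\bigl(\theta^2((\Gamma^{\mathfrak{s}(r)})^\#)\bigr)^\#=\bigl(\theta^2(S^{\mathfrak{s}(r)})\bigr)^\#,
\]
and then applies two results of Touz\'e that already hold for an \emph{arbitrary} partition $\mathfrak{s}$: \cite[lemma~3.6]{Touze1}, giving $\theta^2(S^{\mathfrak{s}})\cong\Gamma^{\mathfrak{s}}$, and \cite[proposition~6.6]{Touze1}, giving the Frobenius-shift formula $\theta^2(C\circ I^{(r)})\cong(\theta^2(C)\circ I^{(r)})[2s(p^r-1)]$. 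A final $(-)^\#$ turns $\Gamma^{\mathfrak{s}(r)}$ back into $S^{\mathfrak{s}(r)}$. No single-part reduction, no exponential decomposition of $\Lambda^d$, and no tensor-product compatibility check is needed.
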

\begin{proof}
For the proof of the proposition, we make use of the following properties of the Ringel duality $\theta$ (see theorem~\ref{dualitederingel}) for strict polynomial functors studied in the article \cite{Touze1}.
\begin{enumerate}
\item  We have isomorphisms $\theta(S^{\mathfrak{s}})\cong\Lambda^{\mathfrak{s}}$ and $\theta(\Lambda^{\mathfrak{s}})\cong\Gamma^{\mathfrak{s}}$ (see \protect{\cite[lemma 3.6]{Touze1}}).
\item  Let $C \in \mathcal{D}^b(\operatorname{Rep}\Gamma^d_k)$. We have natural isomorphisms
$$\theta^n(C \circ \operatorname{I}^{(r)})\cong \left( \theta^n(C)\circ \operatorname{I}^{(r)} \right)[dn(p^r-1)]$$
and
$$\theta^n(\operatorname{I}^{(r)}\circ C)\cong \left( \operatorname{I}^{(r)} \circ \theta^n(C) \right)[dn(p^r-1)]$$
(see \protect{\cite[proposition 6.6]{Touze1}}).
\end{enumerate}
The proposition \ref{relationthetaetd} and the definition of the inverse of $\theta$ in theorem \ref{dualitederingel} imply 
$$RD\left (\Gamma^{\mathfrak{s}} \circ \operatorname{I}^{\left (r\right )}\right )'
\cong \theta^{-2}(\Gamma^{\mathfrak{s}} \circ \operatorname{I}^{\left (r\right )})
\cong \theta^2(\Gamma^{\mathfrak{s}} \circ \operatorname{I}^{\left (r\right )})^{\#})^{\#}.$$
The above properties of $\theta$ and the equalities 
$(F\circ G)^{\#}=F^{\#}\circ G^{\#},\Gamma^{\mathfrak{s}\#}=S^{\mathfrak{s}}$ and $\operatorname{I}^{(r)\#}=\operatorname{I}^{(r)}$
allow to rewrite this isomorphism as
$$RD\left (\Gamma^{\mathfrak{s}} \circ \operatorname{I}^{\left (r\right )}\right )'
\cong (\theta^2(S^{\mathfrak{s}}) \circ \operatorname{I}^{\left (r\right )})^{\#}[2s\left (p^r-1\right )]
\cong  S^{\mathfrak{s}} \circ \operatorname{I}^{\left (r\right )}[2s\left (p^r-1\right )].
$$
The functor  $\Gamma^{\mathfrak{s}} \circ \operatorname{I}^{\left (r\right )}$ is then an Eilenberg-Mac Lane functor of degree $2s\left (p^r-1\right )$, the isomorphism between extensions groups follows from remark \ref{dualiteremark}.
\end{proof}

\section{Tits buildings are duality categories}
Let $\mathcal{C}$ be a small category. In this section $k$ is the abelian group $\mathbb Z$. Recall that $\mathcal{C}\mathrm{-mod}$ is the category of covariant functors from  $\mathcal{C}$ to the category of abelian groups.  The functor of duality $$D:(\mathcal{C}\mathrm{-mod})^{op} \rightarrow \mathcal{C}^{op}\mathrm{-mod}$$ which associates to a left $\mathcal{C}$-module $F$ the right $\mathcal{C}$-module $\mathcal{H}om(F,\underline{\underline{\mathcal{C}}})$ sends a standard projective $P_x^{\mathcal{C}}$ of $\mathcal{C}\mathrm{-mod}$ to the standard projective $P^x_{\mathcal{C}}$ of $\mathcal{C}^{op}\mathrm{-mod}$. A functor $F$ of $\mathcal{C}\mathrm{-mod}$ is a duality functor if $RDF\cong D^n(F)[n]$ in $\mathcal{D}^+(\mathcal{C}^{op}\mathrm{-mod} )$ for some integer $n$ \textit{i.e.} given a projective resolution of type $FP$ of $F$, the cohomology of the complex of projectives obtained after applying the functor of duality $D$ is concentrated in degree $n$. In this section we are interested in small categories $\mathcal{C}$ such that the constant functor $\underline{\mathbb Z}$ is a duality functor. Our main result is corollary~\ref{immeubledetits} where we show that spherical Tits buildings are duality categories.

\noindent
\bull{Definitions and first properties.}
\medskip
\begin{definition} A small category $\mathcal{C}$ is a duality category if the $\mathcal{C}$-module $\underline{\mathbb{Z}}$ is a duality functor. In this case we have a natural isomorphism
$$\operatorname{Ext}_{\cmod}^i\left (\underline{\mathbb{Z}},-\right )\cong \operatorname{Tor}^{\cmod}_{n-i}\left (D^n\left (\underline{\mathbb{Z}}\right ),-\right )$$
for some integer $n$. The right $\mathcal{C}$-module $D^n\left (\underline{\mathbb{Z}}\right )$ is called the dualizing functor of $\mathcal{C}$. 
\end{definition}

\begin{example}
Let $\mathcal{C}$ the category $\xymatrix{x \ar@/^/[r]^\alpha\ar@/_/[r]_\beta & y}.$ The following complex of left $\mathcal{C}$-modules 
\begin{equation}0\longrightarrow P_y^{\mathcal{C}} \stackrel{\alpha ^* -\beta^*}{\longrightarrow} P_x^{\mathcal{C}} \label{c1}\end{equation}
is a finite projective resolution of the constant functor $\underline{\mathbb{Z}}$.  Applying the functor of duality $D$, we obtain the complex
\begin{equation}
P^y_{\mathcal{C}} \stackrel{\alpha _* -\beta_*}{\longleftarrow} P^x_{\mathcal{C}} \longleftarrow 0.\label{c2}\end{equation}
We check that it is a finite projective resolution of the right $\mathcal{C}$-module $\underline{\mathbb{Z}}$. It follows that $\underline{\mathbb{Z}}$ is a duality functor and $D(\underline{\mathbb{Z}})\cong \underline{\mathbb{Z}}[1]$.
\end{example}

\begin{example}
Let $\mathcal{C}$ the following category

$$\xymatrix{
4 \ar[rd]_a &0 \ar[rd]^{^{^d}} \ar[d]_b &1 \ar[ld]_{^{^c}} \ar[d]^e\\
&2  &3}$$
The complex of $\mathcal{C}$-modules
$$0\rightarrow P_2^{\mathcal{C}}\oplus P_2^{\mathcal{C}} \oplus P_3^{\mathcal{C}}
\stackrel{\phi}{\rightarrow}P_0^{\mathcal{C}} \oplus P_1^{\mathcal{C}} \oplus P_4^{\mathcal{C}}
$$where $\phi$ is given by the matrix
 $\left(\begin{array}{ccc}
	b^*&b^*&d^*\\
	-c^*&0&-e^*\\
	0&-a^*&0
\end{array}\right)$
is a projective resolution of $\underline{\mathbb{Z}}$. We apply the functor of duality $D$ to this resolution, we obtain the complex
$$P^2_{\mathcal{C}} \oplus P^2_{\mathcal{C}} \oplus P^3_{\mathcal{C}} \stackrel{\phi_*}{\leftarrow} P^0_{\mathcal{C}} \oplus P^1_{\mathcal{C}} \oplus P^4_{\mathcal{C}} \leftarrow 0$$
and we check that it's a projective resolution of the right $\mathcal{C}$-module $D^1(\underline{\mathbb{Z}})$, given on objects by $$D^1(\underline{\mathbb{Z}})(0)=D^1(\underline{\mathbb{Z}})(1)=D^1(\underline{\mathbb{Z}})(2)=\mathbb{Z}^2 \text{ and } D^1(\underline{\mathbb{Z}})(3)=D^1(\underline{\mathbb{Z}})(4)=\mathbb Z.$$
\end{example}
\medskip

To see that a small category $\mathcal{C}$ is a duality category, it suffices to calculate for any object $x$ of $\mathcal{C}$ the extensions groups
$$\operatorname{Ext}_{\cmod}^*(\underline{\mathbb{Z}},P_x^{\mathcal{C}}).$$ If these groups are trivial except for some degree $n$ (which does not depend on $x$) then $\mathcal{C}$ is a duality category.
The purpose of what follows is to simplify the calculations of these groups.

\begin{recalls}[Cohomology of a small category]
Our basic reference for these recalls is chapter $1$ of \cite{Quillen}. More details can be found in \cite{xu}.

The cohomology of a small category $\mathcal{C}$ with coefficient in a functor $F$ is defined by $\operatorname{H}^*(\mathcal{C};F):=\operatorname{Ext}_{\cmod}^*(\underline{\mathbb{Z}},F) ,$ and the homology with coefficient in a functor $F$ is defined by
$\operatorname{H}_*(\mathcal{C};F):=\operatorname{Tor}^{\cmod}_*(\underline{\mathbb{Z}},F).$

To a small category $\mathcal{C}$, we associate a simplicial set $\mathcal{N}\mathcal{C}$ called the nerve of $\mathcal{C}$ (see \cite{categories}) with $\mathcal{N}_n\mathcal{C}$ is the set of chains of morphisms of $\mathcal{C}$ of length $n$. The face operators for $i=0\cdots n$ are given by 
$$d_i^n(x_0\rightarrow \cdots \rightarrow x_n)=x_0\rightarrow \cdots \hat{x}_{i} \cdots \rightarrow x_n,$$
where $x_0\rightarrow \cdots \hat{x}_{i} \cdots \rightarrow x_n$ is the chain of morphisms of length $n-1$ obtained from $x_0\rightarrow \cdots \rightarrow x_n$ replacing the subchain $x_{i-1}\rightarrow x_i \rightarrow x_{i+1}$ by its composition. The $d_i^n$'s extend by linearity to give maps of abelian groups $\mathbb Z[\mathcal{N}_n\mathcal{C}]\rightarrow \mathbb Z[\mathcal{N}_{n-1}\mathcal{C}]$ that we still denote by $d_i^n$. The homology $H_*(\mathcal{N}\mathcal{C},\mathbb Z)$ of $\mathcal{N}\mathcal{C}$ with integral coefficient is the homology of the chain complex $\mathbb Z[\mathcal{N}_*\mathcal{C}]$ with differentials $d_n=\sum_{0}^{n}(-1)^id_i^n$. The cohomology $H^*(\mathcal{N}\mathcal{C};\mathbb Z)$ is the cohomology of the cochain complex $\Hom(\mathbb Z[\mathcal{N}_*\mathcal{C}];\mathbb Z)$. 

We denote $|\mathcal{C}|$ the geometric realization of the simplicial set 
$\mathcal{N}\mathcal{C}$, this is a topological space constructed from $\mathcal{N}\mathcal{C}$. We have the identifications $H_*(\mathcal{N}\mathcal{C};\mathbb Z)\cong H_*(|\mathcal{C}|;\mathbb Z)$ and $H^*(\mathcal{N}\mathcal{C};\mathbb Z)\cong H^*(|\mathcal{C}|;\mathbb Z)$ (see \protect{\cite[example 8.2.3]{weibel}}). There is a bijection between $\mathcal{N}_n\mathcal{C}$ and $\mathcal{N}_n\mathcal{C}^{op}$ that induces an homeomorphism between $|\mathcal{C}|$ and $|\mathcal{C}^{op}|$. We refer to \cite{simplicial} for more details on simplicial sets, their realization and homology.

A small category $\mathcal{C}$ is said contractible if $|\mathcal{C}|$ has the same homotopy type of a point. A category with an initial or a terminal object is contractible. Let $\mathcal{C}$ a small category. The comma category or the fiber category $\mathcal{C}|x$ of objects over $x$ is the category whose objects are morphisms $c\stackrel{\alpha}{\rightarrow} x$ in $\mathcal{C}$ and a morphism from $c\stackrel{\alpha}{\rightarrow} x$ to $c'\stackrel{\beta}{\rightarrow} x$ is a morphism $c\stackrel{\gamma}{\rightarrow}c$ in $\mathcal{C}$ such that $\beta \circ \gamma=\alpha$ (see \protect{\cite[Chapter II.6]{categories}}). The identity morphism of $x$ is a terminal object of $\mathcal{C}|x$ hence $\mathcal{C}|x$ is contractible for each $x\in \text{Ob}(\mathcal{C})$.

We define a complex $\mathcal{B}_{*}^{\mathcal{C}}$ of $\mathcal{C}$-module by
   $$\mathcal{B}_{*}^{\mathcal{C}}:=\mathbb Z[\mathcal{N}_*(\mathcal{C}|-)].$$
 Since the fiber categories $\mathcal{C}|x$ are contractible for each $x\in \text{Ob}(\mathcal{C})$, this complex is a resolution of the constant functor $\underline{\mathbb{Z}}$.
 
 To give a chain of length $n$ in $\mathcal{N}_n(\mathcal{C}|x)$ is equivalent to give a chain $x_0\rightarrow \cdots \rightarrow x_n$ of length $n$ in  $\mathcal{N}_n\mathcal{C}$ and a morphism $x_n\rightarrow  x$. This correspondence induces  an isomorphism
  $$\mathcal{B}_{n}^{\mathcal{C}}\cong  \bigoplus_{x_0\rightarrow \cdots \rightarrow x_n \in \mathcal{N}_n\mathcal{C}}P^{\mathcal{C}}_{x_n}.$$
It follows that $\mathcal{B}_{*}^{\mathcal{C}}$ is a projective resolution of the constant functor $\underline{\mathbb{Z}}$ (called Bar resolution in \cite{xu}) and we have $\operatorname{H}^*(\mathcal{C};F)=\operatorname{H}^*(\Hom(\mathcal{B}_{*}^{\mathcal{C}},F))$. Moreover, the isomorphism $\Hom(\mathcal{B}_{*}^{\mathcal{C}},\underline{\mathbb{Z}}))\cong \Hom(\mathbb Z[\mathcal{N}_*\mathcal{C}];\mathbb Z)$ induces an isomorphism  $\operatorname{H}^*(\mathcal{C};\underline{\mathbb{Z}})\cong H^*(|\mathcal{C}|,\mathbb Z)$. 

The same reasoning replacing $\mathcal{C}$ by the opposite category $\mathcal{C}^{op}$ allows us to explicit a projective resolution of the constant $\mathcal{C}^{op}$-module $\underline{\mathbb{Z}}$ and to identify  $\operatorname{H}_*(\mathcal{C};\underline{\mathbb{Z}})$ to $H_*(|\mathcal{C}|,\mathbb Z)$.

The reduced (co)homology $\widetilde{\operatorname{H}}(\mathcal{C},-)$ of a small category $\mathcal{C}$ is the (co)homology calculated when replacing the complex $\mathcal{B}_{*}^{\mathcal{C}}$ by the augmented complex $\mathcal{B}_{*}^{\mathcal{C}}\rightarrow \underline{\mathbb{Z}}$.

Let $i:\mathcal{C'}\hookrightarrow \mathcal{C}$ the inclusion functor of a full subcategory $\mathcal{C'}$ in $\mathcal{C}$. The relative cohomology with coefficient in a functor $F$ is defined by
$$\operatorname{H}^*(\mathcal{C},\mathcal{C'};F):=\operatorname{H}^*(\Hom(\frac{\mathcal{B}_{*}^{\mathcal{C}}}{\mathcal{B}_{*}^{\mathcal{C'}}},F))$$ and provides a long exact sequence 
$$\cdots \rightarrow \operatorname{H}^i(\mathcal{C};F) \rightarrow \operatorname{H}^i(\mathcal{C'};F\circ i) \rightarrow \operatorname{H}^{i+1}(\mathcal{C},\mathcal{C'};F) \rightarrow \operatorname{H}^{i+1}(\mathcal{C};F)\rightarrow \cdots.$$
\end{recalls}
\begin{notations}
Let $x$ and $y$ two objects of $\mathcal{C}$ (not necessary an ordered set), we say that $x\leq y$  if there exists a morphism from $x$ to $y$ in $\mathcal{C}$. We say that $x$ and $y$ are joinable if it exists an object $z$ such that $x\leq z \geq y$.
\begin{itemize} 
\item We denote $\widetilde{\mathcal{C}}_x$ the full subcategory of $\mathcal{C}$ whose objects are joinable to $x$.
\item We denote $\mathcal{C}_x$ the full subcategory of $\widetilde{\mathcal{C}}_x$ whose objects are $\ngeq x$.
\item We denote $\mathcal{C}_{\leq x}$ (resp. $\mathcal{C}_{<x}$ , $\mathcal{C}_{\geq x}$) the full subcategory of $\mathcal{C}$ whose objects are $\leq x$ (resp. $<x$ , $\geq x$).
\end{itemize}
\end{notations}

\begin{proposition}
Let $\mathcal{C}$ be a small category and $x\in \text{Ob}(\mathcal{C})$. Then 
$$\operatorname{H}^*(\mathcal{C};P_x^{\mathcal{C}})\cong \operatorname{H}^*(\widetilde{\mathcal{C}}_x;P_x^{\widetilde{\mathcal{C}}_x}).$$
\end{proposition}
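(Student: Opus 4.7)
The plan is to compute both sides using the Bar resolutions recalled above and show that the resulting cochain complexes are naturally isomorphic.

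First, I would write down the Bar resolution $\mathcal{B}_*^{\mathcal{C}} \to \underline{\mathbb{Z}}$ in $\cmod$ and the analogous resolution $\mathcal{B}_*^{\widetilde{\mathcal{C}}_x} \to \underline{\mathbb{Z}}$ in $\widetilde{\mathcal{C}}_x\mathrm{-mod}$, both of which are projective resolutions of the constant functor. Using the decomposition
$$\mathcal{B}_n^{\mathcal{C}} \cong \bigoplus_{x_0\to\cdots\to x_n \in \mathcal{N}_n\mathcal{C}} P_{x_n}^{\mathcal{C}}$$
together with the Yoneda isomorphism, I would identify
$$\Hom_{\cmod}(\mathcal{B}_n^{\mathcal{C}},P_x^{\mathcal{C}}) \cong \prod_{x_0\to\cdots\to x_n \in \mathcal{N}_n\mathcal{C}} \mathbb{Z}[\Hom_{\mathcal{C}}(x,x_n)].$$

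The key observation, and really the heart of the proof, is the following joinability remark: if a factor above is non-zero, there must exist a morphism $f\colon x\to x_n$. Then for every index $i$ the composition along the chain yields a morphism $x_i\to x_n$, so $x_n$ witnesses that each $x_i$ is joinable to $x$, hence $x_i\in \widetilde{\mathcal{C}}_x$. Therefore only chains lying entirely in $\widetilde{\mathcal{C}}_x$ contribute non-trivially. Since $\widetilde{\mathcal{C}}_x$ is a full subcategory of $\mathcal{C}$ containing $x$, the Hom-sets $\Hom_{\mathcal{C}}(x,x_n)$ coincide with $\Hom_{\widetilde{\mathcal{C}}_x}(x,x_n)$ for such $x_n$, and we obtain a natural identification
$$\Hom_{\cmod}(\mathcal{B}_n^{\mathcal{C}},P_x^{\mathcal{C}}) \cong \Hom_{\widetilde{\mathcal{C}}_x\mathrm{-mod}}(\mathcal{B}_n^{\widetilde{\mathcal{C}}_x}, P_x^{\widetilde{\mathcal{C}}_x}).$$

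Finally I would check that the simplicial face maps on $\mathcal{N}_\bullet\mathcal{C}$ restrict to those of $\mathcal{N}_\bullet\widetilde{\mathcal{C}}_x$ on the subset of contributing chains, so the differentials correspond under the identification. This gives an isomorphism of cochain complexes, and passing to cohomology yields the claim. The only delicate step is the joinability argument; once that is established, the remainder is bookkeeping about naturality and compatibility of the Bar differentials with the inclusion $\widetilde{\mathcal{C}}_x\hookrightarrow \mathcal{C}$.
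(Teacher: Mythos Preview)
Your proof is correct and follows essentially the same approach as the paper. The paper packages the joinability observation as the vanishing of $\Hom(\mathcal{B}_*^{\mathcal{C}}/\mathcal{B}_*^{\widetilde{\mathcal{C}}_x},P_x^{\mathcal{C}})$ coming from a short exact sequence of Bar complexes, whereas you identify directly which summands of $\Hom(\mathcal{B}_n^{\mathcal{C}},P_x^{\mathcal{C}})$ survive; the underlying argument---that a chain with $\Hom_{\mathcal{C}}(x,x_n)\neq\varnothing$ must lie entirely in $\widetilde{\mathcal{C}}_x$---is identical.
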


\begin{proof}~~\\
There is an exact sequence of $\mathcal{C}$-modules $
 0\rightarrow \mathcal{B}_{*}^{\widetilde{\mathcal{C}}_x} \rightarrow \mathcal{B}_{*}^{\mathcal{C}} \rightarrow \mathcal{B}_{*}^{\mathcal{C}}/\mathcal{B}_{*}^{\widetilde{\mathcal{C}}_x} \rightarrow 0$. Since $\mathcal{B}_{*}^{\mathcal{C}}/\mathcal{B}_{*}^{\widetilde{\mathcal{C}}_x}$ is projective, by applying the functor $\Hom(-,P_x^{\mathcal{C}})$, we obtain the exact sequence
$$
 0\rightarrow \Hom(\mathcal{B}_{*}^{\mathcal{C}}/\mathcal{B}_{*}^{\widetilde{\mathcal{C}}_x},P_x^{\mathcal{C}}) \rightarrow \Hom(\mathcal{B}_{*}^{\mathcal{C}},P_x^{\mathcal{C}})\rightarrow \Hom(\mathcal{B}_{*}^{\widetilde{\mathcal{C}}_x},P_x^{\mathcal{C}})\rightarrow 0
.$$
For $i\in \mathbb{N}$, we have
$$\displaystyle \Hom_{}(\mathcal{B}_{i}^{\mathcal{C}}/\mathcal{B}_{i}^{\widetilde{\mathcal{C}}_x},P_x^{\mathcal{C}}) \cong \prod_{x_0\rightarrow \cdots \rightarrow x_i \notin \mathcal{N}_i\widetilde{\mathcal{C}}_x}P_x^{\mathcal{C}}(x_i)=0,$$ and therefore the isomorphism
$\Hom(\mathcal{B}_{*}^{\mathcal{C}};P_x^{\mathcal{C}}) \cong \Hom(\mathcal{B}_{*}^{\widetilde{\mathcal{C}}_x},P_x^{\mathcal{C}}).$
\end{proof}
\begin{proposition}\label{p1}
Let $\mathcal{C}$ be a small category and $x\in \text{Ob}(\mathcal{C})$. Then 
\begin{equation}\operatorname{H}^*(\mathcal{C};P_x^{\mathcal{C}})\cong\operatorname{H}^*(\widetilde{\mathcal{C}}_x,\mathcal{C}_x;P_x^{\widetilde{\mathcal{C}}_x}).\label{iso1}\end{equation}
\end{proposition}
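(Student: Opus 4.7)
The plan is to combine the previous proposition (which gives $\operatorname{H}^*(\mathcal{C};P_x^{\mathcal{C}})\cong \operatorname{H}^*(\widetilde{\mathcal{C}}_x;P_x^{\widetilde{\mathcal{C}}_x})$) with the long exact sequence of the pair $(\widetilde{\mathcal{C}}_x,\mathcal{C}_x)$ stated in the Recalls. It therefore suffices to prove the isomorphism
$$\operatorname{H}^*(\widetilde{\mathcal{C}}_x;P_x^{\widetilde{\mathcal{C}}_x})\cong \operatorname{H}^*(\widetilde{\mathcal{C}}_x,\mathcal{C}_x;P_x^{\widetilde{\mathcal{C}}_x}).$$

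Writing $i:\mathcal{C}_x\hookrightarrow \widetilde{\mathcal{C}}_x$ for the inclusion, the long exact sequence of relative cohomology reads
$$\cdots \to \operatorname{H}^i(\widetilde{\mathcal{C}}_x,\mathcal{C}_x;P_x^{\widetilde{\mathcal{C}}_x})\to \operatorname{H}^i(\widetilde{\mathcal{C}}_x;P_x^{\widetilde{\mathcal{C}}_x})\to \operatorname{H}^i(\mathcal{C}_x;P_x^{\widetilde{\mathcal{C}}_x}\circ i)\to \operatorname{H}^{i+1}(\widetilde{\mathcal{C}}_x,\mathcal{C}_x;P_x^{\widetilde{\mathcal{C}}_x})\to\cdots,$$
so the desired isomorphism will follow once we show that the middle term $\operatorname{H}^i(\mathcal{C}_x;P_x^{\widetilde{\mathcal{C}}_x}\circ i)$ vanishes for every $i$.

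This vanishing is immediate from the definitions. By construction $\mathcal{C}_x$ consists of those objects $y$ of $\widetilde{\mathcal{C}}_x$ satisfying $y\ngeq x$, i.e.\ such that $\Hom_{\mathcal{C}}(x,y)=\emptyset$. Since $\widetilde{\mathcal{C}}_x$ is a full subcategory of $\mathcal{C}$, we have $\Hom_{\widetilde{\mathcal{C}}_x}(x,y)=\emptyset$ as well, and therefore
$$P_x^{\widetilde{\mathcal{C}}_x}(y)=\mathbb{Z}[\Hom_{\widetilde{\mathcal{C}}_x}(x,y)]=0\qquad \text{for every } y\in \operatorname{Ob}(\mathcal{C}_x).$$
Hence the restricted functor $P_x^{\widetilde{\mathcal{C}}_x}\circ i$ is identically zero on $\mathcal{C}_x$, and its cohomology vanishes in all degrees.

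The main step is really the observation in the previous paragraph; no substantial obstacle is expected, the argument is formal once one has the relative long exact sequence and the definition of $\mathcal{C}_x$ at hand. Combining this vanishing with the long exact sequence yields $\operatorname{H}^i(\widetilde{\mathcal{C}}_x;P_x^{\widetilde{\mathcal{C}}_x})\cong \operatorname{H}^i(\widetilde{\mathcal{C}}_x,\mathcal{C}_x;P_x^{\widetilde{\mathcal{C}}_x})$, and then the previous proposition finishes the proof.
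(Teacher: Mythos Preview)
Your proof is correct and follows essentially the same approach as the paper: both use the previous proposition and then exploit the key observation that $P_x^{\widetilde{\mathcal{C}}_x}$ vanishes on every object of $\mathcal{C}_x$. The only cosmetic difference is that the paper works at the cochain level (showing $\Hom(\mathcal{B}_*^{\mathcal{C}_x},P_x^{\mathcal{C}})=0$ and hence obtaining an isomorphism of cochain complexes), whereas you phrase the same vanishing as $P_x^{\widetilde{\mathcal{C}}_x}\circ i=0$ and feed it into the long exact sequence.
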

\begin{proof}
The proof is analogous to the previous one. There is an exact sequence of $\mathcal{C}$-modules $0\rightarrow \mathcal{B}_{*}^{\mathcal{C}_x} \rightarrow \mathcal{B}_{*}^{\widetilde{\mathcal{C}}_x}\rightarrow \mathcal{B}_{*}^{\widetilde{\mathcal{C}}_x}/\mathcal{B}_{*}^{\mathcal{C}_x}\rightarrow 0.$
Apply the functor $\Hom(-,P_x^{\mathcal{C}})$, we obtain the exact sequence 
$$0\rightarrow \Hom(\mathcal{B}_{*}^{\widetilde{\mathcal{C}}_x}/\mathcal{B}_{*}^{\mathcal{C}_x},P_x^{\mathcal{C}}) \rightarrow \Hom(\mathcal{B}_{*}^{\widetilde{\mathcal{C}}_x},P_x^{\mathcal{C}})\rightarrow \Hom(\mathcal{B}_{*}^{\mathcal{C}_x},P_x^{\mathcal{C}})\rightarrow 0.$$
 for $i\in \mathbb{N}$, we have
$$\displaystyle \Hom_{}(\mathcal{B}_{i}^{\mathcal{C}_x},P_x^{\mathcal{C}}) \cong \prod_{x_0\rightarrow \cdots \rightarrow x_i \in \mathcal{N}_i\mathcal{C}_x}P_x^{\mathcal{C}}(x_i)=0,$$
 and therefore the isomorphism
$\Hom(\mathcal{B}_{*}^{\widetilde{\mathcal{C}}_x}/\mathcal{B}_{*}^{\mathcal{C}_x};P_x^{\mathcal{C}}) \cong \Hom(\mathcal{B}_{*}^{\widetilde{\mathcal{C}}_x},P_x^{\mathcal{C}}).$

\end{proof}

\bull{Partially ordered set.}
\medskip

In this paragraph, we give a condition for a simplicial poset to be a duality category.
\begin{recalls}[simplicial complexes]

A simplicial complex on a vertex set $V$ is a collection $\Delta$ of non-empty finite subsets of $V$ such that $\{v\}\in \Delta$ for all $v\in V$ and any non-empty subset of  $A\in \Delta$ is in $\Delta$. Call simplex an element of $\Delta$. The dimension of a simplex $F$ is defined by $\dim F=|F|-1$. The dimension of $\Delta$ is the maximum dimension of its elements.
\medskip

Let $A$ a simplex of a simplicial complex $\Delta$, the link of $A$ is the simplicial subcomplex of simplices $B$ joinable to $A$ and disjoint from $A$:
$$\operatorname{link}_A=\left\{B; A\cup B \in \Delta\text{ and } A\cap B=\emptyset \right\}.$$

A partially ordered set, shortened poset, is a category such that between two objects there is at most one morphism. To any simplicial complex $\Delta$, we  associate a poset $P(\Delta)$, whose objects are simplices of $\Delta$ and whose morphisms are inclusions. A poset $P$ is called simplicial if there exists a simplicial complex $\Delta$ such that $P\cong P(\Delta)$. 
The simplicial join of two simplicial complexes $\Delta$ and $\Delta'$ is the simplicial complex
$$\Delta \ast \Delta'=\left \{ A\cup B; A \in \Delta,~B\in \Delta'\right \}\cup \Delta \cup \Delta'.$$

Let $P=P(\Delta)$ and $Q=P(\Delta')$ be two simplicial posets. Call simplicial join the simplicial poset $P\ast Q:=P(\Delta\cup\Delta')$.
We denote $|\Delta|$ the geometric realization of a simplicial complex $\Delta$. The barycentric subdivision of a poset $P$ is a simplicial poset which we denote $\Delta(P)$. The geometric realization of $P$ is the one of $\Delta(P)$.
\end{recalls} 

For $\mathcal{C}$ a poset, the proposition \ref{p1} takes the following simple form:

\begin{proposition}\label{pair}
If $\mathcal{C}$ is a poset, then the isomorphism (\ref{iso1}) takes the simple form
\begin{equation}\operatorname{H}^*(\mathcal{C};P_x^{\mathcal{C}}) \cong \operatorname{H}^*(\widetilde{\mathcal{C}}_x,\mathcal{C}_x;\underline{\mathbb{Z}}).\label{eq3}\end{equation}
If, in addition, $\widetilde{\mathcal{C}}_x$ is contractible then
\begin{equation}\operatorname{H}^*(\mathcal{C},P_x^{\mathcal{C}}) \cong \widetilde{\operatorname{H}}^{*-1}(\mathcal{C}_x;\underline{\mathbb{Z}}).\label{eq4}\end{equation}
\end{proposition}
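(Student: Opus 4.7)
The plan is to derive both isomorphisms from Proposition~\ref{p1}, which already gives
$$\operatorname{H}^*(\mathcal{C};P_x^{\mathcal{C}})\cong \operatorname{H}^*(\widetilde{\mathcal{C}}_x,\mathcal{C}_x;P_x^{\widetilde{\mathcal{C}}_x}).$$
So the content of (\ref{eq3}) is really the identification
$\operatorname{H}^*(\widetilde{\mathcal{C}}_x,\mathcal{C}_x;P_x^{\widetilde{\mathcal{C}}_x})
\cong \operatorname{H}^*(\widetilde{\mathcal{C}}_x,\mathcal{C}_x;\underline{\mathbb{Z}})$,
and I would prove this at the level of relative bar cochain complexes. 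Because $\mathcal{C}$ is a poset, for every object $y$ of $\widetilde{\mathcal{C}}_x$ the value $P_x^{\widetilde{\mathcal{C}}_x}(y)$ is $\mathbb{Z}$ if $y\geq x$ and $0$ otherwise, and all the structure maps between two objects $\geq x$ are the identity of $\mathbb{Z}$. Hence $P_x^{\widetilde{\mathcal{C}}_x}$ and $\underline{\mathbb{Z}}$ agree as functors on the full subcategory $(\widetilde{\mathcal{C}}_x)_{\geq x}$.

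Now I would examine $\mathcal{B}_{n}^{\widetilde{\mathcal{C}}_x}/\mathcal{B}_{n}^{\mathcal{C}_x}$. By definition it is the direct sum of the $P^{\widetilde{\mathcal{C}}_x}_{x_n}$ over chains $x_0\to\cdots\to x_n$ in $\widetilde{\mathcal{C}}_x$ not lying entirely in $\mathcal{C}_x$, i.e.\ such that some $x_i\geq x$. Since the poset chains are increasing and $x_i\leq x_n$, such a chain automatically satisfies $x_n\geq x$. Taking $\Hom(-,F)$ against either $F=P_x^{\widetilde{\mathcal{C}}_x}$ or $F=\underline{\mathbb{Z}}$ yields, in degree $n$,
$$\prod_{\substack{x_0\to\cdots\to x_n\in\mathcal N_n\widetilde{\mathcal{C}}_x\\ x_n\geq x}}F(x_n),$$
and because each such $F(x_n)$ is canonically $\mathbb{Z}$ with the transition maps induced by morphisms $\geq x\to\geq x$ being identities on both sides, the two cochain complexes are literally equal. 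This gives (\ref{eq3}).

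For (\ref{eq4}), once $\underline{\mathbb{Z}}$ is in play I would just feed the pair $(\widetilde{\mathcal{C}}_x,\mathcal{C}_x)$ into the long exact sequence of relative cohomology of a full subcategory, recalled immediately before the statement of Proposition~\ref{p1}. Contractibility of $\widetilde{\mathcal{C}}_x$ gives $\operatorname{H}^i(\widetilde{\mathcal{C}}_x;\underline{\mathbb{Z}})\cong H^i(|\widetilde{\mathcal{C}}_x|;\mathbb{Z})=0$ for $i>0$ and $\mathbb{Z}$ for $i=0$, so for $i\geq 2$ the long exact sequence collapses to
$\operatorname{H}^i(\widetilde{\mathcal{C}}_x,\mathcal{C}_x;\underline{\mathbb{Z}})\cong \operatorname{H}^{i-1}(\mathcal{C}_x;\underline{\mathbb{Z}})=\widetilde{\operatorname{H}}^{i-1}(\mathcal{C}_x;\underline{\mathbb{Z}})$. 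In degrees $0$ and $1$ the exact sequence reduces to
$$0\to \operatorname{H}^0(\widetilde{\mathcal{C}}_x,\mathcal{C}_x)\to \mathbb{Z}\to \operatorname{H}^0(\mathcal{C}_x)\to \operatorname{H}^1(\widetilde{\mathcal{C}}_x,\mathcal{C}_x)\to 0,$$
and the map $\mathbb{Z}\to \operatorname{H}^0(\mathcal{C}_x)$ is the diagonal inclusion coming from restriction from the contractible $\widetilde{\mathcal{C}}_x$, whose kernel and cokernel are exactly $\widetilde{\operatorname{H}}^{-1}(\mathcal{C}_x;\underline{\mathbb{Z}})$ and $\widetilde{\operatorname{H}}^0(\mathcal{C}_x;\underline{\mathbb{Z}})$ by the paper's definition of reduced cohomology via the augmented bar complex.

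The only subtle step is the identification in paragraph~2: one has to verify that the face maps $d_i^n$ in the bar resolution restrict compatibly, and that the isomorphism $F(x_n)\cong \mathbb{Z}$ is natural in $n$ so that the differentials on the two relative cochain complexes coincide on the nose. Everything else is routine bookkeeping with the long exact sequence of a pair and the hypothesis that $\widetilde{\mathcal{C}}_x$ is contractible.
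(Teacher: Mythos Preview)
Your argument is correct and reaches the same conclusion as the paper, but the mechanism for (\ref{eq3}) is packaged a little differently. The paper introduces the auxiliary $\widetilde{\mathcal{C}}_x$-module $\underline{\mathbb{Z}}^{\mathcal{C}_x}$ (equal to $\mathbb{Z}$ on $\mathcal{C}_x$ and $0$ elsewhere), observes that being a poset gives a short exact sequence
\[
0 \to \underline{\mathbb{Z}}^{\mathcal{C}_x} \to \underline{\mathbb{Z}} \to P_x^{\widetilde{\mathcal{C}}_x} \to 0,
\]
applies $\Hom(\mathcal{B}_*^{\widetilde{\mathcal{C}}_x}/\mathcal{B}_*^{\mathcal{C}_x},-)$, and kills the left-hand term by the same computation $\underline{\mathbb{Z}}^{\mathcal{C}_x}(x_n)=0$ for $x_n\geq x$. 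Your direct identification of the two relative cochain complexes is the same underlying observation (only summands with $x_n\geq x$ survive, and there $P_x$ and $\underline{\mathbb{Z}}$ agree); the advantage of the paper's short exact sequence is that the map of cochain complexes is induced functorially by $\underline{\mathbb{Z}}\twoheadrightarrow P_x^{\widetilde{\mathcal{C}}_x}$, so compatibility with differentials is automatic and the check you flag as ``subtle'' in your last paragraph is not needed. For (\ref{eq4}) both you and the paper invoke the long exact sequence of the pair, so there is no difference there.
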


\begin{proof}
Let $\underline{\mathbb{Z}}^{\mathcal{C}_x}$ be the $\widetilde{\mathcal{C}}_x$-module, defined by $\underline{\mathbb{Z}}^{\mathcal{C}_x}(a)=\mathbb{Z}$ if $a\in \text{Ob}(\mathcal{C}_x)$ and $0$ otherwise. If $\mathcal{C}$ is a poset \textit{i.e.} there is at most one morphism between two objects, then we have the exact sequence of $\widetilde{\mathcal{C}}_x$-modules:
$$0\rightarrow \underline{\mathbb{Z}}^{\mathcal{C}_x} \rightarrow \underline{\mathbb{Z}}\rightarrow P_x^{\widetilde{\mathcal{C}}_x} \rightarrow 0.$$
Apply the functor $\Hom(\mathcal{B}_{*}^{\widetilde{\mathcal{C}}_x}/\mathcal{B}_{*}^{\mathcal{C}_x},-)$, we obtain the exact sequence
$$0\rightarrow \Hom(\mathcal{B}_{*}^{\widetilde{\mathcal{C}}_x}/\mathcal{B}_{*}^{\mathcal{C}_x},\underline{\mathbb{Z}}^{\mathcal{C}_x}) \rightarrow \Hom(\mathcal{B}_{*}^{\widetilde{\mathcal{C}}_x}/\mathcal{B}_{*}^{\mathcal{C}_x},\underline{\mathbb{Z}})\rightarrow \Hom(\mathcal{B}_{*}^{\widetilde{\mathcal{C}}_x}/\mathcal{B}_{*}^{\mathcal{C}_x},P_x^{\widetilde{\mathcal{C}}_x})\rightarrow 0.$$
 For $i\in \mathbb{N}$, we have
$$\displaystyle \Hom_{}(\mathcal{B}_{i}^{\widetilde{\mathcal{C}}_x}/\mathcal{B}_{i}^{\mathcal{C}_x},\underline{\mathbb{Z}}^{\mathcal{C}_x}) \cong \prod_{x_0\rightarrow \cdots \rightarrow x_i \notin \mathcal{N}_i\mathcal{C}_x}\underline{\mathbb{Z}}^{\mathcal{C}_x}(x_i)=0,$$
 and therefore the isomorphism
$$\Hom(\mathcal{B}_{*}^{\widetilde{\mathcal{C}}_x}/\mathcal{B}_{*}^{\mathcal{C}_x},\underline{\mathbb{Z}})\stackrel{\approx}{\rightarrow} \Hom(\mathcal{B}_{*}^{\widetilde{\mathcal{C}}_x}/\mathcal{B}_{*}^{\mathcal{C}_x},P_x^{\widetilde{\mathcal{C}}_x}).$$
The isomorphism (\ref{eq3}) follows by taking cohomology. The long exact sequence of reduced cohomology associated to the pair $(\widetilde{\mathcal{C}}_x,\mathcal{C}_x)$ gives the second isomorphism.

\end{proof}

\begin{example}
Let $\mathcal{P}$ the following poset 
$$\xymatrix{
0 \ar[rd] \ar[d] &1 \ar[ld] \ar[d]\\
2  &3}$$
then
$$\widetilde{\mathcal{P}}_0=\widetilde{\mathcal{P}}_1=\mathcal{P} \text{    ,    } \mathcal{P}_0=1\text{   and   } \mathcal{P}_1=0.$$
We are in the situation where $ \mathcal{P}_0$ and $ \mathcal{P}_1$ are contractible, so
$$\operatorname{H}^1(\widetilde{\mathcal{P}_0},\mathcal{P}_0;\underline{\mathbb{Z}})=\operatorname{H}^1(\widetilde{\mathcal{P}_1},\mathcal{P}_1;\underline{\mathbb{Z}})=\mathbb{Z} \text{   and   } \operatorname{H}^0(\widetilde{\mathcal{P}_0},\mathcal{P}_0;\underline{\mathbb{Z}})=\operatorname{H}^0(\widetilde{\mathcal{P}_1},\mathcal{P}_1;\underline{\mathbb{Z}})=0.$$
Moreover, we have

$$\widetilde{\mathcal{P}}_2=\xymatrix{
0 \ar[d]  &1 \ar[ld] \\
2  &} ~~~~\text{      and      }~~~~\widetilde{\mathcal{P}}_3=\xymatrix{
0 \ar[rd]  &1 \ar[d]\\
  &3}$$ $\mathcal{P}_2$ and $\mathcal{P}_3$ consist of two points $0$ and $1$. We are in the situation where $\widetilde{\mathcal{P}}_2$ and $\widetilde{\mathcal{P}}_3$ are contractible. We find that 

$$\operatorname{H}^1(\widetilde{\mathcal{P}_2},\mathcal{P}_2;\underline{\mathbb{Z}})=\operatorname{H}^1(\widetilde{\mathcal{P}_3},\mathcal{P}_3;\underline{\mathbb{Z}})=\mathbb{Z} \text{    and    }
\operatorname{H}^0(\widetilde{\mathcal{P}_2},\mathcal{P}_2;\underline{\mathbb{Z}})=\operatorname{H}^0(\widetilde{\mathcal{P}_3},\mathcal{P}_3;\underline{\mathbb{Z}})=0.$$
The poset $\mathcal{P}$ is a duality category and the dualizing functor $D^1(\underline{\mathbb{Z}})$ is the constant module $\underline{\mathbb{Z}}$.
\end{example}

\begin{lemma}\label{join}
If $\mathcal{P}$ is a simplicial poset then
$$\widetilde{\mathcal{P}}_x \cong \mathcal{P}_{\leq x} \ast \operatorname{link}_x \text{   and   }\mathcal{P}_x \cong \mathcal{P}_{<x} \ast \operatorname{link}_x.$$
\end{lemma}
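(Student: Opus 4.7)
The strategy is to exhibit the claimed poset isomorphism by decomposing each simplex of $\widetilde{\mathcal{P}}_x$ along $x$. Given a simplex $y\in \widetilde{\mathcal{P}}_x$, joinability with $x$ means $y\cup x\in\Delta$; I would split $y$ into $A := y\cap x$ and $B := y\setminus x$. Then $A$ is either empty or a face of $x$ (hence an object of $\mathcal{P}_{\leq x}$), while $B$ is either empty or satisfies $B\cap x=\emptyset$ and $B\cup x = y\cup x\in\Delta$ (hence an object of $\operatorname{link}_x$). Since $y$ is non-empty, at least one of $A,B$ is non-empty.

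Conversely, any pair $(A,B)$ with $A\in\mathcal{P}_{\leq x}\cup\{\emptyset\}$ and $B\in\operatorname{link}_x\cup\{\emptyset\}$ not both empty yields a simplex $y=A\cup B$ joinable with $x$, because $y\cup x = B\cup x$, which belongs to $\Delta$ in both the case $B=\emptyset$ (it equals $x$) and the case $B\in\operatorname{link}_x$. Comparing with the definition of simplicial join recalled in the excerpt, namely $\Delta_{\leq x} \ast \operatorname{link}_x = \{A\cup B : A\in\Delta_{\leq x},\ B\in\operatorname{link}_x\} \cup \Delta_{\leq x} \cup \operatorname{link}_x$, this decomposition exactly parametrises the simplices of $\Delta_{\leq x}\ast \operatorname{link}_x$, and by definition $\mathcal{P}_{\leq x}\ast \operatorname{link}_x = P(\Delta_{\leq x}\ast \operatorname{link}_x)$.

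To promote this object-level bijection to an isomorphism of posets, I would observe that for two simplices $y,y'$ joinable with $x$, the inclusion $y\subseteq y'$ holds if and only if $y\cap x\subseteq y'\cap x$ and $y\setminus x\subseteq y'\setminus x$; this is precisely the order on the simplicial join, where the order is componentwise inclusion in each factor. Hence the bijection is order-preserving in both directions.

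For the second statement, I would note that $y\supseteq x$ if and only if $A = y\cap x = x$, so the condition $y\ngeq x$ defining $\mathcal{P}_x$ corresponds exactly to requiring $A$ to be a proper face of $x$, that is, $A\in\mathcal{P}_{<x}\cup\{\emptyset\}$. Restricting the bijection above to such pairs yields $\mathcal{P}_x\cong \mathcal{P}_{<x}\ast \operatorname{link}_x$. The only mildly delicate point in the argument is the bookkeeping of the empty set, which is not a simplex but occurs as a degenerate component in the decomposition; the explicit form of the join given in the preliminaries (appending $\Delta$ and $\Delta'$ to the cross-union terms) absorbs this, so I expect no genuine obstacle beyond this careful tracking.
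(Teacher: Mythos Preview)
Your proof is correct and follows exactly the approach of the paper: decompose each simplex $y$ joinable with $x$ as the disjoint union of $y\cap x$ and $y\setminus x$, and use this to build the bijection with the join. The paper's proof is considerably terser (it essentially states only the forward decomposition), so your version actually supplies details the paper omits, namely the inverse map, the order-compatibility, the restriction yielding the second isomorphism, and the empty-set bookkeeping.
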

\begin{proof}
Let $\Delta$ be a simplicial complex such that $\mathcal{P} \cong P(\Delta)$. We can see via this isomorphism objects of $P$ as finite sets and morphisms as inclusions. Let $z$ an object of $\widetilde{\mathcal{P}}_x$, viewed as a finite set, is the disjoint union of $z\cap x$ and $z\backslash x$. The isomorphisms of the lemma are  constructed by associating to $z$ the object $(z\cap x)\cup (z\backslash x)$ of $\mathcal{P}_{\leq x} \ast \operatorname{link}_x$.
\end{proof}

\begin{theorem}\label{th1}
Let $\mathcal{P}$ a finite simplicial poset. If there exists an integer $n$ such that for all objects $x$ of $\mathcal{P}$
$$\widetilde{ \operatorname{H}}^{i}(\operatorname{link}_{x};\underline{\mathbb Z})=0 \text{   for   } i\neq n-\dim(x)-1$$ then $\mathcal{P}$ is a duality category.
\end{theorem}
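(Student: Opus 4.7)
The plan is to verify condition~(2) of Theorem~\ref{dualite} applied to the constant functor $\underline{\mathbb{Z}}$: show that $\operatorname{Ext}^i_{\mathcal{P}\mathrm{-mod}}(\underline{\mathbb{Z}},P_x^{\mathcal{P}})=0$ for every $x\in\mathrm{Ob}(\mathcal{P})$ and every $i\neq n$. The $FP$ hypothesis of that theorem is automatic here, since $\mathcal{P}$ is a finite simplicial poset: the normalized order-complex version of the bar resolution provides a bounded resolution of $\underline{\mathbb{Z}}$ by finite direct sums of standard projectives. Fix $x\in\mathrm{Ob}(\mathcal{P})$. Because $\mathcal{P}$ is simplicial, $\mathcal{P}_{\leq x}$ has $x$ as a terminal object and is contractible, so by Lemma~\ref{join} the subcategory $\widetilde{\mathcal{P}}_x\cong \mathcal{P}_{\leq x}\ast \operatorname{link}_x$ is contractible as a join with a contractible factor. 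The second half of Proposition~\ref{pair} therefore gives
$$\operatorname{H}^i(\mathcal{P};P_x^{\mathcal{P}})\cong \widetilde{\operatorname{H}}^{i-1}(\mathcal{P}_x;\underline{\mathbb{Z}}).$$

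To compute the right-hand side, I would apply Lemma~\ref{join} again to write $\mathcal{P}_x\cong \mathcal{P}_{<x}\ast \operatorname{link}_x$. Since $x$ is a simplex of dimension $\dim(x)$, the poset $\mathcal{P}_{<x}$ consists of the proper faces of $x$, whose realization is the boundary of a $\dim(x)$-simplex, homeomorphic to $S^{\dim(x)-1}$ (with the convention $S^{-1}=\emptyset$ when $\dim(x)=0$). The standard equivalence $S^{d-1}\ast Y\simeq \Sigma^d Y$ together with the suspension isomorphism in reduced cohomology then yields
$$\widetilde{\operatorname{H}}^{i-1}(\mathcal{P}_x;\underline{\mathbb{Z}})\cong \widetilde{\operatorname{H}}^{i-1-\dim(x)}(\operatorname{link}_x;\underline{\mathbb{Z}}).$$
The hypothesis forces this to vanish unless $i-1-\dim(x)=n-\dim(x)-1$, i.e.\ $i=n$, which is exactly the required vanishing.

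The principal technical point will be the topological join identity $|\mathcal{P}_x|\simeq \Sigma^{\dim(x)}|\operatorname{link}_x|$: one must know that geometric realization of simplicial posets is compatible with the simplicial join (so that Lemma~\ref{join} transfers to a homotopy equivalence of realizations) and must carefully handle the degenerate cases. In particular, when $\operatorname{link}_x=\emptyset$ (i.e.\ $x$ is maximal), the convention $\widetilde{\operatorname{H}}^{-1}(\emptyset)=\mathbb{Z}$ makes the hypothesis force $\dim(x)=n$, so the assumption implicitly imposes purity of $\mathcal{P}$ in dimension $n$; the case $\dim(x)=0$ collapses the join to $|\mathcal{P}_x|\simeq|\operatorname{link}_x|$, which is consistent. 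Once these topological facts are in hand, the argument is a direct translation of Alexander-type duality into the categorical framework developed in the preceding sections.
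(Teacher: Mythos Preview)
Your argument is correct and follows essentially the same route as the paper: contractibility of $\widetilde{\mathcal P}_x$ via Lemma~\ref{join}, the reduction of $\operatorname{H}^*(\mathcal P;P_x^{\mathcal P})$ to $\widetilde{\operatorname{H}}^{*-1}(\mathcal P_{<x}\ast\operatorname{link}_x;\underline{\mathbb Z})$ via Proposition~\ref{pair}, and the join-with-a-sphere computation to shift the degree by $\dim(x)$, followed by Theorem~\ref{dualite}. The only cosmetic difference is that the paper cites Munkres for the cohomology of a join with a sphere where you invoke the suspension isomorphism, and your treatment of the $FP$ hypothesis and the degenerate cases is slightly more explicit.
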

\begin{proof}
The previous lemma shows that $\widetilde{\mathcal{P}}_x$ is contractible for every object $x$ of $\mathcal{P}$. We have then $$\operatorname{H}^*(\mathcal{P},P_x^{\mathcal{P}})=\widetilde{\operatorname{H}}^{*-1}(\mathcal{P}_{<x} \ast \operatorname{link}_x;\underline{\mathbb{Z}}).$$ Now, for a simplicial poset, $\mathcal{P}_{<x}$ is (co)homologically a sphere of dimension $\dim x-1$ so $\widetilde{\operatorname{H}}^{k}(\mathcal{P}_{<x} \ast \operatorname{link}_x;\underline{\mathbb{Z}})\cong  \widetilde{ \operatorname{H}}^{k-\dim(x)}(\operatorname{link}_{x};\underline{\mathbb Z})$ (see \protect{\cite[theorem 62.5]{Munkres}}). If $\widetilde{\operatorname{H}} ^i(\operatorname{link}_{x};\underline{\mathbb{Z}})=0 \text{   for   } i\neq n-\dim(x)-1$ then the reduced cohomology of $\mathcal{P}_x$ is concentrated in dimension $n-1$. It follows that $\operatorname{H}^i(\mathcal{P},P_x^{\mathcal{P}})=0$ for $i \neq n$. The assumption that $\mathcal{P}$ is finite ensures that $\underline{\mathbb{Z}}$ is of type $FP$ therefore the theorem \ref{dualite} allows to conclude.
\end{proof}

\medskip
The following key lemma due to Munkres, allows us to write a topological version of theorem~\ref{th1}.

\begin{lemma}[ ]\label{key}
Let $\Delta$ be a simplicial complex and $x$ a simplex of $\Delta$. Then
$$\operatorname{H}^i(|\Delta|,|\Delta|-x;\mathbb{Z}) \cong \widetilde{\operatorname{H}}^{i-dim(x)-1}(\operatorname{link}_x;\mathbb{Z}).$$

\end{lemma}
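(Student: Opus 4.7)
The plan is to reduce to a purely local computation of relative cohomology at a single point and then identify the resulting local invariant with a suitable shift of the cohomology of the link. Fix an interior point $p$ of $x$, for instance the barycenter. The first step is to show that the inclusion of pairs
\[
(|\Delta|, |\Delta| - x) \hookrightarrow (|\Delta|, |\Delta| - \{p\})
\]
is a cohomology isomorphism. Indeed, $|\Delta| - \{p\}$ deformation retracts onto $|\Delta| - x$: radially push each point of $\mathrm{int}(x) \setminus \{p\}$ outward to $\partial|x|$, and extend across the open star of $x$ linearly in the join coordinates, keeping everything outside the closed star of $x$ fixed.

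Next, I would localize the computation around $p$. Passing to the barycentric subdivision $\Delta'$ (so that $p$ becomes a vertex), by excising the complement of the open star of $p$ in $\Delta'$ one obtains
\[
H^i(|\Delta|, |\Delta| - \{p\}) \cong H^i\bigl(|\overline{\mathrm{St}(p)}|,\ |\overline{\mathrm{St}(p)}| - \{p\}\bigr).
\]
The closed star $|\overline{\mathrm{St}(p)}|$ is the cone on $|\mathrm{link}_p^{\Delta'}|$, hence contractible, and $|\overline{\mathrm{St}(p)}| - \{p\}$ deformation retracts onto $|\mathrm{link}_p^{\Delta'}|$. The long exact sequence of this pair then yields
\[
H^i(|\Delta|, |\Delta| - \{p\}) \cong \widetilde{H}^{i-1}\bigl(|\mathrm{link}_p^{\Delta'}|\bigr).
\]

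The final step is the combinatorial identification of the link of $p$ in the barycentric subdivision. By the standard description of $\Delta'$, simplices at $p$ correspond to chains of faces of $\Delta$ passing through $x$, and this yields a canonical isomorphism of simplicial complexes
\[
\mathrm{link}_p^{\Delta'} \;\cong\; (\partial x)' \ast (\mathrm{link}_x)',
\]
where $(-)'$ denotes barycentric subdivision. Topologically, $|\mathrm{link}_p^{\Delta'}| \simeq S^{\dim x - 1} \ast |\mathrm{link}_x|$. Since joining with $S^{n-1}$ is $n$-fold suspension up to homotopy,
\[
\widetilde{H}^{i-1}\bigl(S^{\dim x - 1} \ast |\mathrm{link}_x|\bigr) \cong \widetilde{H}^{i - \dim x - 1}(|\mathrm{link}_x|),
\]
which gives the desired formula. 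The edge case $\mathrm{link}_x = \emptyset$ is handled by the conventions $S^{-1} = \emptyset$ and $\widetilde{H}^{-1}(\emptyset) = \mathbb{Z}$, consistent with $|\overline{\mathrm{St}(x)}| = |x|$ and $\partial|x| = S^{\dim x - 1}$.

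The main technical obstacle is the first step: making the radial deformation retract of $|\Delta| - \{p\}$ onto $|\Delta| - x$ rigorous in a way that is continuous across all simplices containing $x$ and compatible at the boundary of the closed star. The cleanest approach is to work in join coordinates on the closed star of $x$, where the retraction $((1-t)a + t b) \mapsto ((1-t)r_p(a) + tb)$ (with $r_p\colon \mathrm{int}(x)\setminus\{p\}\to \partial|x|$ the radial projection from $p$) is explicit, and glue with the identity outside. Verifying that this is indeed a strong deformation retract of pairs then reduces the lemma to the join calculation above.
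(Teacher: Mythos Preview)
The paper does not give its own proof of this lemma; it simply cites Munkres \cite[Lemma~3.3, p.~116]{Munkres84}. Your argument is essentially the standard proof one finds there: localize via excision to the star of the barycenter in the barycentric subdivision, identify the link of that barycenter as the join $(\partial x)' * (\operatorname{link}_x)'$, and apply the suspension isomorphism for joins with a sphere. So your approach is correct and matches the cited reference.

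One small remark on interpretation: in the paper (and in Munkres), the symbol $x$ in $|\Delta|-x$ is being used for a \emph{point} of $|\Delta|$ lying in the interior of the simplex $x$ (the barycenter, say), not for the removal of the whole simplex. With that reading your first step becomes vacuous and the argument starts directly at the excision step. Your deformation-retraction step is not wrong, but it is addressing an ambiguity in the statement rather than a mathematical necessity; you could simply note the intended meaning and drop it.
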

\begin{proof}
See \protect{\cite[lemma $3.3$ page $116$]{Munkres84}} .
\end{proof}

\begin{theorem}\label{th2}
Let $\Delta$ a finite simplicial complex of dimension $n$, then $P(\Delta)$ is a duality category if and only if the local cohomology of $|\Delta|$ at any point is concentrated in degree $n$. In this case:
 $$\operatorname{Ext}_{\modcat{P(\Delta)}}^i(\underline{\mathbb{Z}},-)\cong \operatorname{Tor}^{P(\Delta)}_{n-i}(D^n(\underline{\mathbb{Z}}),-)$$
where $D^n(\underline{\mathbb{Z}})$ is the functor defined on objects by $$D^n(\underline{\mathbb{Z}})(x)=\operatorname{H}^i(|\Delta|,|\Delta|-x;\mathbb{Z}).$$
\end{theorem}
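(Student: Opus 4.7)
The plan is to chain the identifications of Proposition \ref{pair}, Lemma \ref{join}, and Lemma \ref{key} to convert the duality criterion of Theorem \ref{dualite} into a statement about local cohomology.

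First, I would fix a simplex $x$ of $\mathcal{P}:=P(\Delta)$ and rewrite the group $\operatorname{Ext}^i_{P(\Delta)\mathrm{-mod}}(\underline{\mathbb{Z}},P_x^{\mathcal{P}}) = \operatorname{H}^i(\mathcal{P};P_x^{\mathcal{P}})$. Lemma \ref{join} gives $\widetilde{\mathcal{P}}_x \cong \mathcal{P}_{\leq x}\ast \operatorname{link}_x$; the first factor has $x$ as a terminal object, hence is contractible, so the simplicial join is contractible. Proposition \ref{pair} therefore yields $\operatorname{H}^i(\mathcal{P};P_x^{\mathcal{P}}) \cong \widetilde{\operatorname{H}}^{i-1}(\mathcal{P}_x;\underline{\mathbb{Z}})$. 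Applying Lemma \ref{join} again, $\mathcal{P}_x \cong \mathcal{P}_{<x}\ast\operatorname{link}_x$, where $\mathcal{P}_{<x}$ is the face poset of the boundary of $x$, homologically a sphere of dimension $\dim(x)-1$. The join formula used in the proof of Theorem \ref{th1} combined with Lemma \ref{key} then collapses the chain to the natural isomorphism
$$\operatorname{Ext}^i_{P(\Delta)\mathrm{-mod}}(\underline{\mathbb{Z}},P_x^{\mathcal{P}}) \;\cong\; \operatorname{H}^i(|\Delta|,|\Delta|-x;\mathbb{Z}),$$
natural in $x$.

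With this identification at hand, I would invoke Theorem \ref{dualite}. Since $\Delta$ is finite, $\underline{\mathbb{Z}}$ is of type $FP$, so the three conditions of that theorem are equivalent. Condition~(2) translates, under the above isomorphism, to the existence of an integer $m$ for which $\operatorname{H}^i(|\Delta|,|\Delta|-x;\mathbb{Z})=0$ for all $i\neq m$ and all simplices $x$, i.e.\ the local cohomology of $|\Delta|$ is concentrated in a single degree $m$. To pin $m$ down to $n=\dim\Delta$, I would evaluate at a top-dimensional simplex $x$: then $\operatorname{link}_x=\emptyset$, and Lemma \ref{key} gives $\operatorname{H}^n(|\Delta|,|\Delta|-x;\mathbb{Z}) \cong \widetilde{\operatorname{H}}^{-1}(\emptyset;\mathbb{Z}) = \mathbb{Z}\neq 0$, forcing $m=n$. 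The dualizing functor then satisfies $D^n(\underline{\mathbb{Z}})(x) = H^n(RD(\underline{\mathbb{Z}}))(x) = \operatorname{Ext}^n_{P(\Delta)\mathrm{-mod}}(\underline{\mathbb{Z}},P_x^{\mathcal{P}}) \cong \operatorname{H}^n(|\Delta|,|\Delta|-x;\mathbb{Z})$, and the displayed $\operatorname{Ext}/\operatorname{Tor}$ duality formula is exactly condition~(3) of Theorem \ref{dualite}.

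The main point requiring some care is naturality: each step of the chain must be natural in $x$ so that the dualizing functor, a priori a right $P(\Delta)$-module, is recovered pointwise as local cohomology together with the correct $P(\Delta)^{\mathrm{op}}$-module structure. All the ingredients — the comparison of Bar resolutions in Proposition \ref{pair}, the decomposition in Lemma \ref{join}, and the excision-type identification of Lemma \ref{key} — are built from natural maps of simplicial pairs and Bar complexes, so the naturality should be essentially formal, though I would want to unwind it once explicitly with respect to inclusions of simplices to be sure.
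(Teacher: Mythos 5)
Your proposal is correct and follows essentially the same route the paper intends: Theorem \ref{th2} is stated there without a separate proof, and the implicit argument is precisely your chain — contractibility of $\widetilde{\mathcal{P}}_x$ via Lemma \ref{join}, Proposition \ref{pair}, the join/suspension formula already used in the proof of Theorem \ref{th1}, Munkres' Lemma \ref{key} (together with Remark \ref{general} identifying the local cohomology groups), all fed into conditions (2) and (3) of Theorem \ref{dualite} with $\underline{\mathbb{Z}}$ of type $FP$ by finiteness. Your additional step pinning the concentration degree to $n=\dim\Delta$ by evaluating at a top-dimensional simplex, where $\widetilde{\operatorname{H}}^{-1}(\emptyset;\mathbb{Z})\cong\mathbb{Z}$, and your remark that naturality in $x$ is needed to recover the $P(\Delta)^{op}$-module structure of $D^n(\underline{\mathbb{Z}})$ (which the paper sidesteps by defining it only on objects) make explicit two points the paper leaves implicit.
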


\begin{remark}\label{general}
If $\mathcal{C}$ is a poset, then we have the isomorphisms 
\begin{equation}\operatorname{H}^*(\mathcal{C};P_x^{\mathcal{C}}) \cong \operatorname{H}^*(\mathcal{C},\mathcal{C}-\mathcal{C}_{\geq x};P_x^{\mathcal{C}}) \cong \operatorname{H}^*(\mathcal{C},\mathcal{C}-\mathcal{C}_{\geq x};\underline{\mathbb{Z}}).\end{equation}
The functor $P_x^{\mathcal{C}}$ is trivial on $\mathcal{C}-\mathcal{C}_{\geq x}$ so the third term in the long exact sequence associated to the pair $(\mathcal{C},\mathcal{C}-\mathcal{C}_{\geq x})$ is also trivial and this implies the first isomorphism. The second one follows from the long exact sequence of relative cohomology groups associated to the short exact sequence of $\mathcal{C}$-modules:
$$0\rightarrow \underline{\mathbb{Z}}^{\mathcal{C}-\mathcal{C}_{\geq x}} \rightarrow \underline{\mathbb{Z}}\rightarrow P_x^{\mathcal{C}} \rightarrow 0.$$ where $\underline{\mathbb{Z}}^{\mathcal{C}-\mathcal{C}_{\geq x}}$ is the $\mathcal{C}$-module, defined by $\underline{\mathbb{Z}}^{\mathcal{C}-\mathcal{C}_{\geq x}}(a)=\mathbb{Z}$ if $a\in \text{Ob}(\mathcal{C}-\mathcal{C}_{\geq x})$ and $0$ otherwise. This remark allows us to identify the local cohomology group $\operatorname{H}^i(|\Delta|,|\Delta|-x;\mathbb{Z})$ to $\operatorname{H}^i(\Delta,\Delta-\Delta_{\geq x};\mathbb{Z})$ for a simplicial complex $\Delta$.
\end{remark}

\medskip
\bull{Application: spherical buildings.}
\medskip 
\begin{definition}A spherical building is a simplicial complex which is the union of subcomplexes $\Sigma_i$ satisfying
 \begin{enumerate}
\item Each $\Sigma_i$ is a finite Coxeter complex.
\item For all simplices $A,B\in \Delta$, there is an apartment $\Sigma_i$ containing $A$ and $B$.
\item If $\Sigma_1$ and $\Sigma_2$ are two apartments containing $A$ and $B$, there exists an isomorphism $\Sigma_1\rightarrow \Sigma_2$ fixing $A$ and $B$.
\end{enumerate}
\end{definition}

\begin{theorem}[\protect{\cite[theorem 4.73 page 197]{Buildings}}]
If $\Delta$ is a spherical building of dimension $n$, then $|\Delta|$ has the homotopy type of a wedge of $n$-spheres.
\end{theorem}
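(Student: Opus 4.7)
The statement is the classical Solomon--Tits theorem. My plan is to prove that $|\Delta|$ is an $(n-1)$-connected CW-complex of dimension $n$; since the top homology of such a space is free abelian, the Hurewicz theorem furnishes a map $\bigvee_i S^n_i \to |\Delta|$ realizing a basis of $H_n(|\Delta|;\mathbb Z)$, and Whitehead's theorem (both spaces being CW) promotes this homology isomorphism to a homotopy equivalence.

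The dimension bound is immediate. The geometric input is to identify each apartment $\Sigma$ with $S^n$: as a finite Coxeter complex of rank $n+1$, $\Sigma$ is isomorphic to the triangulation of the unit sphere cut out by the reflecting hyperplanes of the Weyl group, so $|\Sigma|\cong S^n$. To establish $(n-1)$-connectivity of $|\Delta|$, I would fix an apartment $\Sigma_0$ and a chamber $C_0\in\Sigma_0$, then well-order the chambers of $\Delta$ as $C_0,C_1,\dots$ so that (i) the chambers of $\Sigma_0$ come first and (ii) each subsequent $C_k$ shares a panel (codimension-one face) with some $C_j$ for $j<k$. Property (ii) is available because any two chambers lie in a common apartment, and apartments are chamber-connected.

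Set $X_k=\overline{C_0}\cup\cdots\cup\overline{C_k}$ and argue by induction. The base case is $X_{|W|-1}=|\Sigma_0|\cong S^n$, which is $(n-1)$-connected. For the inductive step, write $X_k=X_{k-1}\cup_{D_k}\overline{C_k}$, where $D_k=\overline{C_k}\cap X_{k-1}$ is a subcomplex of $\partial\overline{C_k}\cong S^{n-1}$. The decisive subclaim is that $D_k$ is a \emph{proper, contractible} subcomplex of $\partial\overline{C_k}$. Granting this, the $n$-simplex $\overline{C_k}$ deformation retracts onto $D_k\subset X_{k-1}$, so $X_k\simeq X_{k-1}$ and $(n-1)$-connectivity is preserved. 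Taking the (possibly transfinite) colimit yields $(n-1)$-connectivity of $|\Delta|$.

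The main obstacle is the subclaim on $D_k$. This is where the building axioms enter in an essential way: using the canonical type-preserving retractions $\rho_{\Sigma,C}\colon\Delta\to\Sigma$ based at a chamber, or equivalently the combinatorial convexity of apartments, one shows that the union of panels of $C_k$ already belonging to $X_{k-1}$ forms a shellable sub-ball of $\partial\overline{C_k}$, hence collapses to a point. Working out this shelling carefully is the combinatorial heart of Solomon--Tits, and it is precisely the spherical hypothesis on $\Delta$ (finiteness of the Weyl group of each residue, giving spherical links) that makes the argument succeed; for buildings of affine or more general type the same inductive step would fail and the wedge-of-spheres conclusion would not hold.
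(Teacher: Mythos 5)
Your overall architecture is sound and is in fact the standard one (the paper offers no proof of this statement, quoting it from \cite[theorem 4.73]{Buildings}, whose argument is exactly of your type), but your ``decisive subclaim'' is false, and falsity here is structural, not a technicality: it contradicts the theorem you are proving. If $D_k=\overline{C_k}\cap X_{k-1}$ were a \emph{proper contractible} subcomplex of $\partial\overline{C_k}$ at every stage after $\Sigma_0$, then every attachment would be a homotopy equivalence and the colimit would give $|\Delta|\simeq|\Sigma_0|\cong S^n$, so $\widetilde{H}_n(|\Delta|;\mathbb{Z})\cong\mathbb{Z}$. For any thick building this is wrong: for $n=0$ the building of $\mathrm{SL}_2(\mathbb{F}_q)$ is $q+1$ points, a wedge of $q$ copies of $S^0$, and a thick generalized $m$-gon ($n=1$) is a wedge of many circles. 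The extra spheres must be created somewhere in the induction, namely at those stages where $D_k$ is the \emph{entire} boundary $\partial\overline{C_k}\cong S^{n-1}$; such stages are unavoidable, and they are harmless for your purposes, since attaching an $n$-cell along its full boundary also preserves $(n-1)$-connectivity. Separately, your ordering condition (ii) is too weak to control $D_k$ at all: sharing a panel with an earlier chamber only guarantees that $D_k$ \emph{contains} one closed panel. Since chambers can meet along arbitrary faces, $D_k$ need not be a union of closed panels; e.g.\ for $n=2$ it could be a closed edge together with the opposite vertex, a disconnected subcomplex, and attaching a $2$-simplex along it amounts to attaching a $1$-cell, which can destroy simple connectivity and wreck the induction.

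The repair is the actual combinatorial heart of Solomon--Tits. Fix $C_0$ and order the chambers by weakly increasing gallery distance from $C_0$. The gate property of residues (equivalently, convexity of apartments and the retractions $\rho_{\Sigma,C_0}$ you mention) then identifies the intersection exactly: $D_k$ is the union of the closed faces $B\leq C_k$ with $\mathrm{proj}_B(C_0)\neq C_k$, equivalently the union of the closed panels $P$ of $C_k$ whose projection chamber $\mathrm{proj}_P(C_0)$ differs from $C_k$. This yields the correct dichotomy: if $C_k$ is not opposite $C_0$, this is a nonempty \emph{proper} subset of the facets of the simplex $\overline{C_k}$, and any proper nonempty union of closed facets of a simplex is a cone, hence contractible; if $C_k$ is opposite $C_0$ (and it is precisely sphericality --- finite diameter and the existence of opposition --- that makes every panel of such a $C_k$ point inward), then $D_k=\partial\overline{C_k}$ and the attachment wedges on an $n$-sphere. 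With this dichotomy your Hurewicz--Whitehead finish goes through (note Hurewicz needs $n\geq 2$; the cases $n\leq 1$ should be handled by a direct remark), and as a bonus the induction counts the wedge summands: one $n$-sphere for each chamber opposite $C_0$, rather than the single sphere your subclaim would force.
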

\begin{proposition}[\protect{\cite[proposition 4.9 page 176]{Buildings}}]
If $\Delta$ is a spherical building, then $\operatorname{link}_A$ is a spherical building for any $A\in \Delta$.
\end{proposition}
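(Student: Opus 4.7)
The plan is to show that $\operatorname{link}_A$ inherits a building structure from $\Delta$ by taking, as the apartment system of $\operatorname{link}_A$, the collection of links of apartments of $\Delta$ that contain $A$. More precisely, for each apartment $\Sigma$ of $\Delta$ with $A\in\Sigma$, define
\[
 \operatorname{link}_A(\Sigma):=\{B\in\operatorname{link}_A : A\cup B\in\Sigma\},
\]
and declare these to be the apartments of $\operatorname{link}_A$. The bijection $B\mapsto A\cup B$ between simplices of $\operatorname{link}_A$ and simplices of $\Delta$ strictly containing $A$ will let us transport the three building axioms from $\Delta$ to $\operatorname{link}_A$.

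The first axiom requires that each $\operatorname{link}_A(\Sigma)$ be a finite Coxeter complex. This is the only nontrivial input, and I would invoke the standard structural fact about Coxeter complexes: if $\Sigma$ is the Coxeter complex of a finite Coxeter system $(W,S)$ and $A$ corresponds to the standard coset $wW_J$ for some $J\subseteq S$, then $\operatorname{link}_A(\Sigma)$ is canonically isomorphic to the Coxeter complex of the parabolic Coxeter system indexed by $S\setminus J$, which is again finite. Finiteness of $\operatorname{link}_A(\Sigma)$ is immediate since $\Sigma$ itself is finite.

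Axioms 2 and 3 are then essentially formal. Given two simplices $B,B'\in\operatorname{link}_A$, apply axiom 2 for $\Delta$ to find an apartment $\Sigma$ containing $A\cup B$ and $A\cup B'$; since $A\subseteq A\cup B$, such a $\Sigma$ contains $A$, and $\operatorname{link}_A(\Sigma)$ is then an apartment of $\operatorname{link}_A$ containing $B$ and $B'$. For axiom 3, if $\operatorname{link}_A(\Sigma_1)$ and $\operatorname{link}_A(\Sigma_2)$ both contain $B$ and $B'$, then $\Sigma_1,\Sigma_2$ both contain $A\cup B$ and $A\cup B'$, and axiom 3 for $\Delta$ supplies an isomorphism $\Sigma_1\to\Sigma_2$ fixing $A\cup B$ and $A\cup B'$ pointwise; this isomorphism preserves $A$, hence restricts to an isomorphism $\operatorname{link}_A(\Sigma_1)\to\operatorname{link}_A(\Sigma_2)$ fixing $B$ and $B'$.

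The principal obstacle is the Coxeter-complex step. Its truth rests on the parabolic subgroup theory of finite Coxeter groups: one must check that the residue of a face in the Coxeter complex of $(W,S)$ is again a Coxeter complex, which is usually established by identifying the stabilizer of the face as a parabolic subgroup and the residue as the corresponding coset poset. Everything else in the proof is essentially a bookkeeping exercise that exploits the canonical order-preserving bijection between simplices of $\operatorname{link}_A$ and simplices of $\Delta$ properly containing $A$.
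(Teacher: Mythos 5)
The paper gives no proof of this proposition at all---it is quoted verbatim from Abramenko and Brown \cite[proposition 4.9]{Buildings}---and your argument is exactly the standard proof found in that reference: take as apartments of $\operatorname{link}_A$ the complexes $\operatorname{link}_A(\Sigma)$ for apartments $\Sigma$ of $\Delta$ containing $A$, reduce axiom~1 to the fact that links of simplices in a finite Coxeter complex are finite Coxeter complexes of standard parabolic subsystems, and transport axioms~2 and~3 through the bijection $B\mapsto A\cup B$. The only slip is a harmless indexing one: under the coset convention in which $A$ is the standard coset $wW_J$, its link is the Coxeter complex of $(W_J,J)$, not of the subsystem indexed by $S\setminus J$ (the complement arises only under the ``type'' convention, where a simplex of type $J$ corresponds to the coset $wW_{S\setminus J}$); since either way the link is a finite Coxeter complex, the argument is unaffected.
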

For a spherical building of dimension $n$, $|\operatorname{link}_A|$ has the homotopy type of a wedge of spheres of dimension $n-\dim(A)-1$. We deduce from theorem~\ref{th1}:
\begin{corollary}\label{immeubledetits}
Let $\Delta$ a finite spherical building then the simplicial poset $P(\Delta)$ is a duality category.
\end{corollary}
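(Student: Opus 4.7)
The plan is to verify the hypothesis of Theorem~\ref{th1} for the simplicial poset $P(\Delta)$, using the two results recalled just before the statement, namely that every link in a spherical building is again a spherical building and that the realization of a spherical building of dimension $m$ has the homotopy type of a wedge of $m$-spheres.

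First, I would fix an object $x$ of $P(\Delta)$, that is a simplex of $\Delta$ of some dimension $d=\dim(x)$. Since $\Delta$ is finite of dimension $n$, the subcomplex $\operatorname{link}_x$ is itself finite. By the cited proposition \cite[proposition 4.9 page 176]{Buildings}, $\operatorname{link}_x$ is again a spherical building. A standard dimension count in simplicial complexes gives that a simplex $y\in\operatorname{link}_x$ is disjoint from $x$ with $x\cup y\in\Delta$, so $|x\cup y|=|x|+|y|\le n+1$, that is $\dim(y)\le n-d-1$, with equality realized whenever $x$ sits in a top-dimensional simplex of $\Delta$; hence $\operatorname{link}_x$ has dimension $n-d-1$.

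Next I would apply the wedge-of-spheres theorem to $\operatorname{link}_x$: its geometric realization has the homotopy type of a wedge of spheres of dimension $n-d-1$. Consequently the reduced integral cohomology of $\operatorname{link}_x$ is concentrated in degree $n-\dim(x)-1$, which is precisely the local vanishing condition required in the hypothesis of Theorem~\ref{th1} with the same integer $n$ for every $x$.

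Finally I would invoke Theorem~\ref{th1} directly: since $P(\Delta)$ is a finite simplicial poset (as $\Delta$ is finite) satisfying the local cohomological vanishing for this uniform $n$, it is a duality category. No step here is a real obstacle; the only point that requires a line of care is the dimension identity $\dim(\operatorname{link}_x)=n-\dim(x)-1$, which ensures that the degree in which the reduced cohomology of the link concentrates is exactly the one prescribed by the criterion.
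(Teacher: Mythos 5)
Your proposal is correct and follows essentially the same route as the paper: links in a spherical building are spherical buildings, the Solomon--Tits wedge-of-spheres theorem concentrates the reduced cohomology of $\operatorname{link}_x$ in degree $n-\dim(x)-1$, and Theorem~\ref{th1} (with finiteness guaranteeing the $FP$ hypothesis) concludes. The one point you flag yourself --- that $\dim(\operatorname{link}_x)=n-\dim(x)-1$ requires every simplex to lie in a top-dimensional one --- is indeed automatic here, since every simplex of a building lies in an apartment, and apartments are Coxeter complexes of dimension $n$ in which every simplex is contained in a chamber.
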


\bull{Poincar\'e Duality for a manifold.}
\medskip

In this example, we find the classical Poincar\'e duality for a cohomological, compact and triangulated manifold. We recall that a cohomological manifold of dimension $n$ is a topological space $X$ such that for every $x\in X$, the local cohomology groups $\operatorname{H}^i(X,X-x)$ are trivial for $i\neq n$ and  $\operatorname{H}^n(X,X-x)=\mathbb{Z}$.

\begin{proposition}\label{constant}
Let $\Delta$ a finite triangulation of a cohomological manifold of dimension $n$.
The poset $P(\Delta)$ is a duality category. The $P(\Delta)^{op}$-module $D^n(\underline{\mathbb{Z}})$ is equal to $\mathbb{Z}$ for all object $x$ of $P(\Delta)$ and takes morphisms in $P(\Delta)$ to isomorphisms.
\end{proposition}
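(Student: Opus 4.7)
The plan is to deduce the first two assertions directly from Theorem~\ref{th2}. By definition of a cohomological manifold of dimension $n$, the local cohomology groups $\operatorname{H}^i(|\Delta|, |\Delta|-p; \mathbb{Z})$ vanish for $i \neq n$ and equal $\mathbb{Z}$ for $i = n$ at every point $p \in |\Delta|$. Picking $p$ in the interior of an arbitrary simplex $x$ and invoking Lemma~\ref{key} to identify this with $\operatorname{H}^*(|\Delta|, |\Delta|-x; \mathbb{Z})$ (equivalently, using that the closed star of $x$ deformation retracts onto any interior point), the hypothesis of Theorem~\ref{th2} is verified. This yields at once that $P(\Delta)$ is a duality category of dimension $n$ and that $D^n(\underline{\mathbb{Z}})(x) = \operatorname{H}^n(|\Delta|, |\Delta|-x; \mathbb{Z}) \cong \mathbb{Z}$ for every simplex $x$.

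Next, for the functoriality assertion, I would fix a morphism $x \leq y$ in $P(\Delta)$. Using Remark~\ref{general}, the induced map $D^n(\underline{\mathbb{Z}})(y) \to D^n(\underline{\mathbb{Z}})(x)$ is identified with the restriction map $\operatorname{H}^n(|\Delta|, |\Delta|-\Delta_{\geq y}; \mathbb{Z}) \to \operatorname{H}^n(|\Delta|, |\Delta|-\Delta_{\geq x}; \mathbb{Z})$ arising from the inclusion of pairs (valid because $\Delta_{\geq y} \subseteq \Delta_{\geq x}$, every simplex containing $y$ containing $x$ as well). Geometrically, $|\Delta|-\Delta_{\geq z}$ is the complement of the open star of $z$, and the open star of $y$ is contained in that of $x$.

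The main step is to show that this restriction map is an isomorphism of $\mathbb{Z}$. The strategy is to apply the long exact sequence of the triple $(|\Delta|, |\Delta|-\Delta_{\geq y}, |\Delta|-\Delta_{\geq x})$, combined with excision of the open complement of the closed star of $x$; the obstruction to the map being an isomorphism then reduces to the vanishing, in degrees $n-1$ and $n$, of certain relative cohomology groups supported on a link-like subspace of $|\Delta|$. This vanishing is the main obstacle of the proof; it reflects the fact that the dualizing functor is the simplicial avatar of the orientation sheaf of the cohomological manifold, which is known to be locally constant. Any system of compatible local orientations along a face inclusion $x \leq y$ then provides the required isomorphism between the two copies of $\mathbb{Z}$, completing the proof.
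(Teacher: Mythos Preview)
Your first paragraph is correct and matches the paper: the duality-category claim and the pointwise value $D^n(\underline{\mathbb{Z}})(x)\cong\mathbb{Z}$ both follow immediately from Theorem~\ref{th2} together with the definition of a cohomological manifold.

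The functoriality argument, however, has a genuine gap. You correctly set up the long exact sequence of the triple and isolate the obstruction as the vanishing of a relative group, but you then dispose of that vanishing by saying it ``reflects the fact that the dualizing functor is the simplicial avatar of the orientation sheaf \ldots\ which is known to be locally constant,'' and you conclude by invoking ``compatible local orientations.'' That is precisely the statement you are asked to prove: local constancy of $D^n(\underline{\mathbb{Z}})$ \emph{is} the assertion that the maps are isomorphisms. Nothing in the paper independently identifies $D^n(\underline{\mathbb{Z}})$ with an orientation sheaf whose local constancy has already been established, so this step is circular (or at best an unproved appeal to outside machinery).

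The paper supplies the missing computation. For $\alpha:x\to y$ it uses the short exact sequence $0\to P_y^{\mathcal C}\to P_x^{\mathcal C}\to P_x^{\mathcal C}/P_y^{\mathcal C}\to 0$, so the map is an isomorphism once $\operatorname{H}^n(\mathcal C;P_x^{\mathcal C}/P_y^{\mathcal C})=0$. Writing $z=y\setminus x$, a join decomposition in the style of Lemma~\ref{join} identifies this group with $\widetilde{\operatorname{H}}^{\,n-\dim(x)-1}\bigl(\operatorname{link}_x-(\operatorname{link}_x)_{\ge z};\mathbb{Z}\bigr)$. The manifold hypothesis is now used \emph{twice}: Lemma~\ref{key} makes $|\operatorname{link}_x|$ a cohomology sphere, and the identity $\operatorname{link}(z,\operatorname{link}_x)=\operatorname{link}_{x\cup z}=\operatorname{link}_y$ shows that links inside $\operatorname{link}_x$ are again cohomology spheres, so Remark~\ref{general} and Lemma~\ref{key} applied inside $\operatorname{link}_x$ give that removing the open star of $z$ kills the reduced cohomology. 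This concrete reduction to links is the substantive step your outline does not carry out.
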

\begin{proof}
Since $\Delta$ is a cohomological manifold of dimension $n$, the local cohomology groups  $\operatorname{H}^i(|\Delta|,|\Delta|-x)$ are trivial for $i\neq n$ and equal to $\mathbb{Z}$ for $i=n$. Then ,by theorem \ref{th2},
the poset $P(\Delta)$ is a duality category and the $P(\Delta)^{op}$-module $D^n(\underline{\mathbb{Z}})$ is equal to $\mathbb{Z}$ on each object $x$ of $P(\Delta)$. Let $x,y$ two objects of $P(\Delta)$ such there is a morphism $\alpha:x\rightarrow y$ (this morphism correspond to the inclusion of $x$ in $y$ as simplices of $\Delta$). We show that $D^n(\underline{\mathbb{Z}})(\alpha)$ is an isomorphism. To reduce the amount of notation, we denote $\mathcal{C}$ the poset $P(\Delta)$. 
The functor $D^n(\underline{\mathbb{Z}})$ is equal to the abelian group $\mathbb{Z}$ on objects then, by the long exact sequence of cohomology groups associated to the short exact sequence $0\rightarrow P_y^{\mathcal{C}} \rightarrow P_x^{\mathcal{C}}\rightarrow \frac{P_x^{\mathcal{C}}}{P_y^{\mathcal{C}}} \rightarrow 0$ induced by $\alpha$, it's sufficient to show that $\operatorname{H}^n(\mathcal{C}; \frac{P_x^{\mathcal{C}}}{P_y^{\mathcal{C}}})$ is trivial. Let $z$ the object of $P(\Delta)$ that correspond to $y\backslash x$ in $\Delta$.\\
Claim 1: We have the isomorphism
\begin{equation}\label{cs}
\operatorname{H}^n(\mathcal{C}; \frac{P_x^{\mathcal{C}}}{P_y^{\mathcal{C}}})\cong \widetilde{\operatorname{H}}^{n-dim(x)-1}(\operatorname{link}_x-\operatorname{link}_{x_{\geq z}}; \mathbb{Z}).
\end{equation}
Claim 2: Denote $\operatorname{link}(t,\operatorname{link}_x)$ the $\operatorname{link}$ of a simplex $t$ in $\operatorname{link}_x$. Then $\operatorname{link}(t,\operatorname{link}_x)$ is equal to $\operatorname{link}_{x\cup t}$ in $\Delta$.\\
By lemma \ref{key}, the geometric realization $|\operatorname{link}_x|$ is a cohomology sphere hence the reduced cohomology of $\operatorname{link}_x-\operatorname{link}_{x_{\geq z}}$ is trivial since it's the reduced cohomology of $|\operatorname{link}_x|$ minus a point for which we refer to remark \ref{general} and lemma \ref{key}. We conclude that $\operatorname{H}^n(\mathcal{C}; \frac{P_x^{\mathcal{C}}}{P_y^{\mathcal{C}}})$ is equal to $0$.\\
Proof of the isomorphism (\ref{cs}) of claim 2: First we have that 
$$\operatorname{H}^n(\mathcal{C}; \frac{P_x^{\mathcal{C}}}{P_y^{\mathcal{C}}})\cong 
H^n(\widetilde{\mathcal{C}}_x-\widetilde{\mathcal{C}}_{x_{\geq z}};P_x^{\widetilde{\mathcal{C}}_x-\widetilde{\mathcal{C}}_{x_{\geq z}}} ).$$
This follows from the long exact sequence associated to the pair $(\mathcal{C},\widetilde{\mathcal{C}}_x-\widetilde{\mathcal{C}}_{x_{\geq z}})$ and the fact that
$$ \Hom_{}(\mathcal{B}_{i}^{\mathcal{C}}/\mathcal{B}_{i}^{\widetilde{\mathcal{C}}_x-\widetilde{\mathcal{C}}_{x_{\geq z}}},\frac{P_x^{\mathcal{C}}}{P_y^{\mathcal{C}}}) \cong \prod_{x_0\rightarrow \cdots \rightarrow x_i \notin \mathcal{N}_i\widetilde{\mathcal{C}}_x-\widetilde{\mathcal{C}}_{x_{\geq z}}}\frac{P_x^{\mathcal{C}}}{P_y^{\mathcal{C}}}(x_i)=0.$$
Secondly, similarly to the proof of lemma \ref{join}, we have
$$\widetilde{\mathcal{C}}_x-\widetilde{\mathcal{C}}_{x_{\geq z}} \cong \mathcal{C}_{\leq x}*(\operatorname{link}_x-\operatorname{link}_{x_{\geq z}}) \text{  and  }
\mathcal{C}_x-\mathcal{C}_{x_{\geq z}} \cong \mathcal{C}_{< x}*(\operatorname{link}_x-\operatorname{link}_{x_{\geq z}}).$$ 
Then by proposition \ref{pair} we have 
\begin{align*}
H^n(\widetilde{\mathcal{C}}_x-\widetilde{\mathcal{C}}_{x_{\geq z}};P_x^{\widetilde{\mathcal{C}}_x-\widetilde{\mathcal{C}}_{x_{\geq z}}})&\cong \widetilde{\operatorname{H}}^{n-1}(\mathcal{C}_x-\mathcal{C}_{x_{\geq z}}; \underline{\mathbb{Z}})\\
&\cong \widetilde{\operatorname{H}}^{n-1}(\mathcal{C}_{< x}*(\operatorname{link}_x-\operatorname{link}_{x_{\geq z}}); \underline{\mathbb{Z}})\\
&\cong \widetilde{\operatorname{H}}^{n-dim(x)-1}(\operatorname{link}_x-\operatorname{link}_{x_{\geq z}}; \underline{\mathbb{Z}}).
\end{align*}
Proof of the claim 2: First we show $\operatorname{link}(t,\operatorname{link}_x)\subset \operatorname{link}_{t\cup x}$. Let $v$ a simplex of $\operatorname{link}(t,\operatorname{link}_x)$. On the one hand, by definition of the $\operatorname{link}$ of simplicial complex, we have $v$ is disjoint from $t$ in $\operatorname{link}_x$ so $v$ is disjoint from $t\cup x$. On the other hand, $v$ is joinable to $t$ in $\operatorname{link}_x$ hence $v\cup t\in \operatorname{link}_x$ and therefore $v$ is joinable to $t\cup x$. It follows that $v\in \operatorname{link}_{t\cup x}$. Secondly, a simplex $v$ of $\operatorname{link}_{t\cup x}$ is disjoint from $t$ and joinable to $t$ in 
$\operatorname{link}_x$ then $v\in \operatorname{link}(t,\operatorname{link}_x)$.

\end{proof}

\begin{recalls}[\protect{\cite[theorem 3.26 page 236]{hatcher}}]
The manifold $|\Delta|$ is orientable if and only if $\operatorname{H}_n(|\Delta|,\mathbb{Z})\cong \mathbb Z$.
\end{recalls}

\begin{proposition}
The manifold $|\Delta|$ is orientable if and only if the dualizing functor $D^n(\underline{\mathbb{Z}})$ is the constant $P(\Delta)^{op}$-module $\underline{\mathbb{Z}}$.
\end{proposition}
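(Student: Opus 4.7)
The plan is to combine the duality isomorphism of theorem~\ref{dualite} with the classical orientation-sheaf interpretation of local cohomology, using proposition~\ref{constant} as the starting point. We may assume $|\Delta|$ is connected, arguing component by component otherwise. By proposition~\ref{constant}, $D^n(\underline{\mathbb{Z}})$ is a $P(\Delta)^{op}$-module with value $\mathbb{Z}$ at every object and sending each morphism to an automorphism of $\mathbb{Z}$, that is, to $\pm 1$; up to isomorphism it is therefore classified by a monodromy character, and it is isomorphic to the constant functor $\underline{\mathbb{Z}}$ if and only if that character is trivial.

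For the ``if'' direction, I would specialize the duality isomorphism to $F=G=\underline{\mathbb{Z}}$ at $i=0$, obtaining
$$\operatorname{Hom}_{\modcat{P(\Delta)}}(\underline{\mathbb{Z}},\underline{\mathbb{Z}})\cong \operatorname{Tor}^{P(\Delta)}_n(D^n(\underline{\mathbb{Z}}),\underline{\mathbb{Z}}).$$
In the connected case the left-hand side is $\mathbb{Z}$, and the right-hand side, computed via the bar resolution, is the top homology of $|\Delta|$ with coefficients in the local system $D^n(\underline{\mathbb{Z}})$. If one assumes $D^n(\underline{\mathbb{Z}})\cong \underline{\mathbb{Z}}$, the right-hand side becomes $H_n(|\Delta|;\mathbb{Z})$, which is therefore $\mathbb{Z}$, so $|\Delta|$ is orientable by the recalled criterion.

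For the converse, assume $|\Delta|$ is orientable and fix a fundamental class $[|\Delta|]\in H_n(|\Delta|;\mathbb{Z})$. Its images under the natural maps $H_n(|\Delta|;\mathbb{Z})\to H_n(|\Delta|,|\Delta|-x;\mathbb{Z})\cong\mathbb{Z}$ form a family of generators, one for each simplex $x$, compatible under the inclusions of pairs $(|\Delta|,|\Delta|-y)\hookrightarrow(|\Delta|,|\Delta|-x)$ induced by $x\subset y$. Via lemma~\ref{key} together with the chain of isomorphisms of propositions~\ref{p1} and~\ref{pair}, this transports to a coherent family of generators $\sigma_x\in D^n(\underline{\mathbb{Z}})(x)$ which, by coherence, assembles into a natural transformation $\underline{\mathbb{Z}}\to D^n(\underline{\mathbb{Z}})$; since each $\sigma_x$ is a generator, this natural transformation is a pointwise, hence a global, isomorphism.

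The principal technical obstacle will be the compatibility of functorial structures: one must verify that the contravariant action on $D^n(\underline{\mathbb{Z}})$ coming from the definition---induced by the precomposition maps $P_y^{P(\Delta)}\to P_x^{P(\Delta)}$ on standard projectives---agrees, under the identification $D^n(\underline{\mathbb{Z}})(x)\cong H^n(|\Delta|,|\Delta|-x;\mathbb{Z})$ supplied by propositions~\ref{p1}, \ref{pair} and lemma~\ref{key}, with the natural restriction maps on local cohomology $H^n(|\Delta|,|\Delta|-y)\to H^n(|\Delta|,|\Delta|-x)$ for $x\subset y$. Once this naturality is in place, the equivalence between orientability of the manifold $|\Delta|$ and triviality of the $P(\Delta)^{op}$-module $D^n(\underline{\mathbb{Z}})$ reduces to the standard fact that a manifold is orientable precisely when its orientation sheaf admits a global nowhere-vanishing section.
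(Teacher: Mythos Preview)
Your ``if'' direction (triviality of $D^n(\underline{\mathbb Z})$ implies orientability) coincides with the paper's: both evaluate the duality isomorphism of theorem~\ref{dualite} at $i=0$ and the constant coefficient to deduce $H_n(|\Delta|;\mathbb Z)\cong\mathbb Z$.

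For the ``only if'' direction the two arguments diverge. You go through classical orientation theory: restrict a fundamental class to the local groups $H^n(|\Delta|,|\Delta|-x;\mathbb Z)$ and transport the resulting coherent system of generators through the chain of isomorphisms of propositions~\ref{p1}, \ref{pair} and lemma~\ref{key} to build an explicit natural isomorphism $\underline{\mathbb Z}\to D^n(\underline{\mathbb Z})$. This is correct, but as you note, it hinges on checking that the contravariant functoriality of $x\mapsto\operatorname{Ext}^n(\underline{\mathbb Z},P_x)$ matches the restriction maps on local cohomology --- a verification the paper never carries out. The paper instead invokes proposition~\ref{opdualite}: since $D^n(\underline{\mathbb Z})$ is itself a duality functor for $P(\Delta)^{op}$ with dualizing module $\underline{\mathbb Z}$, one gets directly
\[
\Hom_{P(\Delta)^{op}\text{-mod}}(D^n(\underline{\mathbb Z}),\underline{\mathbb Z})\cong \operatorname{Tor}_n^{P(\Delta)^{op}}(\underline{\mathbb Z},\underline{\mathbb Z})\cong H_n(|\Delta|;\mathbb Z)\cong\mathbb Z,
\]
and then argues, using only proposition~\ref{constant}, that a generator of this $\Hom$ group must have components $\pm\mathrm{Id}$ and hence be an isomorphism. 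The paper's route is more self-contained --- it never leaves the categorical framework and uses only the single topological input $H_n(|\Delta|)\cong\mathbb Z$ --- whereas your route is conceptually transparent (it literally identifies $D^n(\underline{\mathbb Z})$ with the orientation sheaf) at the cost of the naturality check you flagged.
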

\begin{proof}
Assume that $|\Delta|$ is orientable. The proposition~\ref{opdualite} gives 
$$\operatorname{Ext}_{\modcat{P(\Delta)^{op}}}^0(D(\underline{\mathbb Z}),\underline{\mathbb{Z}})\cong \operatorname{Tor}^{P(\Delta)^{op}}_{n}(\underline{\mathbb{Z}},\underline{\mathbb{Z}}).$$ The orientability of $|\Delta|$ is equivalent to $\operatorname{Tor}^{P(\Delta)^{op}}_{n}(\underline{\mathbb{Z}},\underline{\mathbb{Z}})\cong \mathbb Z$ hence $\Hom_{P(\Delta)^{op}\mathrm{-mod}}(D^n(\underline{\mathbb Z}), \underline{\mathbb Z})\cong \mathbb Z$. The previous proposition shows that $D^n(\underline{\mathbb Z})$ takes morphisms to isomorphisms and equal to $\mathbb Z$ on objects. If we fix a generator for $D^n(\underline{\mathbb Z})(x)=\mathbb Z$ for each object $x$ then the isomorphism $D^n(\underline{\mathbb Z})(y)=\mathbb{Z}\rightarrow D^n(\underline{\mathbb Z})(x)=\mathbb{Z}$ (whenever it exists a morphism between $x$ and $y$) is either $Id$ or $-Id$. The isomorphism $$\Hom_{P(\Delta)^{op}\mathrm{-mod}}(D^n(\underline{\mathbb Z}), \underline{\mathbb Z})\cong \mathbb Z$$ shows that we can choose a natural transformation $N:D^n(\underline{\mathbb Z})\rightarrow \underline{\mathbb Z}$ such that $N_x: D^n(\underline{\mathbb Z})(x)\rightarrow \underline{\mathbb Z}(x)$ is either $Id$ or $-Id$. This is not possible if $\Hom_{P(\Delta)^{op}\mathrm{-mod}}(D^n(\underline{\mathbb Z}), \underline{\mathbb Z})=0$ in the case where the manifold is not orientable. The natural transformation $N$ is an isomorphism and generates $\Hom_{P(\Delta)^{op}\mathrm{-mod}}(D^n(\underline{\mathbb Z}), \underline{\mathbb Z})$. Conversely if $D^n(\underline{\mathbb Z})\cong \underline{\mathbb Z}$ then the duality isomorphism $\operatorname{Ext}_{\modcat{P(\Delta)}}^i(\underline{\mathbb{Z}},-)\cong \operatorname{Tor}^{P(\Delta)}_{n-i}(D^n(\mathbb{Z}),-)$
implies $\operatorname{Tor}^{P(\Delta)}_{n}(\underline{\mathbb Z},\underline{\mathbb Z})\cong \mathbb Z$.
\end{proof}
\begin{proposition}
Let $X$ a cohomological, compact and triangulated manifold of dimension $n$. If $X$ is orientable then 
$$H_*(X,\mathbb{Z}) \cong H^{n-*}(X,\mathbb{Z}).$$
\end{proposition}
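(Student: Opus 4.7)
The plan is to assemble the statement directly from the machinery already set up for the poset $\mathcal{C}=P(\Delta)$, where $\Delta$ is a finite triangulation of $X$. Compactness of $X$ guarantees that the triangulation may be taken finite, so that $\underline{\mathbb{Z}}$ is of type $FP$ in $\mathcal{C}\mathrm{-mod}$ and Theorem~\ref{dualite} applies, while the hypothesis that $X$ is a cohomological manifold of dimension $n$ is exactly what powers Theorem~\ref{th2} and Proposition~\ref{constant} to make $P(\Delta)$ a duality category of projective dimension $n$.

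First, I would apply the duality isomorphism of Theorem~\ref{dualite} (condition~(3)) to the constant $\mathcal{C}$-module $\underline{\mathbb{Z}}$, evaluated at $\underline{\mathbb{Z}}$ itself:
$$\operatorname{Ext}_{\mathcal{C}\mathrm{-mod}}^{i}\bigl(\underline{\mathbb{Z}},\underline{\mathbb{Z}}\bigr)\ \cong\ \operatorname{Tor}^{\mathcal{C}\mathrm{-mod}}_{n-i}\bigl(D^{n}(\underline{\mathbb{Z}}),\underline{\mathbb{Z}}\bigr).$$
Next I would invoke the hypothesis that $X$ is orientable: by the proposition immediately preceding the statement, this is equivalent to $D^{n}(\underline{\mathbb{Z}})\cong\underline{\mathbb{Z}}$ as $\mathcal{C}^{op}$-modules. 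Substituting on the right-hand side gives
$$\operatorname{Ext}_{\mathcal{C}\mathrm{-mod}}^{i}\bigl(\underline{\mathbb{Z}},\underline{\mathbb{Z}}\bigr)\ \cong\ \operatorname{Tor}^{\mathcal{C}\mathrm{-mod}}_{n-i}\bigl(\underline{\mathbb{Z}},\underline{\mathbb{Z}}\bigr).$$

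Finally I would translate both sides into the topology of $X$ using the recalls on the cohomology of a small category. By definition $\operatorname{H}^{i}(\mathcal{C};\underline{\mathbb{Z}})=\operatorname{Ext}_{\mathcal{C}\mathrm{-mod}}^{i}(\underline{\mathbb{Z}},\underline{\mathbb{Z}})$ and $\operatorname{H}_{n-i}(\mathcal{C};\underline{\mathbb{Z}})=\operatorname{Tor}^{\mathcal{C}\mathrm{-mod}}_{n-i}(\underline{\mathbb{Z}},\underline{\mathbb{Z}})$, while the identifications $\operatorname{H}^{*}(\mathcal{C};\underline{\mathbb{Z}})\cong H^{*}(|\mathcal{C}|,\mathbb{Z})$ and $\operatorname{H}_{*}(\mathcal{C};\underline{\mathbb{Z}})\cong H_{*}(|\mathcal{C}|,\mathbb{Z})$ from the recalls, combined with the fact that $|P(\Delta)|$ is (the realization of) the barycentric subdivision of $\Delta$ and hence canonically homeomorphic to $|\Delta|=X$, yield
$$H^{i}(X,\mathbb{Z})\ \cong\ H_{n-i}(X,\mathbb{Z}),$$
which is the desired Poincar\'e duality after reindexing $i\mapsto n-i$.

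The only genuinely non-formal input is the equivalence between orientability of $X$ and $D^{n}(\underline{\mathbb{Z}})\cong\underline{\mathbb{Z}}$, which is handled by the preceding proposition; the rest is an unwinding of definitions. Thus I do not expect a real obstacle: the statement is essentially a corollary of Proposition~\ref{constant} together with the orientability criterion and the homology comparison between a poset and its geometric realization.
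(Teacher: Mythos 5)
Your proposal is correct and takes essentially the same approach as the paper: orientability gives $D^n(\underline{\mathbb{Z}})\cong\underline{\mathbb{Z}}$ by the preceding proposition, substituting into the duality isomorphism yields $\operatorname{Ext}^i_{\mathcal{C}\mathrm{-mod}}(\underline{\mathbb{Z}},\underline{\mathbb{Z}})\cong\operatorname{Tor}^{\mathcal{C}\mathrm{-mod}}_{n-i}(\underline{\mathbb{Z}},\underline{\mathbb{Z}})$, and these groups are identified with $H^i(X,\mathbb{Z})$ and $H_{n-i}(X,\mathbb{Z})$. The only difference is that you spell out the identification of the poset's (co)homology with that of $|\Delta|=X$, which the paper compresses into the phrase ``we thus find the classical Poincar\'e duality.''
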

\begin{proof}
 The manifold $|\Delta|$ is orientable so the dualizing functor $D^n(\mathbb{Z})$ is the constant $P(\Delta)^{op}$- module $\underline{\mathbb{Z}}$. We obtain 
$$\operatorname{Ext}_{\modcat{P(\Delta)}}^i(\underline{\mathbb{Z}},\underline{\mathbb{Z}})\cong \operatorname{Tor}^{P(\Delta)}_{n-i}(\underline{\mathbb{Z}},\underline{\mathbb{Z}}).$$
 We thus find the classical Poincar\'e duality.
\end{proof}

\bull{Fundamental group of an aspherical manifold.}
\medskip
\begin{definition}
A manifold $X$ is said aspherical if the homotopy groups $\pi_k(X)$ are trivial for $k>1$. Equivalently, $X$ is an  Eilenberg-MacLane space of type $K(\pi_1(X);1)$.
\end{definition}

The aim of this section is to prove in our setting that the first fundamental group of an aspherical compact and triangulated manifold is a Poincar\'e duality group.

\subsubsection{Preliminaries}
 Let $\varphi:\mathcal{C} \rightarrow \mathcal{D}$ a functor between two small categories. The functor $\varphi$ induces a functor  $\varphi^{op}:\mathcal{C}^{op} \rightarrow \mathcal{D}^{op}$. Let $\text{Res}_{\varphi}:\mathcal{D}\mathrm{-mod} \longrightarrow \mathcal{C}\mathrm{-mod}$ (resp. $\text{Res}_{\varphi^{op}}:\mathcal{D}^{op}\mathrm{-mod} \longrightarrow  \mathcal{C}^{op}\mathrm{-mod}$) the functor precomposition by $\varphi$ (resp. $\varphi^{op}$). 
\begin{lemma}
The functors $\text{Res}_{\varphi}$ and $\text{Res}_{\varphi^{op}}$ have a left adjoints. We denote $\text{LK}_\varphi$ (resp. $\text{LK}_{\varphi^{op}}$) the left adjoint of $\text{Res}_\varphi$ (resp. $\text{Res}_{\varphi^{op}}$).
 \end{lemma}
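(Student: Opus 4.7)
The plan is to construct $\text{LK}_\varphi$ as the left Kan extension along $\varphi$, using the fact (from Corollary~\ref{CT} applied with $T = \underline{\underline{\mathcal{C}}}$) that the standard projectives $P_a^{\mathcal{C}}$ form a system of projective generators of $\mathcal{C}\mathrm{-mod}$. The construction for $\varphi^{op}$ is formally identical. First I would define $\text{LK}_\varphi$ on standard projectives by the forced assignment $\text{LK}_\varphi(P_a^{\mathcal{C}}) := P_{\varphi(a)}^{\mathcal{D}}$, and on a morphism $P_a^{\mathcal{C}} \to P_b^{\mathcal{C}}$, which by Yoneda corresponds to an element of $P_a^{\mathcal{C}}(b) = \mathbb{Z}[\Hom_{\mathcal{C}}(a,b)]$, by post-composing with $\varphi$ to obtain an element of $\mathbb{Z}[\Hom_{\mathcal{D}}(\varphi(a),\varphi(b))] = P_{\varphi(a)}^{\mathcal{D}}(\varphi(b))$, again identified by Yoneda with a morphism $P_{\varphi(a)}^{\mathcal{D}} \to P_{\varphi(b)}^{\mathcal{D}}$. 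The adjunction isomorphism on generators is then immediate from Yoneda:
$$\Hom_{\mathcal{D}\mathrm{-mod}}(P_{\varphi(a)}^{\mathcal{D}}, G) \cong G(\varphi(a)) = (\mathrm{Res}_\varphi G)(a) \cong \Hom_{\mathcal{C}\mathrm{-mod}}(P_a^{\mathcal{C}}, \mathrm{Res}_\varphi G),$$
and this isomorphism is manifestly natural in $a$ and $G$.

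Next I would extend $\text{LK}_\varphi$ to arbitrary $F \in \mathcal{C}\mathrm{-mod}$ by using that the standard projectives generate: choose a presentation $\bigoplus_{j} P_{a_j}^{\mathcal{C}} \xrightarrow{u} \bigoplus_{i} P_{a_i}^{\mathcal{C}} \to F \to 0$, and define $\text{LK}_\varphi F$ as the cokernel of the induced morphism $\bigoplus_j P_{\varphi(a_j)}^{\mathcal{D}} \to \bigoplus_i P_{\varphi(a_i)}^{\mathcal{D}}$ obtained by applying $\text{LK}_\varphi$ to $u$ term by term. Functoriality of $\text{LK}_\varphi$, and independence of the construction from the choice of presentation, both follow once one verifies the adjunction on all objects (the left adjoint, if it exists, is unique up to canonical isomorphism).

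The adjunction $\Hom_{\mathcal{D}\mathrm{-mod}}(\text{LK}_\varphi F, G) \cong \Hom_{\mathcal{C}\mathrm{-mod}}(F, \mathrm{Res}_\varphi G)$ is then established by applying $\Hom(-, G)$ to the presentation of $\text{LK}_\varphi F$ and $\Hom(-, \mathrm{Res}_\varphi G)$ to the presentation of $F$; both $\Hom$-bifunctors convert the coproducts and cokernels into products and kernels (see Properties~\ref{propreties}), so the isomorphism on generators propagates to a diagram chase yielding the isomorphism for $F$, naturally in $F$ and $G$. This naturality and independence of the chosen presentation is the only step requiring a little care, but it is formal once adjointness on the generators is known.

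Alternatively, one may simply invoke the dual of the special adjoint functor theorem (as already used in the proof of Proposition~\ref{epireflexive}): $\mathrm{Res}_\varphi$ manifestly preserves all limits, since limits in $\mathcal{C}\mathrm{-mod}$ and $\mathcal{D}\mathrm{-mod}$ are computed pointwise and $\mathrm{Res}_\varphi$ is just precomposition; the category $\mathcal{D}\mathrm{-mod}$ is cocomplete, admits the set of standard projectives $\{P_d^{\mathcal{D}}\}_{d\in\mathrm{Ob}\,\mathcal{D}}$ as a generating family (by essential smallness of $\mathcal{D}$), and is well-powered hence co-well-powered. A left adjoint $\text{LK}_\varphi$ therefore exists. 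Applying the same argument to $\varphi^{op}:\mathcal{C}^{op}\to\mathcal{D}^{op}$ yields $\text{LK}_{\varphi^{op}}$.
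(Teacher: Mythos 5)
Your main construction is correct, and it is a near relative of the paper's, but the extension step is handled differently: the paper invokes its density lemma (lemma \ref{dense}) --- every object of $\cmod$ is \emph{canonically} a colimit of the objects $P_a^{\mathcal{C}}\oplus P_a^{\mathcal{C}}$ of $\mathcal{G}$ --- sets $\text{LK}_\varphi(P_a^{\mathcal{C}}\oplus P_a^{\mathcal{C}})=P_{\varphi(a)}^{\mathcal{D}}\oplus P_{\varphi(a)}^{\mathcal{D}}$, and defines $\text{LK}_\varphi F$ as the colimit of the images, the canonicity of the colimit making this well defined without any choices; the adjunction is then verified by the same lim/colim computation you sketch (and which the paper had already used to prove proposition \ref{epireflexive}). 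You instead pick a presentation of $F$ by direct sums of standard projectives and take a cokernel, delegating independence of the presentation and functoriality in $F$ to the uniqueness of representing objects. Both routes are legitimate; the paper's reuses machinery it has already built and avoids the well-definedness discussion entirely, while yours is self-contained and is the standard ``right-exact extension from projective generators.'' One bookkeeping slip: by the Yoneda isomorphism (\ref{yonneda}), a morphism $P_a^{\mathcal{C}}\to P_b^{\mathcal{C}}$ corresponds to an element of $P_b^{\mathcal{C}}(a)=\mathbb{Z}[\Hom_{\mathcal{C}}(b,a)]$, not of $P_a^{\mathcal{C}}(b)$, since $a\mapsto P_a^{\mathcal{C}}$ is contravariant. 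After this correction, applying $\varphi$ termwise still gives the map $\Hom(P_a^{\mathcal{C}},P_b^{\mathcal{C}})\to\Hom(P_{\varphi(a)}^{\mathcal{D}},P_{\varphi(b)}^{\mathcal{D}})$ you want, covariantly and compatibly with composition.

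Your alternative argument, however, is wrong as stated. The hypotheses you list --- $\mathcal{D}\mathrm{-mod}$ cocomplete, co-well-powered, with a generating family, together with preservation of \emph{limits} by $\text{Res}_\varphi$ --- are the hypotheses of the \emph{dual} special adjoint functor theorem, which produces a \emph{right} adjoint for a \emph{colimit}-preserving functor; they do not yield a left adjoint. (Since $\text{Res}_\varphi$ does preserve colimits, the dual theorem correctly applied would give the right Kan extension, a right adjoint of $\text{Res}_\varphi$, which is not $\text{LK}_\varphi$.) To obtain the left adjoint by this method you must use the special adjoint functor theorem proper: $\mathcal{D}\mathrm{-mod}$ is complete and well-powered, and it has a cogenerating set because it has a generating set and enough injectives (properties \ref{propreties}), hence an injective cogenerator; together with limit-preservation of $\text{Res}_\varphi$ this gives $\text{LK}_\varphi$. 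As your presentation argument already proves the lemma on its own, the proof survives, but the adjoint-functor-theorem paragraph should be corrected or deleted.
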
 
\begin{proof}
We make use of lemma \ref{dense} to construct $\text{LK}_\varphi$ as follows. Define $\text{LK}_\varphi$ on $\mathcal{G}$ by $\text{LK}_\varphi(P_a^{\mathcal{C}}\oplus P_a^{\mathcal{C}})=P_{\varphi(a)}^{\mathcal{D}}\oplus P_{\varphi(a)}^{\mathcal{D}}$ then for $F=\mathrm{colim}\  F_i$ with $F_i \in \text{Ob}\mathcal{G}$, define $\text{LK}_\varphi(F):=\mathrm{colim}\ \text{LK}_\varphi(F_i)$. The functor $\text{LK}_\varphi$ is a left adjoint to $\text{Res}_{\varphi}$.
Indeed for any object $G$ of $\mathcal{C}_T\mathrm{-mod}$ we have a natural isomorphism
 $$\Hom_{\modcat{\mathcal{D}}}(\text{LK}_\varphi(F),G)\cong \Hom_{\cmod}(F, \text{Res}_{\varphi}(G))$$ obtained as follows:
$$\begin{array}{cclcc}
\Hom_{\modcat{\mathcal{C}_T}}(\text{LK}_\varphi(\mathrm{colim}\ F_{i}), G) &\cong &\mathrm{lim}\  \Hom_{\modcat{\mathcal{C}_T}}(\text{LK}_\varphi(F_{i}),
G)&&\\
&\cong &\mathrm{lim}\ \Hom_{\cmod}(F_i, \text{Res}_{\varphi}(G)) &&\\
&\cong &\Hom_{\cmod}(\mathrm{colim}\ F_i, \text{Res}_{\varphi}(G))&&\\
&\cong
&\Hom_{\cmod}(F, \text{Res}_{\varphi}(G))
 .\end{array}$$
The construction of $\text{LK}_{\varphi^{op}}$ is similar.
\end{proof}
\begin{remark}
The notation $\text{LK}$ is an abbreviation of left Kan extension (see \protect{\cite[X.3]{categories}}).
\end{remark}
\begin{properties}\label{LK}\
\begin{enumerate}
\item $\text{LK}_\varphi(P_a^{\mathcal{C}})=P_{\varphi(a)}^{\mathcal{D}}$.
\item $F\otimes_{\mathcal{D}}\text{LK}_{\varphi}(G) \cong \text{Res}_{\varphi^{op}}{F}\otimes_{\mathcal{C}}G$ and $\text{LK}_{\varphi^{op}}(F)\otimes_{\mathcal{D}} G \cong F\otimes_{\mathcal{C}}\text{Res}_\varphi{G}$.

\item The functor $\text{LK}_{\varphi}$ is right exact. We denote $\mathbb{L}(\text{LK}_{\varphi})$ the left derived functor of $\text{LK}_{\varphi}$. To calculate  $\mathbb{L}(\text{LK}_{\varphi})$ of a functor $F$, we choose a projective resolution $\mathcal{P}_*$ of $F$ and we apply the functor $\text{LK}_{\varphi}$. We denote $\mathbb{L}_{i}\text{LK}_{\varphi}(F)$ the $n$'th homology group of $\mathbb{L}(\text{LK}_{\varphi})F=\mathbb{L}(\text{LK}_{\varphi})\mathcal{P}_*$. We have an isomorphism
 $$\mathbb{L}_{i}\text{LK}_{\varphi}(F)(d) \cong \operatorname{Tor}^{\cmod}_{i}(\text{Res}_{\varphi^{op}}P_d^{\mathcal{D}},F).$$

\item Let $\varphi|d$ denotes the fiber category over $d\in \text{Ob}\mathcal{D}$ whose objects are morphisms in $\mathcal{D}$ of the form $\varphi(a)\rightarrow d$. A morphism from $\varphi(a)\stackrel{\alpha}{\rightarrow} d$ to $\varphi(b)\stackrel{\beta}{\rightarrow} d$ is a morphism $h:a\rightarrow b$ in $\mathcal{C}$ such that $\beta \circ \varphi(h)=\alpha$. The fiber category $d|\varphi$ is the category $(\varphi^{op}|d)^{op}$. Let $j:\varphi|d \rightarrow \mathcal{C}$ the functor that assigns to a morphism $\varphi(a)\rightarrow d$ the object $a$ of $\mathcal{C}$ and is the identity on morphisms of $\varphi|d$. We keep the same notation for the analogue functors on $d|\varphi$ and $\varphi^{op}|d$. We have an isomorphism
$$\mathbb{L}_{i}\text{LK}_{\varphi}(F)(d)\cong \operatorname{Tor}^{\modcat{\varphi|d}}_{i}(\underline{\mathbb Z },\text{Res}_{j}F).$$
\end{enumerate}

\end{properties}
\begin{proof}\
\begin{enumerate}
\item This is a consequence of the isomorphisms
$$\Hom_{\modcat{\mathcal{D}}}(\text{LK}_{\varphi}P_a^{\mathcal{C}},G)\cong \Hom_{\modcat{\mathcal{C}}}(P_a^{\mathcal{C}},\text{Res}_{\varphi}G)\cong \Hom_{\modcat{\mathcal{D}}}(P_{\varphi(a)}^{\mathcal{D}},G).$$
\item Since the tensor products and $\text{LK}_{\varphi}$ commute with colimits, it's sufficient to prove the first statement for $G=P_a^{\mathcal{C}}$, which is a direct consequence of the first property and the isomorphisms (\ref{yonnedat}). The proof for the second statement is analogue.
\item The second property implies the isomorphisms
$$\mathbb{L}(\text{LK}_{\varphi}\mathcal{P}_*)(d)\cong P^d_{\mathcal{D}}\otimes_{\mathcal{D}}\text{LK}_{\varphi}\mathcal{P}_* \cong \text{Res}_{\varphi^{op}}{P^d_{\mathcal{D}}}\otimes_{\mathcal{C}}\mathcal{P}_*.$$
The statement follows by taking homology.
\item we have an isomorphism
$$\text{Res}_jP_x^{\mathcal{C}}\cong \bigoplus_{\varphi(x)\rightarrow d}{P_{\varphi(x)\rightarrow d}^{\varphi|d}}$$
given by the map associating to a morphism $$h\in \text{Res}_jP_x^{\mathcal{C}}(\varphi(a)\stackrel{f}{\rightarrow} d)=\mathbb{Z}[\Hom_{\mathcal{C}}(x,a)]$$ the same morphism $h$ as an element of $$P_{\varphi(x)\stackrel{f\circ \varphi(h)}{\rightarrow} d}^{\varphi|d}(\varphi(a)\stackrel{f}{\rightarrow} d).$$
This isomorphism and the isomorphisms (\ref{yonnedat}) imply
$$\text{LK}_{\varphi}(P_x^{\mathcal{C}})(d)\cong \underline{\mathbb{Z}}\otimes_{\varphi|d}\text{Res}_jP_x^{\mathcal{C}}.$$
Since $\text{LK}_{\varphi}$ and $\text{Res}_j$ commutes with colimits ($\text{Res}_j$ is an exact functor), we obtain $$\text{LK}_{\varphi}(F)(d)\cong \underline{\mathbb{Z}}\otimes_{\varphi|d}\text{Res}_jF$$
for any functor $F$ of $\cmod$. The functor $\text{Res}_j$ takes a standard projective to a direct sum of projectives and commutes with colimits then $\text{Res}_j$ preserves projectives. The statement follows by applying the last isomorphism to a projective resolution of $F$ and applying homology.  
\end{enumerate}
\end{proof}
We have $\text{LK}_\varphi(P_a^{\mathcal{C}})=P_{\varphi(a)}^{\mathcal{D}}$. It follows that $$\text{LK}_\varphi\mathcal{B}_{n}^{\mathcal{C}}\cong  \bigoplus_{x_0\rightarrow \cdots \rightarrow x_n \in \mathcal{N}_n\mathcal{C}}P^{\mathcal{D}}_{\varphi(x_n)}.$$
Let $d\in \text{Ob}(\mathcal{D})$. To give a chain $x_0\rightarrow \cdots \rightarrow x_n$ in $\mathcal{N}_n(\varphi |d)$ is equivalent to give a chain in $\mathcal{C}$ and a morphism in $\Hom_{\mathcal{D}}(\varphi(x_n),d)$. This correspondence induces an isomorphism 
$$\text{LK}_\varphi\mathcal{B}_{*}^{\mathcal{C}}:=\mathbb Z[\mathcal{N}_*(\varphi|-)].$$
Assume that the categories $\varphi |d$ are contractible for every $d\in \text{Ob}(\mathcal{D})$. Then the complex $\text{LK}_\varphi\mathcal{B}_{*}^{\mathcal{C}}$ is a projective resolution of the constant $\mathcal{D}$-module $\underline{\mathbb{Z}}$. The left Kan extension $\text{LK}_{\varphi}$ sends a resolution of type $FP$ of the constant $\mathcal{C}$-module $\underline{\mathbb{Z}}$ to a resolution of type $FP$ of the constant $\mathcal{D}$-module $\underline{\mathbb{Z}}$.

Let F be a $\mathcal{D}$-module. The adjunction isomorphism
$$\Hom_{\modcat{\mathcal{D}}}(\text{LK}_\varphi \mathcal{B}_{*}^{\mathcal{C}},F)\cong \Hom_{\cmod}(\mathcal{B}_{*}^{\mathcal{C}}, \text{Res}_{\varphi}F)$$
induces by taking cohomology the isomorphism
$$\operatorname{Ext}_{\modcat{\mathcal{D}}}^*(\underline{\mathbb{Z}},F)\cong \operatorname{Ext}_{\modcat{\mathcal{C}}}^{*}(\underline{\mathbb{Z}},\text{Res}_{\varphi}F).$$
For $F=P_d^{\mathcal{D}}$ we obtain the isomorphism $$\operatorname{Ext}_{\modcat{\mathcal{D}}}^*(\underline{\mathbb{Z}},P_d^{\mathcal{D}})\cong \operatorname{Ext}_{\modcat{\mathcal{C}}}^{*}(\underline{\mathbb{Z}},\text{Res}_{\varphi}P_d^{\mathcal{D}}).$$
Suppose $\mathcal{C}$ is a duality category then $$\operatorname{Ext}_{\modcat{\mathcal{D}}}^i(\underline{\mathbb{Z}},P_d^{\mathcal{D}})\cong \operatorname{Tor}^{\cmod}_{n-i}(D,\text{Res}_{\varphi}P_d^{\mathcal{D}}),$$
where $D$ is the dualizing $\mathcal{C}$-module of $\mathcal{C}$ and $n$ the projective dimension of $\underline{\mathbb{Z}}$.
As a consequence of properties \ref{LK}, we obtain the following isomorphisms 
\begin{align*}
\operatorname{Tor}^{\cmod}_{n-i}(D,\text{Res}_{\varphi}P_d)  &\cong \mathbb{L}_{n-i}\text{LK}_{\varphi^{op}}(D)(d)\\
&\cong \operatorname{Tor}^{\modcat{\varphi^{op}|d}}_{n-i}(\underline{\mathbb Z}, \text{Res}_jD)\\
&\cong \operatorname{Tor}^{\modcat{d|\varphi}}_{n-i}(\text{Res}_jD,\underline{\mathbb Z}).
\end{align*}

We summarize the above discussion in the following proposition
\begin{proposition}
Let $\varphi:\mathcal{C} \rightarrow \mathcal{D}$ a functor between two small categories. Assume that $\mathcal{C}$ is a duality category with dualizing functor $D$ with projective dimension $n$ and that the fibers $\varphi |-$ are contractible.
If $\operatorname{Tor}^{\modcat{d|\varphi}}_{n-i}(\text{Res}_jD,\underline{\mathbb Z})=0$ for $i\neq n$ then $\mathcal{D}$ is a duality category.
\end{proposition}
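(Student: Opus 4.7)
The strategy is to verify condition $(2)$ of theorem~\ref{dualite} for the constant $\mathcal{D}$-module $\underline{\mathbb{Z}}$, i.e.\ that $\underline{\mathbb{Z}}$ is of type $FP$ in $\modcat{\mathcal{D}}$ and that $\operatorname{Ext}^i_{\modcat{\mathcal{D}}}(\underline{\mathbb{Z}},P_d^{\mathcal{D}})=0$ for $i\neq n$ and every $d\in\text{Ob}(\mathcal{D})$. The argument is essentially a reorganization of the isomorphisms already assembled in the paragraph preceding the statement.

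For the $FP$ claim, I would start from a bounded $FP$-resolution $\mathcal{P}_*$ of $\underline{\mathbb{Z}}$ in $\modcat{\mathcal{C}}$, which exists because $\mathcal{C}$ is a duality category. By property~\ref{LK}$(1)$ the complex $\text{LK}_\varphi \mathcal{P}_*$ is a bounded complex of finite-type projectives in $\modcat{\mathcal{D}}$. The contractibility of the fibers $\varphi|d$ forces $\mathbb{L}_i\text{LK}_\varphi(\underline{\mathbb{Z}})$ to vanish for $i>0$ and $\text{LK}_\varphi(\underline{\mathbb{Z}})\cong\underline{\mathbb{Z}}$, exactly the computation already carried out for the Bar resolution, so $\text{LK}_\varphi \mathcal{P}_*$ is an $FP$-resolution of $\underline{\mathbb{Z}}$ over $\mathcal{D}$.

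For the vanishing, the adjunction between $\text{LK}_\varphi$ and $\text{Res}_\varphi$ applied to the projective resolution $\text{LK}_\varphi\mathcal{B}_*^{\mathcal{C}}$ of $\underline{\mathbb{Z}}$ in $\modcat{\mathcal{D}}$ gives
$$\operatorname{Ext}^i_{\modcat{\mathcal{D}}}(\underline{\mathbb{Z}},P_d^{\mathcal{D}})\cong \operatorname{Ext}^i_{\modcat{\mathcal{C}}}(\underline{\mathbb{Z}},\text{Res}_\varphi P_d^{\mathcal{D}}).$$
Since $\mathcal{C}$ is a duality category with dualizing functor $D$ of projective dimension $n$, theorem~\ref{dualite} identifies the right-hand side with $\operatorname{Tor}^{\cmod}_{n-i}(D,\text{Res}_\varphi P_d^{\mathcal{D}})$, and properties~\ref{LK}$(2)$--$(4)$ rewrite this as $\mathbb{L}_{n-i}\text{LK}_{\varphi^{op}}(D)(d)\cong \operatorname{Tor}^{\modcat{d|\varphi}}_{n-i}(\text{Res}_j D,\underline{\mathbb{Z}})$, which vanishes for $i\neq n$ by hypothesis. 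Theorem~\ref{dualite} then concludes that $\underline{\mathbb{Z}}$ is a duality functor in $\modcat{\mathcal{D}}$, so $\mathcal{D}$ is a duality category with dualizing functor $d\mapsto \operatorname{Tor}^{\modcat{d|\varphi}}_{0}(\text{Res}_j D,\underline{\mathbb{Z}})$. The only non-obvious step is the $FP$ claim for $\underline{\mathbb{Z}}$ over $\mathcal{D}$, and even there the content is exactly the contractibility of the fibers recorded through the vanishing of the higher $\mathbb{L}_i\text{LK}_\varphi(\underline{\mathbb{Z}})$; the rest is a straightforward chain of adjunctions and base changes.
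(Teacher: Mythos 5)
Your proposal is correct and takes essentially the same route as the paper, whose own ``proof'' is precisely the discussion preceding the proposition: the adjunction isomorphism $\operatorname{Ext}^*_{\modcat{\mathcal{D}}}(\underline{\mathbb{Z}},P_d^{\mathcal{D}})\cong\operatorname{Ext}^*_{\cmod}(\underline{\mathbb{Z}},\text{Res}_{\varphi}P_d^{\mathcal{D}})$ obtained from the resolution $\text{LK}_\varphi\mathcal{B}_*^{\mathcal{C}}$, the duality isomorphism over $\mathcal{C}$, and properties~\ref{LK} feeding into condition $(2)$ of theorem~\ref{dualite}. Your treatment of the $FP$ transfer, via the vanishing of $\mathbb{L}_i\text{LK}_\varphi(\underline{\mathbb{Z}})$ for $i>0$ and independence of the resolution, is in fact slightly more explicit than the paper's bare assertion that $\text{LK}_\varphi$ sends an $FP$ resolution of $\underline{\mathbb{Z}}$ over $\mathcal{C}$ to one over $\mathcal{D}$.
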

\subsubsection{Application}
Let  $\widetilde{\pi}_1(\mathcal{C})$ the groupoid obtained from $\mathcal{C}$ by inverting all morphisms.
The category $\widetilde{\pi}_1(\mathcal{C})$ is equivalent to the automorphism group of one of its objects which we denote  $\pi_1(\mathcal{C})$. 
This  group is isomorphic to the fundamental group of $|\mathcal{C}|$ (see \cite{Quillen}). 
Suppose that $|\mathcal{C}|$ is  $\text{K}(\pi_1(\mathcal{C});1)$ then the canonical functor  $\pi:\mathcal{C} \rightarrow  \pi_1(\mathcal{C})$ 
(we consider $\pi_1(\mathcal{C})$ as a category with one object $\bullet$) 
is an homotopy equivalence and the fibers $\pi|\bullet$ and $\bullet|\pi$ are contractible. 
Moreover, suppose that $\mathcal{C}$ is the simplicial poset associated to a triangulation of a cohomological compact orientable manifold then the dualizing functor $D$ is the constant $\mathcal{C}$-module $\underline{\mathbb{Z}}$ (proposition \ref{constant}). 
In this case, we have $\operatorname{Tor}^{\bullet|\pi}_{n-i}(D,\underline{\mathbb Z})=0$ for $i\neq n$ and equal to $\mathbb{Z}$ for $i=n$. 
It follows that $\pi_1(\mathcal{C})$ is a Poincar\'e duality group (a special case of \protect{\cite[example 1 page 222]{Brown}}).
%

%
\end{document}